\patchcmd{\ttlh@hang}{\parindent\z@}{\parindent\z@\leavevmode}{}{}
\patchcmd{\ttlh@hang}{\noindent}{}{}{}
\newcommand\numberthis{\addtocounter{equation}{1}\tag{\theequation}}
\newtheorem{theorem}{Theorem}[section]
\newtheorem{lemma}[theorem]{Lemma}
\newtheorem{proposition}[theorem]{Proposition}
\newtheorem{corollary}[theorem]{Corollary}
\theoremstyle{definition}
\newtheorem{definition}[theorem]{Definition}
\theoremstyle{remark}
\newtheorem{remark}[theorem]{Remark}
\newcommand{\SImodZ}{SI/Z}
\newcommand{\Hpi}{\mathcal{H}_{\pi}}
\newcommand{\Hsmooth}{\Hpi^{\infty}}
\newcommand{\Hone}{\mathcal{H}^1_{\pi}}
\newcommand{\Honedual}{(\Hone)^{\urcorner}}
\newcommand{\Api}{\mathcal{A}_{\pi}}
\newcommand{\Bpi}{\mathcal{B}_{\pi}}
\newcommand{\xdot}{\dot{x}}
\newcommand{\Co}{Co}
\newcommand{\CoLp}{\Co(L^p(G))}
\newcommand{\CoLone}{\Co(L^1(G))}
\newcommand{\CoLinfty}{\Co(L^{\infty} (G))}
\newcommand{\Conull}{\Co (C_0 (G))}
\newcommand{\WLst}{W^{\st}(L^{\infty}, L^1)}
\newcommand{\WRst}{W^{\st}(L^{\infty}, L^1)}
\newcommand{\WRstw}{W^{\st}(L^{\infty}, L^1_{w_{\alpha}})}
\newcommand{\WL}{W(L^{\infty}, L^1)}
\newcommand{\WR}{W_R(L^{\infty}, L^1)}
\newcommand{\WLC}{W(C_0, L^1)}
\newcommand{\WRC}{W_R(C_0, L^1)}
\newcommand{\WLM}{W(M, L^{\infty})}
\newcommand{\MG}{M(G)}
\newcommand{\st}{st}
\newcommand{\CS}{\pi(\Lambda) g}
\newcommand{\psike}{\psi_k^{\varepsilon}}
\newcommand{\psije}{\psi_j^{\varepsilon}}
\newcommand{\schur}{Schur}
\DeclareMathOperator*{\ran}{ran}
\DeclareMathOperator*{\trace}{trace}
\DeclareMathOperator*{\loc}{loc}
\DeclareMathOperator*{\sep}{sep}
\DeclareMathOperator*{\rel}{rel}
\DeclareMathOperator*{\spann}{span}
\DeclareMathOperator*{\supp}{supp}
\DeclareMathOperator*{\esssup}{ess\,sup}
\DeclareMathOperator*{\id}{id}
\DeclareSymbolFont{cyrletters}{OT2}{wncyr}{m}{n}
\DeclareMathSymbol{\Sha}{\mathalpha}{cyrletters}{"58}
\title{Balian-Low type theorems on homogeneous groups}
\author[K. Gr\"{o}chenig]{Karlheinz Gr\"{o}chenig}
\address{Faculty of Mathematics, 
University of Vienna, 
Oskar-Morgenstern-Platz 1, 
A-1090 Vienna, Austria}
\email{karlheinz.groechenig@univie.ac.at}
\author[J.L. Romero]{Jos\'e Luis Romero}
\address{Faculty of Mathematics,
University of Vienna,
Oskar-Morgenstern-Platz 1,
A-1090 Vienna, Austria\\and
Acoustics Research Institute, Austrian Academy of Sciences,
Wohllebengasse 12-14 A-1040, Vienna, Austria}
\email{jose.luis.romero@univie.ac.at, jlromero@kfs.oeaw.ac.at}
\author[D. Rottensteiner]{David Rottensteiner}
\address{Department of Mathematics: Analysis, Logic and Discrete Mathematics,
Ghent University, Belgium}
\email{david.rottensteiner@ugent.be}
\author[J.T. van Velthoven]{Jordy Timo van Velthoven}
\address{Faculty of Mathematics, 
University of Vienna, 
Oskar-Morgenstern-Platz 1, 
A-1090 Vienna, Austria}
\email{jordy-timo.van-velthoven@univie.ac.at}
\subjclass[2010]{22E25, 22E27,42C15, 42C40}
\date{}
\keywords{Balian-Low type theorem, deformation theory, homogeneous group, 
localized frame, off-diagonal decay, spectral invariance, strict density condition}
 \thanks{K.\ G.\ was
   supported in part by the  project P31887-N32  of the
 Austrian Science Fund (FWF). 
 D.\ R. was supported by the Austrian Science Fund (FWF) project I 3403.
 J.~L.~R.~gratefully acknowledges support from the Austrian Science Fund (FWF): P 29462 - N35,
from the WWTF grant INSIGHT (MA16-053).
J.~v.~V.~acknowledges support from the Austrian Science Fund (FWF): P 29462 - N35.}
\begin{document}

\maketitle

\begin{abstract}
We prove strict necessary density conditions 
for coherent frames and Riesz sequences on homogeneous
groups. Let $N$ be a connected, simply connected nilpotent Lie group 
with a dilation structure (a homogeneous
group) and let $(\pi, \Hpi)$ be an irreducible, square-integrable representation modulo the
center $Z(N)$ of $N$ on a Hilbert space $\Hpi$ of formal dimension
$d_\pi $. If $g \in \Hpi$ is an integrable vector and the  set $\{ \pi
(\lambda )g : \lambda \in \Lambda \}$ for a discrete subset $\Lambda
\subseteq N / Z(N)$ forms a frame for $\Hpi$, then its density
satisfies the strict inequality  $D^-(\Lambda )> d_\pi $, where
$D^-(\Lambda )$ is the  lower Beurling
density. An analogous density condition $D^+(\Lambda) < d_{\pi}$ holds for a
Riesz sequence in $\Hpi$ contained in the orbit of $(\pi, \Hpi)$.
The proof is based on a deformation theorem for coherent systems, a
universality result for $p$-frames and $p$-Riesz sequences, some results
from Banach space theory, and tools from the analysis on homogeneous groups.  
\end{abstract}

\section{Introduction}
Let $G$ be a compactly generated, locally compact group of polynomial growth, 
and let $(\pi, \Hpi)$ be an irreducible, square-integrable representation of $G$ 
on a Hilbert space $\Hpi$ of formal dimension $d_{\pi}$. 
We consider the spanning properties of discrete systems in the orbit of 
a vector $g \in \Hpi$ under $(\pi, \Hpi)$, 
\begin{align} \label{eq:intro_coherent}
\CS = \big\{\pi(\lambda)g: \lambda \in \Lambda \big\}, 
\end{align}
and study  their relation with the density of the index set $\Lambda \subseteq G$.
The existence of frames and Riesz sequences of the form \eqref{eq:intro_coherent} 
for ``sufficiently dense'' respectively ``sufficiently sparse'' 
index sets is well-known
\cite{feichtinger1989banach1, fuehr2007sampling, groechenig1991describing}.
Necessary density conditions follow from the abstract theory of
localized frames \cite{MR2224392, MR2235170, MR1325536}, 
or sampling in reproducing kernel Hilbert spaces
\cite{ fuehr2017density, mitkovski1}. 
For example, \cite[Theorem 5.3]{fuehr2017density}
asserts that:
\begin{align}
&\text{If $\CS$ forms a frame for $\Hpi$, then $D^- (\Lambda) \geq d_{\pi}$.} \numberthis \label{intro_frame}
\\
&\text{If $\CS$ forms a Riesz sequence in $\Hpi$, then $D^+ (\Lambda) \leq d_{\pi}$.} 
\numberthis \label{intro_riesz}
\end{align}
The quantities $D^- (\Lambda)$ and $D^+ (\Lambda)$ denote certain
lower respectively upper Beurling densities of $\Lambda$; see
\cite[Section 5.3]{fuehr2017density} for the precise details.

In this paper we consider the question of the strictness of the density conditions \eqref{intro_frame} and \eqref{intro_riesz}.  
This problem has been studied extensively in the setting of the Heisenberg group 
and is generally known as the Balian-Low theorem in Gabor theory. 
Precisely, the Balian-Low theorem asserts that if $g \in L^2 (\mathbb{R})$ is a phase-space localized function, 
then the family of functions $\{ e^{2\pi i l \cdot } g(\cdot + k) \; : \; k, l \in \mathbb{Z} \}$ 
does not form a Riesz basis or a frame for $L^2 (\mathbb{R})$; see \cite{daubechies90, battle1988heisenberg}. 
In view of recent constructions of orthonormal bases in the orbit of unitary representations of 
nilpotent Lie groups \cite{MR3864505} 
and solvable semi-direct products \cite{oussa2, MR3743195}, 
it is of interest whether a Balian-Low type theorem holds 
for other groups than the Heisenberg group. 
In this regard we recall the  famous Kirillov lemma  asserting that any nilpotent Lie group 
admits a subgroup isomorphic to the Heisenberg group. 
 Hence it is expected that a Balian-Low type theorem holds 
for general nilpotent Lie groups.

While the Balian-Low theorem is usually studied as a no-go result, it
also has a  fruitful interpretation  as a strict necessary density condition. The index set
of the system $\{ e^{2\pi i l \cdot } g(\cdot + k) \; : \; k, l \in \mathbb{Z}
\}$ is $\mathbb{Z}^2$ and  possesses the critical density $1$. The Balian-Low
theorem now states that the index set of a frame consisting of time-frequency shifts
with a phase-space localized generator must necessarily have super-critical
density.

Our goal is to prove that for coherent systems of the form
\eqref{eq:intro_coherent} with an integrable vector $g \in \Hpi$
 the inequalities in
\eqref{intro_frame} and \eqref{intro_riesz}   must be strict. 
One successful approach to derive such strict density conditions is to
study the deformations of frames and Riesz sequences first and then proceed by
contradiction. Suppose that a coherent
system~\eqref{eq:intro_coherent}  is a frame, and that
$\Lambda \subseteq G$ attains the critical density. The challenge is
to produce a suitable  \emph{deformation} of $\Lambda$ that still
yields a frame, but has  smaller density, thus contradicting the
non-strict density conditions \eqref{intro_frame}. 
A similar argument can be used to contradict \eqref{intro_riesz}. 
This line of argument goes back to Beurling \cite{MR1057613}, and 
variations of this program have been implemented several times, 
e.g., in complex analysis in
\cite{MR1655834} and \cite{groechenig2018strict} and in Gabor analysis
in \cite{MR2031050,MR3192621} and \cite{groechenig2015deformation}.

In the setting of a non-Abelian group, we first  need to  find  an
adequate deformation. Here  the theory of Lie groups offers a natural
setting with  an obvious choice: these are groups  endowed with a
family of dilations compatible with the group structure, namely the
so-called homogeneous nilpotent Lie groups. Although these groups are
non-Abelian  and possess a rich representation theory that is rather
different from $\mathbb{R}^d$, their geometry and measure theory are 
quite similar in the sense that they form a space of homogeneous type 
and many real-variable methods carry over to homogeneous
groups~\cite{folland1982hardy}. These structural similarities allow us to prove
the following Balian-Low type theorem, phrased in terms of the (homogeneous)
lower and upper Beurling densities.  

\begin{theorem} \label{thm:balianlow_homogeneous}
Let $N$ be a homogeneous Lie group and let $(\pi, \Hpi)$ be an 
irreducible representation of $N$ that is square-integrable modulo its center $Z(N)$. 
Let $\Lambda \subset N/Z(N)$ be a discrete subset. 
Let $g \in \Hpi$ be such that $\int_{N/Z(N)} | \langle g, \pi(x) g \rangle | \; d\mu_{N / Z(N)} (\xdot) < \infty$. 
\begin{enumerate}[(i)]
\item If $\CS$ forms a frame for $\Hpi$, then $D^- (\Lambda) > d_{\pi}$.
\item If $\CS$ forms a Riesz sequence in $\Hpi$, then $D^+ (\Lambda) < d_{\pi}$. 
\end{enumerate}
\end{theorem}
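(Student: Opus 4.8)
The plan is to argue by contradiction and to exploit the one feature that distinguishes homogeneous groups among all nilpotent Lie groups, namely the dilations $\delta_r$. Since the center $Z(N)$ is a graded, hence $\delta_r$-invariant, subgroup, the dilations descend to automorphisms of the quotient $N/Z(N)$ and scale its Haar measure by the homogeneous dimension $Q$ of $N/Z(N)$. Thus for a homogeneous ball $B_R$ one has $\lvert B_R\rvert=R^Q\lvert B_1\rvert$, while $\delta_{1/r}(xB_R)=\delta_{1/r}(x)\,B_{R/r}$ because $\delta_{1/r}$ is a group automorphism scaling the homogeneous norm by $1/r$. A direct count of points in translated balls then shows that dilation rescales both Beurling densities by the same factor,
\[
D^{\pm}(\delta_r\Lambda)=r^{-Q}\,D^{\pm}(\Lambda),\qquad r>0 .
\]
This is the mechanism by which an apparently mild deformation will move the density across the critical value $d_\pi$.

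For part (i), suppose $\CS$ forms a frame. The nonstrict bound \eqref{intro_frame} gives $D^-(\Lambda)\ge d_\pi$, so it suffices to rule out equality; assume therefore that $D^-(\Lambda)=d_\pi$. The decisive input is a deformation theorem for coherent systems: because $g$ is an integrable vector, the system $\CS$ is self-localized (its Gram matrix inherits off-diagonal decay from the integrability of $\xdot\mapsto\langle g,\pi(x)g\rangle$), and one shows that the frame property, with uniform frame bounds, persists under the deformation $\Lambda\mapsto\delta_r\Lambda$ for all $r$ sufficiently close to $1$. Granting this, fix such an $r>1$. Then $\big\{\pi(\mu)g:\mu\in\delta_r\Lambda\big\}$ is again a frame for $\Hpi$, so \eqref{intro_frame} forces $D^-(\delta_r\Lambda)\ge d_\pi$; but the scaling formula gives $D^-(\delta_r\Lambda)=r^{-Q}d_\pi<d_\pi$, a contradiction. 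Hence $D^-(\Lambda)>d_\pi$. Part (ii) is entirely dual: if $\CS$ is a Riesz sequence then \eqref{intro_riesz} gives $D^+(\Lambda)\le d_\pi$, and assuming equality the Riesz-sequence form of the deformation theorem keeps $\big\{\pi(\mu)g:\mu\in\delta_r\Lambda\big\}$ a Riesz sequence for $r$ close to $1$; choosing $r<1$ yields $D^+(\delta_r\Lambda)=r^{-Q}d_\pi>d_\pi$, contradicting \eqref{intro_riesz}.

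Everything thus reduces to the deformation theorem, and this is where the remaining ingredients enter and where I expect the main obstacle to lie. The subtlety is that $\delta_r$ is \emph{not} a bounded perturbation: a point at homogeneous distance $R$ from the identity is displaced by an amount of order $(r-1)R$, so the classical jitter-type stability results do not apply and the frame bounds must be controlled globally. To obtain this uniform control I would first pass, via the universality result for $p$-frames and $p$-Riesz sequences, from the Hilbert-space statement to one valid on the whole scale of associated coorbit spaces $\mathrm{Co}(L^p)$: an integrable vector makes $\CS$ a localized frame, and universality upgrades the $L^2$-frame property to a simultaneous $p$-frame property that is stable under the deformations at hand. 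The Banach-space input then closes the loop: localization estimates should show that the analysis and synthesis operators of the deformed systems converge to those of $\CS$ in operator norm on these spaces as $r\to1$, so that invertibility of the frame operator is an open condition and the frame bounds depend continuously on $r$.

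The hard part is precisely establishing this operator-norm continuity uniformly along the \emph{unbounded} dilational deformation: one must prove that the off-diagonal decay inherited from the integrability of $g$ survives, with constants uniform in $r$, the rescaling $\Lambda\mapsto\delta_r\Lambda$ and the accompanying distortion of the homogeneous geometry. It is exactly this uniform survival of localization that forbids the frame (respectively the Riesz sequence) from degenerating at the critical density, and hence yields the strict inequalities of Theorem~\ref{thm:balianlow_homogeneous}.
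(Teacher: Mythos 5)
Your overall skeleton coincides with the paper's: argue by contradiction at the critical density, use $D^{\pm}(D_r(\Lambda)) = r^{-Q} D^{\pm}(\Lambda)$, invoke a dilation--stability theorem, and contradict the non-strict conditions of Theorem \ref{thm:nonstrict_density}. However, both technical pillars, as you set them up, have genuine gaps.

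First, you feed the integrable vector $g$ directly into the deformation machinery, asserting that integrability of $V_g g$ alone makes $\CS$ ``self-localized.'' It does not. The localization apparatus behind universality (Theorem \ref{thm:frame_Riesz_universal}) and dilation stability (Theorems \ref{thm:frame_stability} and \ref{thm:riesz_stability}) requires Gramian envelopes in the \emph{weighted strong amalgam space} $\WRstw(G)$ with $\alpha \geq Q$ (Proposition \ref{prop:sjostrand-wienertype}), respectively $\alpha \geq Q+1$ (Theorem \ref{thm:wiener-type-extension}); a bare $L^1$ bound on $V_g g$ provides neither the local sup-control of amalgam membership nor any polynomial weight. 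This is precisely why the stability theorems of Section 3 assume $g \in \Hsmooth$, and why the paper's proof of Theorem \ref{thm:balianlow_homogeneous} does \emph{not} deform $\CS$ itself: it first invokes Proposition \ref{prop:perturbation}, which approximates $g$ in the $\Co(L^1(G))$-norm by a smooth vector $\widetilde{g}$ and shows, via the estimate $\| (C_{g,\Lambda} - C_{\widetilde{g},\Lambda}) f \|_{\ell^2} \lesssim \| V_h (g - \widetilde{g}) \|_{L^1} \, \| f \|_{\Hpi}$, that $\pi(\Lambda)\widetilde{g}$ is still a frame (resp.\ Riesz sequence). Without this perturbation step your argument never gains access to the deformation theory under the stated hypothesis on $g$.

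Second, your proposed route to the deformation theorem --- operator-norm convergence of the analysis/synthesis operators of $\pi(D_r(\Lambda))g$ to those of $\CS$ as $r \to 1$ --- is not merely the hard part; it is false, even for smooth $g$. Identify $\ell^2(D_r(\Lambda))$ with $\ell^2(\Lambda)$ via $\lambda \leftrightarrow D_r(\lambda)$, fix $r > 1$, and test the difference of analysis operators on $f_{\lambda} := \pi(D_r(\lambda))g$: its $\lambda$-th entry is $\| g \|_{\Hpi}^2 - \langle \pi(D_r(\lambda))g, \pi(\lambda) g \rangle$. Since $|\lambda^{-1} D_r(\lambda)|_G \geq (r-1)|\lambda|_G \to \infty$ and $V_g g \in \mathcal{S}(G) \subset C_0(G)$, this entry tends to $\| g \|_{\Hpi}^2$ as $|\lambda|_G \to \infty$, so $\| C_{g, D_r(\Lambda)} - C_{g,\Lambda} \|_{op} \geq \| g \|_{\Hpi} - o(1)$ for every fixed $r \neq 1$: the deformed system is never globally close to the original one, exactly because far-away points are displaced by $\sim (r-1)|\lambda|_G$. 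Consequently no Neumann-series/openness-of-invertibility argument can work. The paper's proof replaces this with Beurling's weak-limit method: assuming $\pi(D_{r_n}(\Lambda))g$ fails to be a frame for all $n$, one recenters by translations, extracts a weak limit $x_n^{-1} D_{r_n}(\Lambda) \xrightarrow{w} \Gamma \in W(\Lambda)$ (Lemma \ref{lem:dilation_subsequence}), and produces a nonzero $h \in \CoLinfty$ with $V_g h \equiv 0$ on $\Gamma$, contradicting the $\infty$-frame property of $\CS$ supplied by Theorem \ref{thm:frame_Riesz_universal}; here only locally uniform convergence on compacta is used, never operator-norm convergence. That substitution of compactness for continuity is the essential idea your proposal is missing.
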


\begin{corollary}
  If $\{\pi (\lambda ) g: \lambda \in \Lambda \}$ is an orthonormal
  basis or a Riesz basis for $\Hpi$, 
  then $\int_{N / Z(N)} | \langle g, \pi(x) g \rangle | \; d\mu_{N/Z(N)} (\xdot) = \infty$; 
  in particular, $g \in \Hpi$ 
   cannot be a smooth vector of $(\pi, \Hpi)$. 
\end{corollary}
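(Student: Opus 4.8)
The plan is to derive the corollary from Theorem~\ref{thm:balianlow_homogeneous} by exploiting the fact that an orthonormal basis, and more generally a Riesz basis, is \emph{simultaneously} a frame for $\Hpi$ and a Riesz sequence in $\Hpi$. I argue by contraposition: suppose that $g \in \Hpi$ satisfies the integrability condition $\int_{N/Z(N)} | \langle g, \pi(x) g \rangle | \, d\mu_{N/Z(N)}(\xdot) < \infty$, and that $\CS$ is a Riesz basis for $\Hpi$. Since a Riesz basis is in particular a frame, part~(i) of Theorem~\ref{thm:balianlow_homogeneous} yields $D^-(\Lambda) > d_\pi$; since it is also a Riesz sequence, part~(ii) yields $D^+(\Lambda) < d_\pi$.

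The next step is to combine these two strict inequalities. By the very definitions of the lower and upper Beurling densities one always has $D^-(\Lambda) \leq D^+(\Lambda)$, so the two conclusions force
\[
d_\pi < D^-(\Lambda) \leq D^+(\Lambda) < d_\pi,
\]
which is absurd. Hence the integrability assumption cannot hold, and any generator $g$ of a Riesz basis (in particular of an orthonormal basis) must satisfy $\int_{N/Z(N)} | \langle g, \pi(x) g \rangle | \, d\mu_{N/Z(N)}(\xdot) = \infty$.

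For the final assertion, I would invoke the decay properties of matrix coefficients of smooth vectors. If $g$ were a smooth vector of $(\pi, \Hpi)$, then the matrix coefficient $\xdot \mapsto \langle g, \pi(x) g \rangle$ on $N/Z(N)$ inherits rapid, Schwartz-type decay from the smoothness of $g$, as is standard in the harmonic analysis of square-integrable representations of homogeneous groups. In particular such a coefficient is integrable against $\mu_{N/Z(N)}$, contradicting the divergence of the integral just established. Therefore no smooth vector can generate an orthonormal or Riesz basis.

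The main obstacle is this last step: one must make precise the claim that smoothness of $g$ entails integrability of its matrix coefficient on the quotient $N/Z(N)$, which rests on the Schwartz-space mapping properties of coefficient functions of $(\pi,\Hpi)$ together with the homogeneous structure of the measure $\mu_{N/Z(N)}$. The two density inequalities themselves, and their incompatibility via $D^-(\Lambda) \leq D^+(\Lambda)$, are immediate once Theorem~\ref{thm:balianlow_homogeneous} is in hand.
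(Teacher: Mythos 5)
Your proposal is correct and is exactly the argument the paper intends (it states the corollary without proof): a Riesz basis is simultaneously a frame and a Riesz sequence, so both parts of Theorem~\ref{thm:balianlow_homogeneous} apply and the strict inequalities $d_\pi < D^-(\Lambda) \leq D^+(\Lambda) < d_\pi$ are contradictory under the integrability hypothesis. The final step you flag as the ``main obstacle'' is already recorded in Section~2.3 of the paper: for smooth vectors the matrix coefficient $\langle g, \overline{\pi}(\cdot) g\rangle$ lies in the Schwartz space $\mathcal{S}(N/Z(N))$ (citing Corwin--Greenleaf), hence is integrable, so no further work is needed.
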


For the proof of Theorem \ref{thm:balianlow_homogeneous}, 
we will revisit the theory of deformations \cite{groechenig2015deformation} 
and localizable reproducing kernel Hilbert spaces \cite{groechenig2018strict} and 
follow their outline. 
Once it is
understood how to move from $\mathbb{R}^d$ to a general homogeneous group, many
arguments carry over  almost word-by-word. However, several important  steps require
technical modifications. In particular,  we
will prove the existence of coherent frames whose canonical dual frame consists
of a set of molecules in the sense of~\cite{groechenig2009molecules}. 
Although this is  only an  auxilliary
result, our Proposition~\ref{prop:canonical_dual} may be of  independent
interest as it  enriches our knowledge of abstract coorbit theory. 

From a larger perspective one might aim at more  abstract Balian-Low type
theorems for certain classes of  frames. This program leads to several
interesting problems in frame theory that are still open. According to
our current understanding at least the following
ingredients are required:

(i) One needs an appropriate deformation theory for frames. As argued
in~\cite{groechenig2015deformation}, one must go beyond local jitter
errors and  must consider global deformations.

(ii) One needs a class of suitable deformations that change the
density of the frame. For simplicity here we 
 treat only dilations. We
remark, however, that the more refined nonlinear deformation theory
developed in \cite{groechenig2015deformation} can be proved for other types
of Lie groups  and is applicable in other
contexts, including the evolution of coherent systems through
Hamiltonian flows \cite{MR3500423}.
An interesting  open problem is to find an appropriate  notion of  deformation for
general nilpotent groups or groups of polynomial growth. 

(iii) One needs some off-diagonal decay of the Gramian matrix of the
frame. In the context of Theorem~\ref{thm:balianlow_homogeneous} this
is expressed by the condition that $g \in \Hpi $ is an integrable vector. 
The off-diagonal decay paves the way for
the application of Banach algebra techniques, in particular statements
about spectral invariance and the off-diagonal decay of the inverse
matrix. See~\cite{Gr10} for a survey.  Our version for homogeneous groups is contained in
Propositions~\ref{prop:sjostrand-wienertype} and
\ref{thm:wiener-type-extension} with  almost the same proof as
Sj\"ostrand's~\cite{sjostrand1995wiener}. As a consequence our results also hold for $p$-frames and the frame property is then independent of $p$. The spectral invariance is
usually hidden in the technical part, but in our opinion it is at the
heart of strict density conditions. This view is supported by the fact
that the  Paley-Wiener space admits an orthonormal basis at the
critical density (this is the Shannon-Whittaker-Kotelnikov sampling
theorem) when at the same time its kernel lacks sufficient
decay. Likewise there are orthonormal Gabor bases, but their generating
function is not phase-space localized.\footnote{We note
  that this picture cannot be completely accurate, as there are
  shift-invariant spaces with exponentially decaying generator that
  admit a frame of reproducing kernels at the critical density.}

(iv) Last but not least, one needs some translation structure so that the
underlying configuration space looks the same everywhere. This is
evident in the context of groups, but far from obvious in other
examples. In fact,  the construction of suitable
translation operators in general Fock spaces was one of the major
innovations in  \cite{MR1655834} for the proof of strict density
conditions.

The paper is organized as follows: In Section~2 we collect the
required facts about homogeneous groups and their representations and
the background on coorbit spaces. In Section~3 we study the stability
of coherent frames and Riesz sequences under dilations. The main theorems are
then stated and proved in Section~4. The appendix offers the necessary
tools about the stability and off-diagonal decay of matrices indexed
by discrete subsets of a homogeneous group. We prove a new result
about the canonical dual frame of a coherent frame. 

\section{Coorbit space theory} \label{sec:preliminaries}

\subsection{Homogenous groups}
This section consists of preliminary results on homogeneous groups 
and sets up the notation used throughout the paper. 
Standard references on homogeneous groups are 
\cite{folland1982hardy, fischer2016quantization}. 

 A family of \emph{dilations} $\{D^{\mathfrak{g}}_r \}_{r>0}$ of a Lie algebra $\mathfrak{g}$ 
is a family of Lie algebra automorphisms 
$D^{\mathfrak{g}}_r : \mathfrak{g} \to \mathfrak{g}$  
of the form
$
D^{\mathfrak{g}}_r = \exp_{\mathrm{GL}(\mathfrak{g})} \big(A \ln (r) \big)
$
for some diagonalizable linear operator $A : \mathfrak{g} \to \mathfrak{g}$
with positive eigenvalues, called the \emph{dilations' weights}. 
 A connected, simply connected Lie group $G$ is called \emph{homogeneous} if
 its Lie algebra $\mathfrak{g}$ is equipped with a family of dilations. 
 The \emph{homogeneous dimension} of $G$ is the number $Q := \trace(A)$. 
Throughout this paper, it will be assumed, without loss of generality, 
that the dilations' lowest weight equals $1$.  We have $Q \geq \dim(G)$. 

A Lie algebra admitting a family of dilations is nilpotent, and hence so is its associated
connected, simply connected Lie group. 
The converse does not hold, i.e., not every nilpotent Lie group is homogeneous \cite{dyer1970nilpotent},
although they exhaust a large class \cite{johnson1975homogeneous}. 

Any dilation $D^{\mathfrak{g}}_r$ on $\mathfrak{g}$ 
induces a continuous group automorphism $D_r : G \to G$
defined by
\[ D_r := \exp \circ D_r^{\mathfrak{g}} \circ \exp^{-1}, \]
where $\exp : \mathfrak{g} \to G$ is the exponential map. 

A \emph{homogeneous norm} on $G$ is a continuous mapping $| \cdot |_G : G \to [0,\infty)$ 
satisfying
\begin{enumerate}[(i)]
\item $|x^{-1} |_G = |x|_G$ for all $x \in G$;
\item $|D_r (x) |_G = r | x |_G$ for all $x \in G$ and $r>0$;
\item $|xy|_G \leq |x|_G + |y|_G$ for all $x,y\in G$;
\item $|x|_G = 0$ if, and only if, $x = e$. 
\end{enumerate}

Every homogeneous group admits a homogeneous norm, and 
the mapping
\[ d_G : G \times G \to \mathbb{R}^+, \quad (x,y) \mapsto |x^{-1} y|_G \]
forms a left-invariant metric on $G$, the so-called \emph{homogeneous metric}. 

The open ball $B_R (x)$ in $G$ of radius $R>0$ and center $x \in G$ is defined as
$
B_R (x) :=  \{ y \in G \; : \; |x^{-1} y |_G < R \}. 
$
For any $x, y \in G$ and $R>0$, we have
$y B_R (x) = B_R (yx)$ and $B_R (e) = D_R (B_1 (e))$. 
The Haar measure $\mu_G$ on $G$ satisfies
$\mu_G (D_r (E)) = r^Q \mu_G (E)$ for every Borel measurable set $E \subseteq G$. 
Any ball $B_R (x)$ is relatively compact in $G$, and thus Borel measurable. 

In the sequel, we will repeatedly pass to the quotient $ G / Z(G)$
of $G$ and its center $Z = Z(G)$. 
The group $G / Z(G)$ is homogeneous itself.
For the precise details, see  Appendix \ref{sec:homogeneous_quotient}. 

\subsection{Discrete sets} 
A subset $\Lambda \subset G$ of a homogeneous group $G$ is called \emph{relatively separated} if
\[
\rel(\Lambda) := \sup_{x \in G} \# \big( \Lambda \cap B_1 (x) \big) < \infty
\]
and is called \emph{separated} if 
\[
\sep(\Lambda) := \inf_{\lambda \neq \lambda' \in \Lambda} |\lambda^{-1} \lambda'|_G > 0.
\]
A set $\Lambda \subset G$ is called \emph{relatively dense} if there exists an $R>0$ 
such that 
$
G = \bigcup_{\lambda \in \Lambda} B_R (\lambda).
$

A (uniform) \emph{lattice} $\Lambda \subset G$ is a discrete, co-compact subgroup of $G$.
By Malcev's theorem \cite[Theorem 5.1.8]{corwin1990representations}, a nilpotent Lie group $G$ admits a lattice 
only if its Lie algebra has a rational structure.
More generally, any nilpotent Lie group admits a so-called quasi-lattice \cite[Proposition 5.10]{fuehr2007sampling}. 
A set $\Lambda \subset G$ is called a \emph{quasi-lattice} in $G$ 
if there exists a relatively compact Borel set $\Omega \subset G$ 
such that 
$
G = \bigcup_{\lambda \in \Lambda} \lambda \Omega
$
with $\lambda \Omega \cap \lambda' \Omega = \emptyset$ for $\lambda \neq \lambda'$. 
The set $\Omega$ is called the \emph{complement} of $\Lambda$. 

The (homogeneous) \emph{lower} and \emph{upper Beurling density} 
of a discrete set $\Lambda \subset G$ are defined by
\[
D^- (\Lambda) := \liminf_{R \to \infty} \inf_{x \in G} \frac{\# \big( \Lambda \cap B_R (x) \big)}{\mu_G (B_R (e))}
\quad \text{and} \quad
D^+ (\Lambda) := \limsup_{R \to \infty} \sup_{x \in G} \frac{\# \big( \Lambda \cap B_R (x) \big)}{\mu_G (B_R (e))},
\]
respectively. 
For $r > 0$, 
we have  $D^- (D_r (\Lambda)) = r^{-Q} D^- (\Lambda)$ and $D^+ (D_r (\Lambda)) = r^{-Q} D^+ (\Lambda)$. 

A set $\Lambda \subset G$ is relatively separated if, and only if, $D^+ (\Lambda) < \infty$. For a quasi-lattice $\Lambda \subset G$ with complement $\Omega \subset G$, 
we have $D^+ (\Lambda) = D^- (\Lambda) = (\mu_G (\Omega))^{-1}$. 

\subsection{Projective and square-integrable representations} \label{sec:projective}
Let $N$ be a connected, simply connected nilpotent Lie group.
A unitary representation $(\pi, \Hpi)$ of $N$ 
is said to be \emph{square-integrable modulo its centre $Z(N)$}
if there exists a non-zero $g \in \Hpi$ such that
\begin{align} \label{eq:square_integrable}
\int_{N / Z(N)} |\langle g, \pi (x) g \rangle |^2 \; d\mu_{N/Z(N)} (\xdot) < \infty,
\end{align}
where $\xdot = x Z(N)$. If \eqref{eq:square_integrable} is satisfied for some $g \in \Hpi \setminus \{0\}$, 
then it is satisfied for all $g \in \Hpi$. 
We write $\pi \in \SImodZ$ if $(\pi, \Hpi)$ is irreducible and square-integrable modulo $Z(N)$. 
A $\pi \in \SImodZ$ is called a \emph{relative discrete series representation} \cite{moore1973square}. 

Given $\pi \in \SImodZ$, there exists a $d_{\pi} > 0$, 
called the \emph{formal degree} or \emph{formal dimension} of $\pi$, such that the \emph{orthogonality relations}
\begin{align} 
\int_{N/Z(N)} \langle \pi (\xdot) f_1, g_1 \rangle \overline{\langle \pi(\xdot) f_2, g_2 \rangle} \; d\mu_{N/Z(N)} (\xdot) 
= d_{\pi}^{-1} \langle f_1, f_2 \rangle \overline{\langle g_1, g_2 \rangle} \label{ORs}
\end{align}
hold for all $f_1, f_2, g_1, g_2 \in \Hpi$. 

In the sequel, we will often treat a representation $\pi \in \SImodZ$ 
as a projective representation of the quotient $N/Z(N)$. 
A (continuous) \emph{projective representation} $(\overline{\pi}, \Hpi)$ 
of a connected, simply connected nilpotent Lie group $G$
is a strongly continuous map $\overline{\pi} : G \to \mathcal{U} (\Hpi)$ 
satisfying $\overline{\pi} (e) = I$ and such that there exists a continuous 
$\sigma : G \times G \to \mathbb{T}$, called the \emph{cocycle}, 
satisfying $\overline{\pi} (xy) = \sigma(x,y) \overline{\pi} (x) \overline{\pi} (y)$ for $x,y \in G$.

Let $q : N \to N /Z(N)$ denote the quotient map and let
$s : N /Z(N) \to N$ be a continuous cross-section of $q$ such that $q \circ s = \id_{N/Z(N)}$. 
Then, given $\pi \in \SImodZ$, the map
\[
\overline{\pi} : N / Z(N) \to \mathcal{U}(\Hpi), \quad \xdot \mapsto \pi (s(\xdot))
\]
forms a projective representation $(\overline{\pi}, \Hpi)$ of $G := N / Z(N)$ whose representation 
coefficients satisfy
$\langle f, \overline{\pi} (\cdot) g\rangle \in L^2 (G)$ for all $f, g \in \Hpi$. 
A projective representation obtained in this manner is independent 
of the choice of the cross-section and will be referred to as 
a \emph{projective relative discrete series representation}. 
In the sequel, we will simply write $\pi$ for $\overline{\pi}$. 

A vector $g \in \Hpi$ is called a \emph{smooth vector} of a 
relative discrete series representation $(\pi, \Hpi)$ of $N$ if the vector-valued map 
$x \mapsto \pi (x) g$ from $N$ into $\Hpi$ is smooth. 
The space of smooth vectors will be denoted by $\Hsmooth$. 
In particular, given $k \in C_c^{\infty} (N)$ and $h \in \Hpi$, the 
associated \emph{G\aa rding vector} $\pi (k) h := \int_N k(x) \pi (x) h \; d\mu_N (x) \in \Hpi$ 
is a smooth vector. Moreover, by the Dixmier-Malliavin theorem \cite{dixmier1978factorisations}, 
\[
\Hsmooth = \spann \big\{ \pi (k) h \; : \; k \in C_c^{\infty} (N), \; h \in \Hpi \big\} . 
\]
The set of G\aa rding vectors is norm dense in $\Hpi$, and hence so is $\Hsmooth$.  

If $(\pi, \Hpi)$ is a relative discrete series representation of $N$, then 
for any two smooth vectors $g, h \in \Hsmooth$, 
the map $\langle f, \overline{\pi} (\cdot) g \rangle \in \mathcal{S} (G)$, 
where $\mathcal{S} (G)$ denotes the Schwartz space on the quotient group $G = N / Z(N)$, 
e.g, see \cite{corwin1990representations}. 
Consequently, any relative discrete series representation $(\pi, \Hpi)$ is also integrable 
in the sense that there exists a $g \in \Hpi \setminus \{0\}$ such that 
$\langle g, \overline{\pi} (\cdot) g \rangle \in L^1 (N / Z(N))$. 

For more information on projective and relative discrete series representations, 
the interested reader is referred to the books by Wolf  \cite{wolf2007harmonic} 
and Corwin and Greenleaf \cite{corwin1990representations}. 

\subsection{Amalgam spaces}
Let $G$ be a homogeneous group. 
For $F \in L^{\infty}_{\loc} (G)$, the associated (left-sided) \emph{control function}
$F^{\sharp} : G \to \mathbb{C}$ is given by 
$
F^{\sharp} (x) = \esssup_{u \in B_1 (e)} | F(xu)|
$
for $x \in G$. The (left-sided) \emph{Wiener amalgam space} $\WL(G)$ is defined by
\[
\WL(G) := \big\{ F \in L^{\infty}_{\loc} (G) \; : \; F^{\sharp} \in L^1 (G) \big\}
\]
and endowed with the norm $\| F \|_{\WL} := \| F^{\sharp} \|_{L^1}$. 
Instead of taking the supremum over the unit ball $B_1(e)$, one might
take the supremum over an arbitrary compact neighborhood of $e$ and
obtain an  equivalent norm on $W(L^\infty , L^1)(G)$.
See \cite{ho75, fost85} for background on amalgam spaces on the Euclidean space, and \cite{feichtinger1983banach} for their generalization to groups and
norms that measure smoothness.

Similarly, the right-sided control function of an element $F \in L^{\infty}_{\loc} (G)$ 
is defined by $F_{\sharp} (x) := \esssup_{u \in B_1 (e)} |F(ux)|$ and 
the associated amalgam space $\WR(G)$ is endowed 
with the norm $\| F \|_{\WR} := \| F_{\sharp} \|_{L^1}$. 
Note that $\| F \|_{\WR} = \| F^{\vee} \|_{\WL}$, where $F^{\vee} (x) := F(x^{-1})$. 

The (closed) subspaces of $\WL(G)$ and $\WR(G)$ consisting of continuous 
functions are denoted by $\WLC(G)$ and $\WRC(G)$, respectively. 

For technical reasons, we will need some non-standard amalgam spaces
considered in \cite{romero2012characterization}.
The \emph{strong  amalgam} space $\WLst (G)$ (or two-sided amalgam space) is
defined with the control function 
$$
F^\sharp _\sharp (x) = \esssup _{u,v \in B_1(e) } |F(uxv)| =
(F^\sharp)_\sharp (x) = (F_\sharp)^\sharp (x) \, ,
$$
and with norm $\| F \|_{\WRst} := \| (F^\sharp _{\sharp})
\|_{L^1}$. 
By definition, the space $\WLst(G)$ is contained in $\WL(G) \cap \WR(G)$ and
$F \in \WLst(G) $ if and only if $F^{\vee} \in \WLst(G)$, because $G$ is
unimodular. 
For all $x \in G$, it is easy to see that $(F_1 \ast F_2)^{\sharp}_{\sharp} (x) \leq ((F_1)_{\sharp}
\ast (F_2)^{\sharp}) (x) $ and thus it follows that 
\begin{align}
\label{eq_conv_rel}
\WR \ast \WL  \hookrightarrow \WLst.
\end{align}
See \cite[Section 2.4]{romero2012characterization} for similar estimates.

For $\alpha \in \mathbb{R}$, define the weight 
$w_{\alpha} : G \to \mathbb{R}^+, \; x \mapsto (1+|x|_G)^{\alpha}$ 
and equip the Beurling algebra $L^1_{w_{\alpha}} (G)$ with the norm $\| F \|_{L^1_{w_{\alpha}}} := \| w_{\alpha} \cdot F \|_{L^1}$. Then weighted versions of the amalgam spaces are defined as above 
using $L^1_{w_{\alpha}}$ instead of $L^1$.

Identifying the dual space $(C_0 (G))'$ with the space of complex regular Borel measures $\MG$, 
we may identify the dual space $(\WLC(G))'$ with $\WLM(G)$, the space of all 
(locally) complex regular Borel measures $\mu : \mathcal{B} (G) \to \mathbb{C}$ satisfying
\[
\| \mu \|_{\WLM} := \sup_{x \in G} |\mu|(xU) < \infty.
\]
The space $\WLM (G)$ is endowed with the norm $\| \cdot \|_{\WLM}$
and is often called the space of translation-bounded measures. 
If $\Lambda \subset G$ is a relatively separated set, then the measure
$\Sha_{\Lambda} := \sum_{\lambda \in \Lambda} \delta_{\lambda}$ 
belongs to $\WLM (G)$, with $\| \Sha_{\Lambda} \|_{\WLM} \asymp \rel(\Lambda)$. 

\subsection{Coorbit spaces} \label{sec:coorbit}
Let $(\pi, \Hpi)$ be a projective relative discrete series representation
 of a homogeneous group $G$. 
For a fixed non-zero $g \in \Hpi$,  define the associated map $V_g : \Hpi \to L^{\infty} (G)$ by 
$
V_g f (x) :=  \langle f, \pi (x) g \rangle. 
$
The class of \emph{analyzing vectors} $\Api$ is then defined by
$
\Api := \{ g \in \Hpi \; : \; V_g g \in L^1 (G) \}. 
$
The integrability of $(\pi, \Hpi)$ implies that $\Api \neq \emptyset$. 
For a fixed $g \in \Api \setminus \{0\}$, define
\[
\Hone := \{ f \in \Hpi \; : \; V_g f \in L^1 (G) \}
\]
and equip it with the norm $\| f \|_{\Hone} := \| V _g f \|_{L^1}$. 
Let $\Honedual$ denote the anti-dual space of $\Hone$, i.e., 
the space of all conjugate-linear functionals on $\Hone$. 
The associated sesquilinear dual pairing is denoted by 
$\langle \cdot, \cdot \rangle : \Honedual \times \Hone \to \mathbb{C}$. 
The extended representation coefficients are defined by
$
V_g f (x) := \langle f, \pi (x) g \rangle
$
for $f \in \Honedual$ and $g \in \Hone$. 

For $p \in [1,\infty]$ and $g \in \Api \setminus \{0\}$, 
the associated \emph{coorbit space} is defined as the space
\[
\CoLp := \bigg\{ f \in \Honedual \; : \; V_g f \in L^p (G) \bigg\}
\]
equipped with the norm $\| f \|_{\CoLp} := \| V_g f \|_{L^p}$. 

The spaces $\Hone$, $\Honedual$, and $\CoLp$ are $\pi$-invariant Banach spaces
independent of the choice of $g \in \Api \setminus \{0\}$, with equivalent norms for
different choices. Moreover, we have $\Api = \Hone = \Co(L^1(G))$ and $\Hpi = \Co(L^2 (G))$. 
See \cite{christensen1996atomic, feichtinger1989banach1, feichtinger1989banach2} for more details. 

As an auxillary space, we define the closed subspace 
\[
\Conull := \bigg\{ f \in \Co (L^{\infty}(G)) \; : \; V_g f \in C_0 (G) \bigg\}
\]
of $\Co (L^{\infty} (G))$. By duality of coorbit spaces, we then have
$\Conull' = \CoLone$ and $(\CoLone)' = \CoLinfty$ with the duality pairing
$\langle f, h \rangle := \langle V_g f, V_g h \rangle$.

\subsection{Coherent systems and associated operators} \label{sec:coherent}
Let $(\pi, \Hpi)$ be a projective discrete series representation of a homogeneous group $G$. 
In the treatment of coherent systems $\CS$ and their associated operators,  
we will occasionally use the smaller class of \emph{better vectors} $\Bpi \subseteq \Api$, 
defined by
\[
\Bpi := \bigg\{ g \in \Hpi \; : \; V_g g \in \WRC(G) \bigg\}. 
\]
In particular, any smooth vector $g \in \Hsmooth$ of a discrete series representation is in $\Bpi$.

Given $g \in \Bpi$ and a relatively separated set $\Lambda \subset G$,
the \emph{coefficient} and \emph{reconstruction operators} are defined by
\begin{align*}
C_{g, \Lambda} f = \big \{ \langle f, \pi (\lambda) g \rangle \big \}_{\lambda \in \Lambda},
\quad  f \in \CoLinfty
\end{align*}
and 
\begin{align*}
D_{g, \Lambda} c = \sum_{\lambda \in \Lambda} c_{\lambda} \pi (\lambda) g, 
\quad c = \{ c_{\lambda} \}_{\lambda \in \Lambda} \in \ell^{\infty} (\Lambda),
\end{align*}
respectively. 

For $p \in [1,\infty]$,
the maps $C_{g, \Lambda} : \CoLp \to \ell^p (\Lambda)$ 
and $D_{g, \Lambda} : \ell^p (\Lambda) \to \CoLp$ are well-defined and bounded,
with
\begin{align*}
\| C_{g, \Lambda} f \|_{\ell^p} \lesssim \rel(\Lambda) \| f \|_{\CoLp}  \\
\| D_{g, \Lambda} c \|_{\CoLp} \lesssim \rel(\Lambda) \| c \|_{\ell^p},
\end{align*}
where the implicit constants only depend on $g \in \Bpi$, 
see \cite{groechenig1991describing, groechenig2009molecules}. 

The coherent system $\pi (\Lambda) g = \{ \pi(\lambda)g \}_{\lambda \in \Lambda}$ is said to be a \emph{$p$-frame} for $\CoLp$ if
$\| C_{g, \Lambda} f \|_{\ell^p} \asymp \| f \|_{\CoLp}$ for all $f \in \CoLp$, 
while it is called a \emph{$p$-Riesz sequence} in $\CoLp$ if
$\| D_{g, \Lambda} c \|_{\CoLp} \asymp \| c \|_{\ell^p}$ for all $c \in \ell^p (\Lambda)$. 
For $p = 2$, the terminology coincides with the standard definitions for 
frames and Riesz sequences in a Hilbert space.  

We mention the following necessary conditions without proof; 
see \cite{groechenig2015deformation,fuehr2017density} for proofs in similar settings.

\begin{lemma} \label{lem:neccond_geometry}
Let $g \in \Hpi$ and let $\Lambda \subset G$ be a discrete set. 
\begin{enumerate}[(i)]
\item If $\CS$ forms a frame for $\Hpi$, then $\Lambda$ is relatively separated and relatively dense. 
\item If $\CS$ forms a Riesz sequence in $\Hpi$, then $\Lambda$ is separated.
\end{enumerate}
\end{lemma}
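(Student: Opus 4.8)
The plan is to prove both necessary conditions by exploiting the boundedness of the coefficient and reconstruction operators established earlier, together with the quantitative estimates relating operator norms to the relative separation constant $\rel(\Lambda)$. The key observation is that the frame and Riesz sequence inequalities force two-sided control on these operators, and this control can only be consistent with the local counting estimates if $\Lambda$ is geometrically well-distributed. I would work throughout with a fixed analyzing vector; by the density of G\aa rding vectors noted in Section~\ref{sec:projective}, we may assume $g \in \Bpi$ without loss of generality, since changing the generator to a better vector does not affect whether $\CS$ is a frame or a Riesz sequence (the orbits are comparable via the bounded operators $C_{g,\Lambda}$ and $D_{g,\Lambda}$).

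For part~(i), suppose $\CS$ forms a frame for $\Hpi = \Co(L^2(G))$. The \emph{relative separation} of $\Lambda$ follows from the upper frame bound: the inequality $\|C_{g,\Lambda} f\|_{\ell^2} \lesssim \|f\|_{\Hpi}$ together with testing against a suitable normalized coherent vector $\pi(x)g$ (or its approximation) shows that the number of points $\lambda \in \Lambda$ whose coherent vectors $\pi(\lambda)g$ overlap significantly with $\pi(x)g$ is uniformly bounded; quantitatively, summing the localized reproducing-kernel mass $|\langle \pi(x)g, \pi(\lambda)g\rangle|^2$ over $\lambda \in \Lambda \cap B_1(x)$ and using the off-diagonal decay guaranteed by $g \in \Bpi$ forces $\#(\Lambda \cap B_1(x))$ to be bounded uniformly in $x$, which is precisely $\rel(\Lambda) < \infty$. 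The \emph{relative density} follows from the lower frame bound: if $\Lambda$ were not relatively dense, there would exist balls $B_R(x_n)$ with $R \to \infty$ containing no points of $\Lambda$, and then a vector $f$ essentially concentrated (via the coorbit correspondence) on $B_R(x_n)$ would have $\|f\|_{\Hpi}$ bounded below while $\|C_{g,\Lambda}f\|_{\ell^2}$ is forced to be small by the decay of $V_g g$ away from the origin, contradicting the lower frame inequality.

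For part~(ii), suppose $\CS$ forms a Riesz sequence, so $\|D_{g,\Lambda}c\|_{\Hpi} \asymp \|c\|_{\ell^2}$. \emph{Separation} follows from the lower Riesz bound applied to the difference of two basis vectors: testing with $c = \delta_{\lambda} - \delta_{\lambda'}$ gives $\|\pi(\lambda)g - \pi(\lambda')g\|_{\Hpi}^2 \gtrsim \|\delta_\lambda - \delta_{\lambda'}\|_{\ell^2}^2 = 2$, while expanding the left-hand side yields $2\|g\|^2 - 2\,\mathrm{Re}\,\langle \pi(\lambda)g, \pi(\lambda')g\rangle$. If $\lambda, \lambda'$ were arbitrarily close in the homogeneous metric, strong continuity of $\pi$ would force $\langle \pi(\lambda)g, \pi(\lambda')g\rangle \to \|g\|^2$, making the left-hand side tend to $0$ and contradicting the uniform lower bound. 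Hence $\sep(\Lambda) > 0$.

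The main obstacle I anticipate is making the density argument in part~(i) fully rigorous in the non-Abelian homogeneous setting: one must construct the concentrated test vectors $f$ using the coorbit machinery and control their analysis coefficients outside a large ball, which requires the amalgam-space estimates and the integrable-vector decay of $V_g g$ rather than any Euclidean Fourier-analytic shortcut. Since the statement is quoted without proof (the paper defers to \cite{groechenig2015deformation, fuehr2017density} for the analogous results), I expect the cleanest route is to cite the off-diagonal decay encoded in $g \in \Bpi$ and reduce both geometric conclusions to the boundedness estimates for $C_{g,\Lambda}$ and $D_{g,\Lambda}$ already recorded in Section~\ref{sec:coherent}, invoking strong continuity of $\pi$ for the separation statements.
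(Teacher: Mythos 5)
The paper itself gives no proof of this lemma (it defers to \cite{groechenig2015deformation,fuehr2017density}), so your attempt must stand on its own; as written it has two genuine flaws. The first is the opening reduction: the claim that ``we may assume $g \in \Bpi$ without loss of generality, since changing the generator \dots does not affect whether $\CS$ is a frame or a Riesz sequence'' is false. The frame property depends essentially on the generator --- there is no bounded operator intertwining $\pi(\Lambda)g$ and $\pi(\Lambda)\widetilde{g}$ for different vectors, and the Balian--Low phenomenon itself shows that the frame property at a given $\Lambda$ can hold for a rough generator and fail for a nice one. Indeed, the whole point of Proposition \ref{prop:perturbation} in the paper is that transferring the frame property to a nearby smooth vector requires a quantitative perturbation argument, and even that is available only for $g \in \CoLone$ with the perturbation small in the $\Hone$-norm, not merely in $\Hpi$. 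This gap matters because your relative-density argument rests on ``the decay of $V_g g$ away from the origin'', which an arbitrary $g \in \Hpi$ does not possess: one only knows $V_g g \in C_0(G) \cap L^2(G)$, which does not control sums over relatively separated sets. The standard repair (this is what the cited references do, in the style of Ramanathan--Steger and Christensen--Deng--Heil) is to first get $\rel(\Lambda) < \infty$ from the upper frame bound, then test the lower frame bound against $f_n = \pi(x_n)h$ for an \emph{auxiliary} nice vector $h \in \Bpi$: the orthogonality relations give the pointwise bound $|V_h g| \lesssim |V_h g| \ast |V_h h|$, which places the relevant kernel in an amalgam space $W(C_0,L^2)$ for \emph{every} $g \in \Hpi$, and the amalgam tail estimate then forces $\sum_{\lambda} |\langle \pi(x_n)h, \pi(\lambda)g\rangle|^2 \to 0$ when $\Lambda$ misses $B_{R_n}(x_n)$ (with some care about left versus right control functions, which is precisely why the paper carries the spaces $\WR$ and $\WLst$ around).

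The second flaw is in part (ii): the step ``strong continuity of $\pi$ would force $\langle \pi(\lambda)g, \pi(\lambda')g\rangle \to \|g\|^2$'' is wrong for \emph{projective} representations, which is the setting of this lemma. The cocycle produces phases that do not tend to $1$ uniformly over pairs of nearby points far from the origin. Already for Gabor systems, $\langle M_{\omega}T_x g, M_{\omega+\varepsilon}T_x g\rangle = e^{-2\pi i \varepsilon x}\int e^{-2\pi i \varepsilon s}|g(s)|^2\,ds$, which is close to $-\|g\|^2$ whenever $\varepsilon x = 1/2$, no matter how small $\varepsilon$ is; for such pairs $\|\pi(\lambda)g - \pi(\lambda')g\|$ is close to $2\|g\|$, so your test vector $c = \delta_{\lambda} - \delta_{\lambda'}$ produces no contradiction. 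The conclusion survives because the Riesz lower bound is phase-insensitive: take instead $c = \delta_{\lambda} - \overline{\theta}\,\delta_{\lambda'}$, where $\theta$ is the phase of $\langle \pi(\lambda)g, \pi(\lambda')g\rangle$; then $2A \le \|\pi(\lambda)g - \theta\pi(\lambda')g\|^2 = 2\|g\|^2 - 2|V_g g(\lambda'^{-1}\lambda)|$, and continuity of $V_g g$ together with $V_g g(e) = \|g\|^2$ yields $\sep(\Lambda) > 0$. Your argument for relative separation in part (i) is essentially sound, but note that it needs neither $g \in \Bpi$ nor off-diagonal decay (decay points in the wrong direction there): it needs only that $|V_g g|$ is continuous and nonvanishing at $e$, so that each $\delta$-ball carries boundedly many points of $\Lambda$, plus a covering of $B_1(x)$ by finitely many $\delta$-balls --- which is fortunate, since as explained above the reduction to $\Bpi$ is not available.
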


\subsection{Universality of frames and Riesz sequences}
We state the following universality result.
Its proof relies on the stability and spectral invariance of 
localized matrices and is deferred to the appendix. 

\begin{theorem} \label{thm:frame_Riesz_universal}
Let $(\pi, \Hpi)$ be a 
projective relative discrete series representation of a homogeneous group $G$. 
Let $g \in \Hsmooth$ and let $\Lambda \subseteq G$ be relatively separated. 
\begin{enumerate}[(i)]
\item If $\CS$ forms a $p$-frame for $\CoLp$ for some $p \in [1,\infty]$, 
then $\CS$ forms a $p$-frame for $\CoLp$ for all $p \in [1,\infty]$. 
\item If $\CS$ forms a $p$-Riesz sequence in $\CoLp$ for some $p \in [1,\infty]$, 
then $\CS$ forms a $p$-Riesz sequence in $\CoLp$ for all $p \in [1,\infty]$. 
\end{enumerate}
\end{theorem}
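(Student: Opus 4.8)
The plan is to reduce each of the two statements to a single invertibility assertion inside a spectrally invariant Banach algebra of matrices with off-diagonal decay, and then to let that spectral invariance perform the transfer between different exponents $p$. Since $g \in \Hsmooth$, the reproducing kernel $V_g g$ lies in the strong amalgam space $\WLst(G)$ (indeed it is Schwartz on $G = N/Z(N)$), so the entries $\langle \pi(\lambda)g, \pi(\lambda')g\rangle$ are dominated, up to a unimodular factor, by $|V_g g(\lambda^{-1}\lambda')|$ and decay rapidly in the homogeneous metric $d_G(\lambda,\lambda') = |\lambda^{-1}\lambda'|_G$. Because $\Lambda$ is relatively separated, the Gramian $\Gamma_\Lambda := C_{g,\Lambda}D_{g,\Lambda}$, whose $(\lambda,\lambda')$ entry is $\langle \pi(\lambda')g, \pi(\lambda)g\rangle$, therefore belongs to the Sj\"ostrand-type matrix algebra $\mathcal{A}$ treated in Propositions~\ref{prop:sjostrand-wienertype} and~\ref{thm:wiener-type-extension}. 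The decisive input is that $\mathcal{A}$ acts boundedly on $\ell^p(\Lambda)$ for every $p \in [1,\infty]$ and is inverse-closed across this entire scale: an element of $\mathcal{A}$ invertible on one $\ell^p(\Lambda)$ is invertible on all of them, with inverse again in $\mathcal{A}$.

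I would treat the Riesz case (ii) first, as it is the direct one. A relatively separated coherent system $\CS$ is a $p$-Riesz sequence precisely when $D_{g,\Lambda}$ is bounded below on $\ell^p(\Lambda)$, and I would show this is equivalent to invertibility of $\Gamma_\Lambda$ on $\ell^p(\Lambda)$. The upper bounds for $C_{g,\Lambda}$ and $D_{g,\Lambda}$ give $\|\Gamma_\Lambda c\|_{\ell^p} \lesssim \|D_{g,\Lambda}c\|_{\Co(L^p(G))} \lesssim \|c\|_{\ell^p}$ automatically, while if $\Gamma_\Lambda$ is invertible then $\|c\|_{\ell^p} \lesssim \|\Gamma_\Lambda c\|_{\ell^p} \lesssim \|D_{g,\Lambda}c\|_{\Co(L^p(G))}$ supplies the lower bound; the converse implication, that a $p$-Riesz sequence makes $\Gamma_\Lambda$ invertible on $\ell^p(\Lambda)$, is secured by the same matrix analysis, anchored at $p=2$ where $\Gamma_\Lambda$ is positive and bounded below, hence invertible. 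With the equivalence in hand the conclusion is immediate: if $\CS$ is a $p_0$-Riesz sequence then $\Gamma_\Lambda$ is invertible on $\ell^{p_0}(\Lambda)$, hence by inverse-closedness invertible on $\ell^p(\Lambda)$ for every $p$, hence $\CS$ is a $p$-Riesz sequence for all $p \in [1,\infty]$.

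The frame case (i) is the genuine obstacle, because a frame is governed not by the Gramian but by the frame operator $S_{g,\Lambda} := D_{g,\Lambda}C_{g,\Lambda}$, which acts on the continuous coorbit space $\CoLp$ rather than on a fixed sequence space; for an overcomplete system $\Gamma_\Lambda$ fails to be invertible, so the reduction above is unavailable. My plan is to complete a loop through the Hilbert exponent $p=2$. The system is a $p$-frame iff $S_{g,\Lambda}$ is invertible on $\Co(L^p(G))$, and I would first argue that this invertibility is itself $p$-independent by discretizing $S_{g,\Lambda}$ against a bounded partition of unity, so that its action on the coorbit scale is modelled by a matrix in $\mathcal{A}$ and the spectral invariance of $\mathcal{A}$ again applies; in particular a $p_0$-frame yields invertibility of $S_{g,\Lambda}$ on $\Hpi = \Co(L^2(G))$. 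At $p=2$ the canonical dual frame $\widetilde{g_\lambda} = S_{g,\Lambda}^{-1}\pi(\lambda)g$ exists, and here I would invoke Proposition~\ref{prop:canonical_dual}: it consists of molecules, so the cross-Gramian $(\langle \pi(\lambda)g, \widetilde{g_{\lambda'}}\rangle)_{\lambda,\lambda'}$ lies in $\mathcal{A}$ and the reconstruction $f = \sum_\lambda \langle f, \widetilde{g_\lambda}\rangle\,\pi(\lambda)g$ extends boundedly and boundedly invertibly to every $\CoLp$. This simultaneously furnishes a common dual reconstruction for all $p$ and shows $S_{g,\Lambda}$ is invertible on each $\CoLp$, giving the $p$-frame property for all $p \in [1,\infty]$.

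The hardest point, and the one carrying the real content, is the localization of the canonical dual encapsulated in Proposition~\ref{prop:canonical_dual}: passing from invertibility of the frame operator on $\Hpi$ to off-diagonal decay of $S_{g,\Lambda}^{-1}\pi(\lambda)g$ is exactly what lets the matrix algebra $\mathcal{A}$ govern the frame property across the whole coorbit scale. Once that molecular structure and the inverse-closedness of $\mathcal{A}$ are granted, both parts follow by the same spectral-invariance mechanism, the Riesz case reducing to a literal matrix inversion and the frame case to the invertibility of $S_{g,\Lambda}$ transported through $p=2$.
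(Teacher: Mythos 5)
Your overall mechanism---rapid off-diagonal decay of the Gramian coming from $g \in \Hsmooth$, Sj\"ostrand-type stability and spectral invariance, and localization of canonical duals---is the right one, and the halves of your argument anchored at $p=2$ are sound: invertibility of the Gramian on $\ell^p(\Lambda)$ does imply the $p$-Riesz property, and a $2$-frame with Gramian in the Schur class has a molecular canonical dual (by the pseudo-inverse closedness in Theorem \ref{thm:schur_algebra}), whence reconstruction extends to every $\Co(L^p(G))$. The genuine gap is the transfer from an arbitrary exponent $p_0$ \emph{to} $p=2$, in both parts. In (ii) you claim that a $p_0$-Riesz sequence makes $\Gamma_\Lambda$ invertible on $\ell^{p_0}(\Lambda)$, ``secured by the same matrix analysis, anchored at $p=2$''---but the anchor is unavailable, since the hypothesis is only at $p_0$, and Proposition \ref{prop:sjostrand-wienertype} cannot be applied to $\Gamma_\Lambda$ without already knowing it is bounded below on $\ell^{p_0}(\Lambda)$, which is precisely the claim. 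The $p_0$-Riesz hypothesis is boundedness below of the synthesis operator $D_{g,\Lambda}$, which is not a matrix; by duality it only yields $\| \Gamma_\Lambda c \|_{\ell^{p_0}} \lesssim \| D_{g,\Lambda} c \|_{\Co(L^{p_0}(G))}$, the wrong direction. The paper closes this by discretizing against a reference frame $\pi(\Gamma)h$ from Proposition \ref{prop:canonical_dual}: the cross-Gramian $C_{h,\Gamma} C^*_{g,\Lambda}$ \emph{is} a matrix with the required envelope, and it is bounded below on $\ell^{p_0}(\Lambda)$ because $C_{h,\Gamma}$ is bounded below on $\Co(L^{p_0}(G))$; then the stability result applies.

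The same defect recurs, more seriously, in (i). A $p_0$-frame gives boundedness below of $C_{g,\Lambda}$ on $\Co(L^{p_0}(G))$, not invertibility or lower bounds for $S_{g,\Lambda} = D_{g,\Lambda}C_{g,\Lambda}$ there: for $p_0 \neq 2$ no positivity argument is available, and your ``discretizing against a bounded partition of unity'' is exactly where the overcompleteness bites. After discretizing through the reference frame, the frame inequality for $\CS$ translates into boundedness below of $A = C_{g,\Lambda}C^*_{h,\Gamma}$ only on the \emph{range of the idempotent} $P = C_{\widetilde{H}}C^*_{h,\Gamma}$, and Proposition \ref{prop:sjostrand-wienertype} does not apply to a matrix bounded below merely on a subspace. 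The paper's Theorem \ref{thm:wiener-type-extension}---which augments $AP$ by $I-P$ into a single matrix over the enlarged homogeneous group $G \times \mathbb{R}$---exists precisely to handle this, and you never supply an equivalent device, so your loop through $p=2$ never starts. A secondary misstep: Proposition \ref{prop:canonical_dual} is an existence statement about a specially constructed reference frame, not about the given system, so invoking it for $\widetilde{g}_\lambda = S_{g,\Lambda}^{-1}\pi(\lambda)g$ is a misapplication; the analogous molecular estimate for $\CS$ can indeed be derived at $p=2$ by the same Schur-algebra argument, but only \emph{after} the $p_0 \to 2$ transfer that your proposal leaves open.
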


\begin{remark}
The smoothness condition $g \in \Hsmooth$ in Theorem \ref{thm:frame_Riesz_universal} 
is sufficient for our purposes, but
it can be weakened to the assumption $g \in \Co (L_{w_{\alpha}} (G))$ for $\alpha \geq Q+1$. 
\end{remark}

\section{Stability of frames and Riesz sequences}
This section is devoted to the stability of coherent frames and Riesz sequences 
under weak limits of translates and homogeneous dilations of the index set. 

\subsection{Weak limits of translates}
We start by introducing the notion of weak convergence of sets in the setting 
of a homogeneous group. 

\begin{definition}
Let $G$ be a homogenous group and let $\Lambda \subseteq G$ be arbitrary. 
A sequence $\{ \Lambda_n \}_{n \in \mathbb{N}}$ of subsets $\Lambda_n \subset G$ 
is said to \emph{converge weakly} to $\Lambda$ if for every $R > 0$ and $\varepsilon > 0$, 
there exists an $n_0 \in \mathbb{N}$ such that, for all $n \geq n_0$,
\begin{align} \label{eq:weak_convergence}
\Lambda \cap B_R (e) \subseteq B_{\varepsilon} (e) \Lambda_n 
\quad \text{and} \quad
\Lambda_n \cap B_R (e) \subseteq  B_{\varepsilon} (e) \Lambda. 
\end{align}
The weak convergence of $\{\Lambda_n\}_{n \in \mathbb{N}}$  to $\Lambda$ 
will be denoted by $\Lambda_n \xrightarrow{w} \Lambda$. 
\end{definition}

Given a relatively separated set $\Lambda \subset G$, 
we denote by $W(\Lambda)$ the set of $\Gamma \subset G$ for which there exists a sequence $\{x_n\}_{n \in \mathbb{N}} \subset G$ such that $x_n^{-1} \Lambda \xrightarrow{w} \Gamma$. 
Note that any weak limit $\Gamma \in W(\Lambda)$ is relatively separated, and hence closed. 

\begin{theorem}
Let $(\pi, \Hpi)$ be a 
projective relative discrete series representation of a homogeneous group $G$. 
Let $p \in [1,\infty]$ and let $\Lambda \subseteq G$ be relatively separated. 

If $g \in \Hsmooth$ and $\CS$ is a $p$-frame for $\CoLp$, 
then $\pi (\Gamma) g$ is a $p$-frame for $\CoLp$ for any $\Gamma \in W(\Lambda)$. 
\end{theorem}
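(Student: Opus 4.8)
The plan is to reduce to the Hilbert space case $p=2$ and then transport a uniform lower frame bound from the translates $\Lambda_n := x_n^{-1}\Lambda$ to the weak limit $\Gamma$ by a pointwise sampling argument. First I would invoke Theorem~\ref{thm:frame_Riesz_universal}(i): since $g \in \Hsmooth$ and $\Lambda$ is relatively separated, the $p$-frame hypothesis for the given $p$ is equivalent to $\pi(\Lambda)g$ being a $2$-frame; conversely, it will suffice to prove that $\pi(\Gamma)g$ is a $2$-frame, as $\Gamma$ is again relatively separated and $g$ is smooth, so that a final application of the universality theorem upgrades it to a $p$-frame for every $p$. Throughout I work with the frame operator $S_\Lambda f = \sum_{\lambda \in \Lambda} \langle f, \pi(\lambda)g\rangle \pi(\lambda)g$ on $\Hpi$, whose quadratic form is $\langle S_\Lambda f, f\rangle = \|C_{g,\Lambda}f\|_{\ell^2}^2$.

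The upper bound is immediate: since $\Gamma$ is relatively separated and $g \in \Bpi$, the boundedness of $C_{g,\Gamma} \colon \Hpi \to \ell^2(\Gamma)$ recorded in Section~\ref{sec:coherent} yields $\langle S_\Gamma f, f\rangle \le B\|f\|^2$. For the lower bound I first note that the frame property is invariant under left translation of the index set: using the cocycle relation $\pi(x_n^{-1}\lambda) = \sigma(x_n^{-1},\lambda)\,\pi(x_n^{-1})\pi(\lambda)$ with $|\sigma| = 1$, a direct computation gives $S_{\Lambda_n} = \pi(x_n^{-1})\, S_\Lambda\, \pi(x_n^{-1})^{*}$. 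Hence each $S_{\Lambda_n}$ is unitarily conjugate to $S_\Lambda$ and therefore satisfies $\langle S_{\Lambda_n} f, f\rangle \ge A\|f\|^2$ with the \emph{same} lower bound $A > 0$ for all $n$; likewise $\rel(\Lambda_n) = \rel(\Lambda)$.

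It now remains to pass this uniform lower bound to $\Gamma$. The key step is the pointwise convergence of the quadratic forms on a dense set: for every smooth vector $f \in \Hsmooth$ I claim that
\begin{equation*}
\sum_{\mu \in \Lambda_n} |V_g f(\mu)|^2 \longrightarrow \sum_{\gamma \in \Gamma} |V_g f(\gamma)|^2 \qquad (n \to \infty).
\end{equation*}
Granting this, $\langle S_\Gamma f, f\rangle = \lim_n \langle S_{\Lambda_n} f, f\rangle \ge A\|f\|^2$ for all $f \in \Hsmooth$; since $S_\Gamma$ is bounded and $\Hsmooth$ is dense in $\Hpi$, the inequality $\langle S_\Gamma f, f\rangle \ge A\|f\|^2$ extends to all of $\Hpi$ by continuity, which is the desired lower frame bound. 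Together with the upper bound this shows $\pi(\Gamma)g$ is a $2$-frame, and Theorem~\ref{thm:frame_Riesz_universal}(i) then gives the $p$-frame property for every $p \in [1,\infty]$.

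To prove the displayed convergence I would use that for $f, g \in \Hsmooth$ the function $H := |V_g f|^2$ is continuous and, by the Schwartz decay of $V_g f$ recorded in Section~\ref{sec:projective}, lies in $\WLC(G)$; in particular $\sum_k \|H\|_{L^\infty(B_1(x_k))} < \infty$ along any covering of $G$ by unit balls. Since $\rel(\Lambda_n) = \rel(\Lambda)$ and $\Gamma$ is relatively separated, this amalgam decay furnishes a \emph{uniform} tail estimate: given $\varepsilon > 0$ one fixes $R$ so that the sums over points outside $B_R(e)$ are $<\varepsilon$ simultaneously for all $\Lambda_n$ and for $\Gamma$. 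On the relatively compact ball $B_R(e)$ the weak convergence $\Lambda_n \xrightarrow{w} \Gamma$, combined with the uniform bound on the number of points per unit ball, lets one match the points of $\Lambda_n \cap B_R(e)$ with those of $\Gamma \cap B_R(e)$ up to arbitrarily small error, and the uniform continuity of $H$ on $B_{R+1}(e)$ forces the finite partial sums to converge. Combining the local convergence with the uniform tail bound yields the claim. I expect the main obstacle to be exactly this last step: establishing the local point-matching for \emph{relatively separated} (rather than separated) limit sets, where clusters of nearby points must be counted consistently on both sides; this is where the uniform control $\rel(\Lambda_n) = \rel(\Lambda)$ and the precise form of weak convergence are used most delicately.
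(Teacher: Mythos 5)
Your proposal takes a genuinely different route from the paper (quadratic forms and weak-$*$ compactness rather than duality), but it has a genuine gap, and it sits exactly at the step you flag without resolving: the claimed convergence $\sum_{\mu \in \Lambda_n} |V_g f(\mu)|^2 \to \sum_{\gamma \in \Gamma} |V_g f(\gamma)|^2$ is false in general. Weak convergence of sets in the sense of \eqref{eq:weak_convergence} does not track multiplicities. Since $\Lambda$ is only \emph{relatively separated}, it may contain clusters of up to $\rel(\Lambda)$ distinct points at mutually vanishing distances located farther and farther out; translating such clusters to the origin, $x_n^{-1}\Lambda$ can converge weakly to a set $\Gamma$ in which an entire cluster has coalesced into a single point $\gamma$. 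Then any limit of $\langle S_{\Lambda_n} f, f\rangle$ counts $|V_g f(\gamma)|^2$ with multiplicity $m_\gamma \geq 2$, while $\langle S_\Gamma f, f\rangle$ counts it once, so the limit of the quadratic forms can be \emph{strictly larger} than $\langle S_\Gamma f, f\rangle$ --- the discrepancy points in the unfavorable direction, and the uniform lower bound $A$ for $S_{\Lambda_n}$ does not transfer to $S_\Gamma$. (Your other ingredients are sound: the identity $S_{\Lambda_n} = \pi(x_n^{-1}) S_\Lambda \pi(x_n^{-1})^*$ is correct for a projective representation since $|\sigma|=1$, and the uniform tail estimate via the amalgam decay of $|V_g f|^2$ and $\rel(\Lambda_n)=\rel(\Lambda)$ is fine; the failure is purely the local matching.)

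The strategy is salvageable, but only with a weaker, one-sided comparison in place of convergence: since every point of $\Lambda_n \cap B_R(e)$ lies within $\varepsilon$ of some $\gamma \in \Gamma$, and at most $\rel(\Lambda)$ points of $\Lambda_n$ can be assigned to the same $\gamma$, one gets $\limsup_n \langle S_{\Lambda_n} f, f\rangle \leq C \,\rel(\Lambda)\, \langle S_\Gamma f, f\rangle$ for smooth $f$, hence a lower frame bound of order $A/\rel(\Lambda)$ --- which suffices for the frame property, though it loses the sharp constant. It is instructive to see how the paper's proof sidesteps the multiplicity problem entirely: by Theorem \ref{thm:frame_Riesz_universal} and the closed range theorem, the frame property is reformulated as surjectivity of the synthesis map $C^*_{g,\Lambda} : \ell^1(\Lambda) \to \CoLone$; one fixes $f$, takes uniformly $\ell^1$-bounded atomic decompositions along each $\Lambda_n$, and passes to a vague limit of the coefficient measures $\mu_n$. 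Under vague convergence, coalescing points are harmless --- their coefficients simply \emph{add} in the limit measure $\mu = \sum_{\gamma \in \Gamma} c_\gamma \delta_\gamma$, with $\|c\|_{\ell^1} \leq \liminf_n \|\mu_n\|_M$ --- which is precisely why the $\ell^1$/duality route handles relatively separated index sets cleanly, whereas the quadratic-form route requires the extra multiplicity bookkeeping your proposal omits.
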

\begin{proof}
By Theorem \ref{thm:frame_Riesz_universal}, 
the adjoint map $C_{g, \Lambda} = D_{g, \Lambda}^* : \CoLinfty \to \ell^{\infty} (\Lambda)$ 
of $D_{g, \Lambda} : \ell^1 (\Lambda) \to \Co(L^1 (G))$ is bounded from below, 
hence $C^*_{g, \Lambda} : \ell^1 (\Lambda) \to \CoLone$ 
is surjective by the closed range theorem \cite[Theorem 4.13]{rudin1991functional}. 

We will show that also $C^*_{g, \Gamma} : \ell^1 (\Gamma) \to \CoLone$ 
is surjective. Fix $f \in \CoLone$ and  let $\{x_n\}_{n \in \mathbb{N}}$ be a sequence in $G$ such that $x_n^{-1} \Lambda \xrightarrow{w} \Gamma$. 
For $n \in \mathbb{N}$, define $\Lambda_n := x_n^{-1} \Lambda$. 
Then, for fixed $n \in \mathbb{N}$, 
the map $C^*_{g,  \Lambda_n} : \ell^1 (\Lambda_n) \to \CoLone$ is also a bounded surjection. 
Moreover, by the open mapping theorem, the maps $C^*_{g, \Lambda_n}$ 
have bounds on preimages independent of $n \in \mathbb{N}$. 
Thus
there exists a sequence $\{c^{(n)}_{\lambda} \}_{\lambda \in \Lambda_n}$ 
satisfying $\| c^{(n)} \|_{\ell^1} \lesssim 1$, with a constant independent of $n$, 
and such that $f = \sum_{\lambda \in \Lambda_n} c_{\lambda}^{(n)} \pi (\lambda) g$,
with norm convergence in $\CoLone$. 

Define $\mu_n := \sum_{\lambda \in \Lambda_n} c^{(n)}_{\lambda} \delta_{\lambda} \in M(G)$, 
and note that $\|\mu_n \|_M := |\mu_n|(G) = \| c^{(n)} \|_{\ell^1} \lesssim 1$,
with a bound independent of $n$.
By Banach-Alaoglu's theorem, there exists a subsequence, 
also denoted by $\{ \mu_n \}_{n \in \mathbb{N}}$, 
such that $\mu_n \to \mu \in M(G)$ in the vague topology $\sigma(M, C_0)$. 
Since $\supp (\mu_n) \subseteq x_n^{-1} \Lambda$ and $x_n^{-1} \Lambda \xrightarrow{w} \Gamma$ 
by assumption, it follows that 
$\supp (\mu) \subseteq \overline{\Gamma} = \Gamma$ 
by the defining condition \eqref{eq:weak_convergence}. 
Consequently, we can write $\mu = \sum_{\lambda \in \Gamma} c_{\lambda} \delta_{\lambda}$ 
for some sequence $c = \{ c_{\lambda}\}_{\lambda \in \Gamma}$ satisfying
$\| c \|_{\ell^1} = |\mu| (G) \leq \liminf_{n \to \infty} \|\mu_n \|_M \lesssim 1$. 
Using this, we define $f' \in \Co(L^1)(G)$ by
\[ f' := \sum_{\lambda \in \Gamma} c_{\lambda} \pi (\lambda) g. \]
Since $V_g \pi (x) g \in \WLC(G) \hookrightarrow  C_0 (G)$ for arbitrary, but fixed $x \in G$, 
a direct calculation yields
\begin{align*}
\langle f, \pi (x) g \rangle 
= \sum_{\lambda \in \Lambda_n} c_{\lambda}^{(n)} \overline{V_g \pi(x)
  g } 
= \int_G \overline{V_g \pi(x) g} \; d\mu_n 
\to 
\int_G \overline{V_g \pi(x) g } \; d\mu
= \langle f', \pi(x) g \rangle,
\end{align*}
hence $f = f'$. 
This shows that $C_{g, \Gamma}^* : \ell^1 (\Gamma) \to \CoLone$ is a surjection. 
Another application of the closed range theorem yields that 
$C_{g, \Gamma} : \CoLinfty \to \ell^{\infty} (\Gamma)$ is bounded 
from below, thus showing that $\pi (\Gamma) g$ forms an $\infty$-frame for 
$\CoLinfty$, and, more generally, a $p$-frame for $\CoLp, p \in [1, \infty],$
by Theorem \ref{thm:frame_Riesz_universal}. 
\end{proof}

The following result provides the stability of Riesz sequences under weak limits 
of translates. 

\begin{theorem}
Let $(\pi, \Hpi)$ be a 
projective relative discrete series representation of a homogeneous group $G$. 
Let $p \in [1,\infty]$ and let $\Lambda \subseteq G$ be separated. 

If $g \in \Hsmooth$ and $\CS$ is a $p$-Riesz sequence in $\CoLp$, 
then $\pi (\Gamma) g$ is a $p$-Riesz sequence in $\CoLp$ for any $\Gamma \in W(\Lambda)$. 
\end{theorem}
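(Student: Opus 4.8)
The plan is to dualize the preceding argument in the same way that the Riesz sequence property dualizes the frame property. By Theorem~\ref{thm:frame_Riesz_universal} applied to $\Lambda$, the hypothesis that $\CS$ is a $p$-Riesz sequence for some $p$ already yields that it is a $1$-Riesz sequence; equivalently, the synthesis map $D_{g,\Lambda} : \ell^1(\Lambda) \to \CoLone$ is bounded below. Its adjoint with respect to the dualities $\CoLinfty = (\CoLone)'$ and $\ell^\infty = (\ell^1)'$ is precisely $C_{g,\Lambda} = D_{g,\Lambda}^* : \CoLinfty \to \ell^\infty(\Lambda)$, so the closed range theorem \cite[Theorem 4.13]{rudin1991functional} shows that $C_{g,\Lambda}$ is \emph{surjective}. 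Thus the whole task reduces to proving that $C_{g,\Gamma} : \CoLinfty \to \ell^\infty(\Gamma)$ is surjective for every $\Gamma \in W(\Lambda)$: a final application of the closed range theorem then returns the lower bound for $D_{g,\Gamma}$, i.e.\ the $1$-Riesz property of $\pi(\Gamma)g$, and Theorem~\ref{thm:frame_Riesz_universal} upgrades this to the $p$-Riesz property for all $p \in [1,\infty]$.

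First I would record the geometric preliminaries. Writing $\Lambda_n := x_n^{-1}\Lambda$ with $x_n^{-1}\Lambda \xrightarrow{w} \Gamma$, each $\pi(\Lambda_n)g$ is again a $1$-Riesz sequence with the \emph{same} bounds as $\pi(\Lambda)g$, since $\pi(x_n^{-1})$ is unitary and the cocycle is unimodular; consequently every $C_{g,\Lambda_n}$ is surjective, and by the open mapping theorem with solution bounds uniform in $n$ (exactly as in the previous proof). I would also verify that $\Gamma$ is in fact separated, not merely relatively separated: as $\Lambda$ is separated by Lemma~\ref{lem:neccond_geometry}(ii), its separation constant survives the weak limit by the triangle inequality for $|\cdot|_G$. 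Finally, because $g \in \Hsmooth \subseteq \Bpi$, the orbit map $x \mapsto \pi(x)g$ is norm-continuous into $\CoLone$, which embeds continuously into the predual $\Conull$ (by the reproducing formula and $L^1 \ast C_0 \subseteq C_0$, using $V_g g \in \WRC$); hence $x \mapsto \pi(x)g$ is norm-continuous into $\Conull$.

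Now comes the core construction. Fix a target $c = \{c_\gamma\}_{\gamma \in \Gamma} \in \ell^\infty(\Gamma)$ with $\|c\|_{\ell^\infty} \le 1$. Using weak convergence together with the separatedness of $\Gamma$, for all large $n$ every $\gamma \in \Gamma$ admits a unique nearest point $\lambda_n(\gamma) \in \Lambda_n$ with $|\lambda_n(\gamma)^{-1}\gamma|_G$ small, and $\gamma \mapsto \lambda_n(\gamma)$ is injective. I transfer $c$ to $c^{(n)} \in \ell^\infty(\Lambda_n)$ by setting $c^{(n)}_{\lambda_n(\gamma)} := c_\gamma$ and $c^{(n)}_\lambda := 0$ otherwise, so $\|c^{(n)}\|_{\ell^\infty} \le 1$. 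Uniform surjectivity produces $f_n \in \CoLinfty$ with $C_{g,\Lambda_n} f_n = c^{(n)}$ and $\|f_n\|_{\CoLinfty} \lesssim 1$; since $\CoLinfty = (\Conull)'$, Banach--Alaoglu gives a weak-$*$ convergent subsequence $f_n \to f$. Combining weak-$*$ convergence of $f_n$ with the norm convergence $\pi(\lambda_n(\gamma))g \to \pi(\gamma)g$ in $\Conull$ established above, I obtain for each fixed $\gamma \in \Gamma$
\[
\langle f, \pi(\gamma)g\rangle = \lim_n \langle f_n, \pi(\lambda_n(\gamma))g\rangle = \lim_n c^{(n)}_{\lambda_n(\gamma)} = c_\gamma ,
\]
so that $C_{g,\Gamma} f = c$, proving surjectivity.

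The main obstacle is this transfer step, and it reflects a genuine asymmetry with the frame proof. There the target $f \in \CoLone$ was independent of the index set, so one simply re-synthesized it along each $\Lambda_n$; here the target is a \emph{sequence} living on the moving index set, and it must be carried from $\Gamma$ to $\Lambda_n$ along the weak correspondence. The two delicate points are (a) that for large $n$ the nearest-point map $\gamma \mapsto \lambda_n(\gamma)$ is injective with $\|c^{(n)}\|_{\ell^\infty} \le \|c\|_{\ell^\infty}$, which rests on the separatedness of $\Gamma$ just established, and (b) the passage to the limit in the pairing, which genuinely requires \emph{norm}-continuity of the orbit map into $\Conull$ rather than mere weak continuity. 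Note that no condition need be placed on $f$ away from $\Gamma$: we only require that it reproduce $c$ on $\Gamma$, which is why the possible presence of additional points of $\Lambda_n$ (each carrying coefficient $0$) causes no difficulty.
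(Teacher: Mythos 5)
Your dual strategy is legitimate and genuinely different from the paper's: the paper works at the opposite end of the duality, showing that $C_{g,\Lambda} : \CoLone \to \ell^1(\Lambda)$ is surjective and then, for each \emph{fixed} $\gamma \in \Gamma$, solving the delta-interpolation problem along $\Lambda_n$ (an $h_n \in \CoLone$ with $V_g h_n(x_n^{-1}\gamma_n) = 1$ and $V_g h_n = 0$ on the rest of $\Lambda_n$), extracting a weak$^*$ limit $h_\gamma$ in $\CoLone = \Conull'$, and finally synthesizing $f = \sum_{\gamma} c_\gamma h_\gamma$ for $c \in \ell^1(\Gamma)$. That arrangement only ever uses the convergence $x_n^{-1}\Lambda \xrightarrow{w} \Gamma$ near one fixed point at a time, which is exactly what the definition of weak convergence provides.

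Your version has a genuine gap at the step you yourself single out as the main obstacle: the transfer of $c \in \ell^\infty(\Gamma)$ to $c^{(n)} \in \ell^\infty(\Lambda_n)$. You claim that for all large $n$ \emph{every} $\gamma \in \Gamma$ has a nearest point $\lambda_n(\gamma) \in \Lambda_n$ at small distance and that $\gamma \mapsto \lambda_n(\gamma)$ is injective, citing weak convergence plus separatedness of $\Gamma$. Weak convergence cannot deliver this: in the definition, $n_0$ depends on the radius $R$, so at any fixed $n$ the inclusion $\Gamma \cap B_R(e) \subseteq B_\varepsilon(e)\Lambda_n$ is guaranteed only on a bounded window, and outside that window $\Lambda_n$ may have no points anywhere near $\Gamma$; separation of $\Gamma$ gives local uniqueness of a near match, but neither global existence nor global injectivity. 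Concretely, on $G = \mathbb{R}$ take $\Lambda = \{k \in \mathbb{Z} : k > 0\} \cup \{2k : k \in \mathbb{Z},\ k \leq 0\}$ and $x_n = n$: then $\Lambda - n \xrightarrow{w} \mathbb{Z} = \Gamma$, but for every fixed $n$ the set $\Lambda - n$ has density $1/2$ on $(-\infty, -n)$, so no injective map from $\Gamma$ into $\Lambda - n$ with small displacements can exist there, and the nearest-point map is two-to-one. The gap is repairable without changing your architecture, because your limit argument only needs, for each \emph{fixed} $\gamma$, that $c^{(n)}$ eventually carries the value $c_\gamma$ at points tending to $\gamma$: choose scales $\delta_n \to 0$ diagonally with respect to the weak convergence, set $c^{(n)}_\lambda := c_\gamma$ whenever $\gamma$ is the \emph{unique} point of $\Gamma$ with $\gamma \in B_{\delta_n}(e)\lambda$, and $c^{(n)}_\lambda := 0$ otherwise (local finiteness of $\Gamma$ suffices for the eventual uniqueness at each fixed $\gamma$; beware also that closeness in the weak-convergence sense, $\gamma \in B_\varepsilon(e)\lambda$, is right-invariant while $\sep(\Gamma)$ is left-invariant, so in a non-abelian group your ``triangle inequality'' step is not automatic). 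Then $\|c^{(n)}\|_{\ell^\infty} \leq \|c\|_{\ell^\infty}$ and the rest of your argument runs. One smaller correction: $\CoLinfty$ is the dual of $\CoLone$, not of $\Conull$ (the paper has $\Conull' = \CoLone$ and $(\CoLone)' = \CoLinfty$), so Banach--Alaoglu gives $f_n \to f$ in $\sigma(\CoLinfty, \CoLone)$, and the continuity you need is norm continuity of $x \mapsto \pi(x)g$ into $\CoLone = \Hone$ (which does hold for $g \in \Hsmooth$), not into $\Conull$.
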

\begin{proof}
The map $C^*_{g, \Lambda} :  \ell^{\infty} (\Lambda) \to \CoLinfty)$ 
is bounded from below by Theorem \ref{thm:frame_Riesz_universal}. 
Thus, by the closed range theorem, the map $C_{g, \Lambda} : \CoLone \to \ell^1 (\Lambda)$ 
is surjective. 

Consider a sequence $\Lambda_n = x_n^{-1} \Lambda \xrightarrow{w} \Gamma$. 
For an arbitrary, but fixed $\gamma \in \Gamma$, choose a sequence $\{ \gamma_n \}_{n \in \mathbb{N}}$ 
of points $\gamma_n \in \Lambda$ such that $x_n^{-1} \gamma_n \to \gamma$. 
Since $C_{g, \Lambda} : \CoLone \to \ell^1 (\Lambda)$ is open and surjective, 
for any $c \in \ell^1 (\Lambda)$ with $\| c \|_{\ell^1} = 1$ there exists an 
$f \in \CoLone$ with $\| f \|_{\CoLone} \lesssim 1$ such that $c = C_{g, \Lambda} f$. 
Moreover, for each $n \in \mathbb{N}$, there exists an $f_n \in \CoLone$ 
with $\| f_n \|_{\Co(L^1(G))} \lesssim 1$ such that $c = C_{g, \Lambda_n} f_n$ for $\Lambda_n := x_n^{-1} \Lambda$.  
In particular, there exists an $h_n \in \CoLone$ with $\|V_g h_n \|_{L^1} \lesssim 1$, 
and $V_g h_n (x_n^{-1} \gamma_n) = 1$ and $V_g h_n = 0$ on $x_n^{-1} \Lambda \setminus \{ x_n^{-1} \gamma_n \}$. 
By passing to a subsequence if necessary, it may be assumed that $h_n \to h$ in $\sigma (\CoLone, \Conull)$
for some $h \in \CoLone$ with $\| h \|_{\CoLone} \lesssim 1$. 
By \cite[Theorem 4.1]{feichtinger1989banach1}, 
it follows that $V_g h_n \to V_g h$ with uniform convergence on compacta in $G$. 
Hence
\[ V_g h(\gamma) = \lim_{n \to \infty} V_g h_n (x_n^{-1} \gamma_n) = 1.\] 

Similarly, for $\gamma' \in \Gamma \setminus \{\gamma\}$,
there exists a sequence $\{ \gamma'_n \}_{n \in \mathbb{N}}$ 
in $\Lambda$ such that $x_n^{-1} \gamma'_n \to \gamma'$ 
and $V_g h_n (x_n^{-1} \gamma'_n) = 0$ for sufficiently large $n$,
yielding that $V_g h (\gamma') = 0$. 

Combining the above, it follows that for each $\gamma \in \Gamma$, 
there exists a function $h_{\gamma} \in \CoLone$ with $\| h_{\gamma} \|_{\CoLone} \lesssim 1$, 
and $V_g h_{\gamma} (\gamma) = 1$ and $V_g h_{\gamma} \equiv 0$ on $\Gamma \setminus \{\gamma\}$. 
Thus, for a fixed $c \in \ell^1 (\Gamma)$, defining $f \in \CoLone$ by
\[ f := \sum_{\gamma \in \Gamma} c_{\gamma} h_{\gamma}  \]
gives $C_{g, \Gamma} f = c$, which shows that $C_{g, \Gamma} : \CoLone \to \ell^1 (\Gamma)$ is surjective. 
Consequently $C_{g, \Gamma}^* : \ell^{\infty} (\Gamma) \to \CoLinfty$ is bounded below, 
and $\pi (\Gamma) g$ forms an $\infty$-Riesz sequence in $\CoLinfty$, 
and hence a $p$-Riesz sequence in $\CoLp$ by Theorem \ref{thm:frame_Riesz_universal}. 
\end{proof}

\subsection{Stability under dilations} 
In this section we prove the stability of coherent frames and Riesz sequences under 
dilations. For this, we use the following relation between dilations and weak limits of translates. 
 
\begin{lemma} \label{lem:dilation_subsequence}
Let $G$ be a homogeneous group. 
Let $\Lambda \subset G$ be relatively separated, let $\{x_n\}_{n \in
  \mathbb{N}}\subseteq G$ be arbitrary and let $\{r_n\}_{n \in \mathbb{N}} \subseteq \mathbb{R}^+$  be such that
$\lim _{n\to \infty } r_n = 1$. Then there exists a subsequence of $
\{x_{n}^{-1}  D_{r_{n}}(\Lambda)\}_{n \in \mathbb{N}}$ that 
 converges weakly to a relatively separated set $\Gamma \subset G$. 
Moreover, for any sequence $x_n^{-1} D_{r_n} (\Lambda) \xrightarrow{w} \Gamma$, 
the limit $\Gamma \in W(\Lambda)$. 
\end{lemma}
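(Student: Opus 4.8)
The plan is to establish two things: first, that a subsequence of $\{x_n^{-1} D_{r_n}(\Lambda)\}_{n}$ converges weakly to some relatively separated set $\Gamma$; and second, that any such weak limit lies in $W(\Lambda)$. I would begin with a preliminary observation that the dilated sets are uniformly relatively separated. Since $\Lambda$ is relatively separated, $D^+(\Lambda) < \infty$, and the scaling law $D^+(D_{r_n}(\Lambda)) = r_n^{-Q} D^+(\Lambda)$ together with $r_n \to 1$ shows the quantities $\rel(D_{r_n}(\Lambda))$ are uniformly bounded; left-translation by $x_n^{-1}$ does not change this, as $\rel$ is translation-invariant. This uniform bound $\sup_n \rel(x_n^{-1} D_{r_n}(\Lambda)) =: M < \infty$ is what makes a compactness argument possible.

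For the existence of a weakly convergent subsequence, I would run the standard diagonal/compactness argument for relatively separated sets. Fix an exhaustion of $G$ by balls $B_k(e)$. Within each fixed ball, the sets $\Lambda_n := x_n^{-1} D_{r_n}(\Lambda)$ have at most $M \cdot \#\{\text{unit balls covering } B_k(e)\}$ points, a bound uniform in $n$. After passing to a subsequence, the number of points in $B_k(e)$ stabilizes, and since the points live in a relatively compact set, one extracts (by a further diagonal argument over $k$) a subsequence along which the point configurations in each $B_k(e)$ converge in the Hausdorff sense to a limiting configuration; collecting these over all $k$ defines $\Gamma$. The separation coming from $M$ guarantees that limit points do not collide and that $\Gamma$ inherits $\rel(\Gamma) \leq M$, so $\Gamma$ is relatively separated, hence closed. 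Verifying the two inclusions in \eqref{eq:weak_convergence} for this $\Gamma$ is then a matter of unwinding the Hausdorff convergence: a point of $\Gamma$ in $B_R(e)$ is by construction a limit of points of $\Lambda_n$, so it is eventually $\varepsilon$-close to $\Lambda_n$; conversely a point of $\Lambda_n$ in $B_R(e)$ either converges to a point of $\Gamma$ or would create an unaccounted limit point, again using the uniform separation.

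For the second assertion, that $\Gamma \in W(\Lambda)$, the key point is to absorb the dilation into the translation. Here I would compare $\Lambda_n = x_n^{-1} D_{r_n}(\Lambda)$ with the undilated translate $x_n^{-1}\Lambda$ and argue that, because $r_n \to 1$, the dilation acts as a vanishing perturbation on any fixed ball. Concretely, on the relatively compact set $B_R(e)$ the map $D_{r_n}$ converges uniformly to the identity as $r_n \to 1$ (the dilations depend continuously on $r$ via $\exp(A \ln r)$, uniformly on compacta), so $D_{r_n}(\lambda)$ and $\lambda$ differ by less than any prescribed $\varepsilon$ in homogeneous norm once $\lambda$ ranges over a compact set and $n$ is large. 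This lets me show that $x_n^{-1} D_{r_n}(\Lambda) \xrightarrow{w} \Gamma$ forces $x_n^{-1}\Lambda \xrightarrow{w} \Gamma$ as well, by splicing the two weak-convergence estimates through the triangle inequality for the homogeneous metric; hence $\Gamma$ is a genuine weak limit of translates of $\Lambda$, i.e.\ $\Gamma \in W(\Lambda)$.

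I expect the main obstacle to be the uniform-continuity estimate controlling the dilation on large balls. The radius $R$ over which I need $D_{r_n} \approx \mathrm{id}$ grows, while the set $D_{r_n}(\Lambda) \cap B_R(e)$ pulls back to $\Lambda \cap B_{R/r_n}(x_n)$, an unbounded region; so I must be careful that the displacement $|(\lambda)^{-1} D_{r_n}(\lambda)|_G$ stays small uniformly for all $\lambda$ in the relevant range, not merely on a single fixed compact set. The homogeneity property $|D_{r_n}(\lambda)|_G = r_n|\lambda|_G$ is the tool that tames this: it gives exact control of how the norm scales, and combined with the lowest-weight normalization it should yield the displacement bound I need on the growing balls. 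The remaining verifications — uniform relative separation, the diagonal extraction, and the metric-triangle splicing — are routine given the structure of the homogeneous metric and the finiteness of $\rel$.
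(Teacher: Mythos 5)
Your argument for the second assertion---that $\Gamma \in W(\Lambda)$---has a genuine gap, and it is exactly at the point you flag in your last paragraph: the comparison of $x_n^{-1}D_{r_n}(\Lambda)$ with the \emph{undilated} translate $x_n^{-1}\Lambda$ fails, and homogeneity cannot rescue it. Weak convergence is tested on a fixed ball $B_R(e)$, but the points of $x_n^{-1}D_{r_n}(\Lambda)$ landing there arise from $\lambda \in \Lambda$ with $|\lambda|_G \approx |x_n|_G$, and by left-invariance the relevant displacement is $|\lambda^{-1}D_{r_n}(\lambda)|_G$, which is of order $|r_n-1|\,|\lambda|_G \approx |r_n-1|\,|x_n|_G$. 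Since $\{x_n\}$ is an \emph{arbitrary} sequence, this product need not tend to $0$. The identity $|D_{r_n}(\lambda)|_G = r_n|\lambda|_G$ controls the distance of $D_{r_n}(\lambda)$ to the origin, not its distance to $\lambda$, so the "lowest-weight normalization" gives you nothing here. In fact your claim that $x_n^{-1}D_{r_n}(\Lambda) \xrightarrow{w} \Gamma$ forces $x_n^{-1}\Lambda \xrightarrow{w} \Gamma$ is simply false: take $G = \mathbb{R}$, $\Lambda = \mathbb{Z}$, $\alpha \in (0,1)$, $r_n = 1 + \alpha/n$ and $x_n = n + \alpha + \tfrac12$. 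Writing $m = n+j$, one computes $r_n m - x_n = j - \tfrac12 + \alpha j/n$, so $x_n^{-1}D_{r_n}(\Lambda) = r_n\mathbb{Z} - x_n \xrightarrow{w} \mathbb{Z} - \tfrac12$, whereas $x_n^{-1}\Lambda = \mathbb{Z} - x_n = \mathbb{Z} - \alpha - \tfrac12$ for every $n$. The two weak limits differ by $\alpha$; both happen to lie in $W(\Lambda)$, but your triangle-inequality splicing cannot establish this, because the two sequences you are splicing genuinely drift apart.

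The repair is to absorb the dilation into a \emph{modified} translation: factor $x_n^{-1}D_{r_n}(\Lambda) = D_{r_n}\bigl(D_{r_n}^{-1}(x_n^{-1})\,\Lambda\bigr)$, pass to a subsequence along which the pure translates $D_{r_n}^{-1}(x_n^{-1})\Lambda$ converge weakly to some $\Gamma'$, and then prove $\Gamma = \Gamma'$. This works because when verifying \eqref{eq:weak_convergence} the outer dilation $D_{r_n}$ now acts only on points of a fixed ball $B_R(e)$, where $D_{r_n} \to \id$ uniformly, so the two sequences interleave within any $\varepsilon$ on every fixed ball and hence share their weak limits; this is precisely how the paper argues (in the example above, $x_n/r_n = n + \tfrac12 + O(1/n)$, consistent with the limit $\mathbb{Z} - \tfrac12$). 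Two smaller points. First, your derivation of uniform relative separation from $D^+$ is invalid: $D^+$ is blind to finite clusters (a cluster of $k$ points in a single unit ball contributes $k/\mu_G(B_R(e)) \to 0$ as $R \to \infty$), so a uniform bound on $D^+(\Lambda_n)$ does not bound $\rel(\Lambda_n)$. Argue directly instead: $D_{r}^{-1}(B_1(x)) = B_{1/r}\bigl(D_{1/r}(x)\bigr)$ is covered by a bounded number of unit balls uniformly for $r \in [1/2,2]$, whence $\rel\bigl(x_n^{-1}D_{r_n}(\Lambda)\bigr) = \rel\bigl(D_{r_n}(\Lambda)\bigr) \lesssim \rel(\Lambda)$. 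Second, your diagonal/Hausdorff extraction for the existence of a weakly convergent subsequence is a legitimate alternative to the paper's measure-theoretic route (weak$^*$ compactness of the Dirac combs $\Sha_{\Lambda_n}$ in $\WLM(G) = (\WLC(G))'$, with $\Gamma = \supp \mu$), modulo the usual care with points drifting across the boundaries of the exhausting balls---but the existence half is not where the difficulty of the lemma lies.
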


\begin{proof}
Throughout the proof, we define $\Lambda _n := D_{r_n}(\Lambda ) $ for $n \in \mathbb{N}$. 
The proof is divided into two steps: 

\textbf{Step 1.} \emph{(Existence of a subsequence).}
 We use 
the uniform relative separation 
\[
\limsup_{n \in \mathbb{N}} \rel(x_n^{-1} \Lambda_n ) = \limsup_{n \in \mathbb{N}} \rel(\Lambda_n) < \infty, 
\] 
and  the norm equivalence $\| \Sha_{x_n^{-1} \Lambda_n} \|_{\WLM}
\asymp \rel(\Lambda_n)$, and obtain the existence of a subsequence $\{
\Sha_{x_{n_k}^{-1} \Lambda_{n_k}} \}_{k \in \mathbb{N}}$  
that converges to an element $\mu \in \WLM (G)$ in the weak$^*$-topology
$\sigma(\WLM(G), \WLC(G))$. 
Since $\Sha_{x_n^{-1} \Lambda_n} (E) \in \mathbb{N} \cup \{\infty\}$ for any Borel set $E \in \mathcal{B}(G)$, 
the convergence $\Sha_{x_n^{-1} \Lambda_{n_k}} \to \mu$ yields that 
\[
x_n^{-1} \Lambda_{n_k} = \supp(\Sha_{x_n^{-1} \Lambda_{n_k}}) \xrightarrow{w} \supp(\mu) =: \Gamma,
\] 
See \cite[Section 4]{groechenig2015deformation} for details.
The set $\Gamma$ is relatively separated, and hence closed. 

\textbf{Step 2.} \emph{(Weak limit of translates)}. 
By the above, there exists a sequence $\{ x_n^{-1} \Lambda_n \}_{n \in \mathbb{N}}$ 
converging weakly to a relatively separated set $\Gamma \subset G$. 
For $n \in \mathbb{N}$, write 
\[ x_n^{-1} \Lambda_n = D_{r_n} \big((D_{r_n}^{-1} (x_n^{-1})) \Lambda\big). \]
By passing to a subsequence, it may be assumed that 
$D_{r_n}^{-1}(x_n^{-1}) \Lambda \xrightarrow{w} \Gamma'$. 
We claim that $\Gamma = \Gamma'$. 

For the inclusion $\Gamma' \subseteq \Gamma$, 
let $R > 0$ and $\varepsilon \in (0,1]$, and
take a $z \in D_{r_n}^{-1} (x_n^{-1}) \Lambda \cap B_R (e)$. 
Then $z = D_{r_n}^{-1}(x_n^{-1}) \lambda$ for some $\lambda \in \Lambda$ 
and $|D_{r_n}^{-1} (x_n^{-1})\lambda|_G \leq R$. 
Choose $n_0 \in \mathbb{N}$ such that 
$|x_n^{-1} D_{r_n} (\lambda) z^{-1} |_G = |D_{r_n}^{-1}(x_n^{-1}) \lambda (D_{r_n} (\lambda))^{-1} x_n |_G < \varepsilon$ for all $n \geq n_0$.
Then, if $n \geq n_0$, it follows that 
$z^{-1} \in (D_{r_n} (\lambda))^{-1} x_n B_{\varepsilon} (e)$,
and hence 
\[
D_{r_n}^{-1}(x_n^{-1}) \Lambda \cap B_R (e) \subseteq   B_{\varepsilon} (e) x_n^{-1} \Lambda_n. 
\]
Using that $D_{r_n}^{-1}(x_n^{-1}) \Lambda \xrightarrow{w} \Gamma'$ 
and $x_n^{-1} \Lambda_n \xrightarrow{w} \Gamma$, 
it follows that $\Gamma' \subseteq \Gamma$. 

For the converse, 
let again $R > 0$ and $\varepsilon \in (0,1]$. 
Take an arbitrary $z \in x_n^{-1} \Lambda_n \cap B_R (e)$. 
Then $z = x_n^{-1} D_{r_n} (\lambda)$ for some $\lambda \in \Lambda$ 
and $|x_n^{-1} D_{r_n} (\lambda) |_G \leq R$. 
Hence there exists an $R' > 0$ such that $|D_{r_n}^{-1}z|_G=  |D_{r_n}^{-1} (x_n^{-1}) \lambda|_G \leq R'$. 
Consequently, we can choose an $n_0 \in \mathbb{N}$ such that 
$|D_{r_n}^{-1} (x_n^{-1}) \lambda z^{-1} |_G = |D_{r_n}^{-1} (x_n^{-1}
) \lambda D_{r_n} (\lambda)^{-1} x_n  |_G < \epsilon$ for all $n \geq n_0$. 
Thus, if $n \geq n_0$, then $z^{-1} \in \lambda^{-1} D_{r_n}^{-1} (x_n) B_{\epsilon} (e)$, 
which shows that
\[
x_n^{-1} \Lambda_n \cap B_R (e) \subseteq B_{\epsilon} (e) D^{-1}_{r_n} (x_n^{-1}) \Lambda.
\]
Since $x_n^{-1} \Lambda_n \xrightarrow{w} \Gamma$ and 
$D_{r_n}^{-1} (x_n^{-1}) \Lambda \xrightarrow{w} \Gamma'$, it follows that 
$\Gamma \subseteq \Gamma'$. 

Combining the obtained inclusions yields $\Gamma = \Gamma'$, 
and thus $\Gamma \in W(\Lambda)$. 
\end{proof}

The following result shows the stability of coherent
frames  with respect to dilations. 

\begin{theorem} \label{thm:frame_stability} 
Let $(\pi, \Hpi)$ be a 
projective relative discrete series representation of a homogeneous group $G$. 
Let $g \in \Hsmooth$ and let $\Lambda \subset G$ be a discrete set. 
Let $\{\Lambda_n\}_{n \in \mathbb{N}} = \{D_{r_n} (\Lambda) \}_{n \in \mathbb{N}}$
for some $\{r_n \}_{n \in \mathbb{N}} \subset \mathbb{R}^+$ with $r_n \to 1$. 
If $\CS$ forms a frame for $\Hpi$,
then $\pi (\Lambda_n) g$ forms a frame for $\Hpi$ for all sufficiently large $n$. 
\end{theorem}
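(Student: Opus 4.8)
The plan is to argue by contradiction, reducing the frame property to a lower bound for the coefficient operator in the $\infty$-coorbit setting, where near-null vectors are automatically concentrated and hence amenable to a weak-limit argument. First I would record that the systems $\pi(\Lambda_n)g$ are \emph{uniformly Bessel}: since $r_n \to 1$, the dilates $\Lambda_n = D_{r_n}(\Lambda)$ are relatively separated with $\sup_n \rel(\Lambda_n) < \infty$, and as $g \in \Hsmooth \subseteq \Bpi$ the bound $\|C_{g,\Lambda_n}f\|_{\ell^\infty} \lesssim \rel(\Lambda_n)\|f\|_{\CoLinfty}$ holds with a constant uniform in $n$. By the universality theorem (Theorem~\ref{thm:frame_Riesz_universal}) it suffices to prove that $\pi(\Lambda_n)g$ is an $\infty$-frame for $\CoLinfty$ for all large $n$; in view of the uniform Bessel bound, this amounts to establishing a uniform lower bound $\|C_{g,\Lambda_n}f\|_{\ell^\infty} \gtrsim \|f\|_{\CoLinfty}$.

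Suppose this fails. Then, along a subsequence (not relabelled), there are $f_n \in \CoLinfty$ with $\|V_g f_n\|_{\infty} = 1$ and $\sup_{\mu \in \Lambda_n} |V_g f_n(\mu)| = \|C_{g,\Lambda_n} f_n\|_{\ell^\infty} \to 0$. \textbf{The crux of the proof is to force a nonzero weak-$*$ limit}, and this is exactly what the $L^\infty$-normalization buys us: since $V_g f_n$ is continuous (the reproducing kernel $V_g g \in \WRC(G)$ is continuous), there are points $z_n \in G$ with $|V_g f_n(z_n)| > 1/2$. I would put $\tilde f_n := \pi(z_n)^\ast f_n$ and $\Gamma_n := z_n^{-1}\Lambda_n$. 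Because $|V_g \tilde f_n(x)| = |V_g f_n(z_n x)|$ (the cocycle has modulus one), the vectors $\tilde f_n$ have unit $\CoLinfty$-norm, satisfy $|V_g \tilde f_n(e)| > 1/2$, and inherit $\sup_{\nu \in \Gamma_n} |V_g \tilde f_n(\nu)| = \sup_{\mu \in \Lambda_n} |V_g f_n(\mu)| \to 0$.

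Finally I would pass to limits. Applying Lemma~\ref{lem:dilation_subsequence} with these translates $x_n = z_n$ yields, after a further subsequence, a weak limit $\Gamma_n = z_n^{-1} D_{r_n}(\Lambda) \xrightarrow{w} \Gamma$ with $\Gamma \in W(\Lambda)$; since $\CS$ is a frame, the stability theorem for frames under weak limits of translates shows that $\pi(\Gamma)g$ is a frame, hence an $\infty$-frame for $\CoLinfty$ with some lower bound $c_\Gamma > 0$. Using $\CoLinfty = (\CoLone)'$ and that the $\tilde f_n$ are norm-bounded, Banach--Alaoglu gives a subsequence $\tilde f_n \xrightarrow{w\ast} f$ in $\sigma(\CoLinfty, \CoLone)$; testing against $\pi(x)g \in \Api = \CoLone$ shows $V_g\tilde f_n \to V_g f$ pointwise, and the uniform amalgam bounds upgrade this to locally uniform convergence (as in the proof of stability under weak limits, via \cite[Theorem~4.1]{feichtinger1989banach1}). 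Consequently $|V_g f(e)| = \lim_n |V_g \tilde f_n(e)| \ge 1/2$, so $f \neq 0$. On the other hand, for each $\gamma \in \Gamma$ the weak convergence $\Gamma_n \xrightarrow{w} \Gamma$ provides $\gamma_n \in \Gamma_n$ with $\gamma_n \to \gamma$, whence $V_g f(\gamma) = \lim_n V_g \tilde f_n(\gamma_n) = 0$, since $|V_g \tilde f_n(\gamma_n)| \le \sup_{\nu \in \Gamma_n}|V_g\tilde f_n(\nu)| \to 0$. Thus $C_{g,\Gamma} f = 0$ while $\|f\|_{\CoLinfty} \ge 1/2$, contradicting the lower $\infty$-frame bound $c_\Gamma > 0$ of $\pi(\Gamma)g$. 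This contradiction proves that $\pi(\Lambda_n)g$ is a frame for all sufficiently large $n$.

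The step I expect to be the main obstacle is the one flagged above: guaranteeing that the limiting vector $f$ is nonzero. In the Hilbert-space ($p=2$) formulation the near-null vectors can spread out and converge weakly to $0$ after any translation, so no contradiction is reached; switching to the $\infty$-frame formulation, where the normalization $\|V_g f_n\|_\infty = 1$ pins down a point of near-maximal mass that can be translated to the identity, is precisely what makes the weak-limit argument close.
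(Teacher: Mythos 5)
Your proposal is correct and follows essentially the same route as the paper's own proof: negate the (uniform) lower $\infty$-frame bound via Theorem \ref{thm:frame_Riesz_universal}, translate near-null vectors so that a point of near-maximal modulus sits at the identity, extract a weak$^*$ limit $f\neq 0$ and a weak limit $\Gamma\in W(\Lambda)$ via Lemma \ref{lem:dilation_subsequence}, and use locally uniform convergence of $V_g \tilde f_n$ to conclude $V_g f\equiv 0$ on $\Gamma$. The only differences are cosmetic: you invoke the weak-limit stability theorem in the forward direction (so that $\pi(\Gamma)g$ has a lower frame bound to contradict) while the paper uses its contrapositive, and you make explicit the uniform Bessel bound and the relative separation of $\Lambda$ (Lemma \ref{lem:neccond_geometry}) that the paper records at the outset.
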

\begin{proof}
By Lemma \ref{lem:neccond_geometry}, the set $\Lambda \subset G$ is relatively separated 
and relatively dense. 
Arguing indirectly, assume that $\pi (\Lambda_n) g$ fails to be  
a frame for sufficiently large $n$. Then, by passing to a subsequence, we
assume that $\pi (\Lambda_n) g$ fails to be a frame for all $n \in \mathbb{N}$. 
In particular, every $\pi (\Lambda_n) g$ fails to be an $\infty$-frame for $\CoLinfty$ 
by Theorem \ref{thm:frame_Riesz_universal}. 
Hence, there exists a sequence $\{ f_n \}_{n \in \mathbb{N}}$ in $\CoLinfty$ 
such that $\|V_g f_n \|_{L^{\infty}} = 1$ and
\[
\| C_{g, \Lambda_n} f_n \|_{\ell^{\infty} (\Lambda_n)} 
= \sup_{\lambda \in \Lambda} |V_g f_n (D_{r_n} (\lambda)) | \to 0 \hspace{5pt} \mbox{ as } \hspace{5pt} n \to \infty.
\]
Given $n \in \mathbb{N}$, let $x_n \in G$ be such that $|V_g f_n (x_n)| \geq 1/2$
and let $h_n := \pi(x_n^{-1}) f_n$. 
By passing to a subsequence, we assume
that $h_n \to h \in \CoLinfty$ in the $w^*$-topology $\sigma(\CoLinfty, \CoLone)$.  
Then $h \neq 0$ since $|V_g h_n (0)| = |V_g f_n (x_n)| \geq 1/2$ implies $|V_g h(0)|\geq 1/2$. 

By Lemma  \ref{lem:dilation_subsequence}, it may be assumed that 
$x_n^{-1} \Lambda_n \xrightarrow{w} \Gamma$ for 
some relatively separated set $\Gamma\in W(\Lambda)$.
Let $\gamma \in \Gamma$ be arbitrary and choose a sequence
$\{\gamma_n\}_{n \in \mathbb{N}} \subseteq \Lambda$ such that $x_n^{-1} D_{r_n} (\gamma_n) \to \gamma$ 
as $n \to \infty$. By \cite[Theorem 4.1]{feichtinger1989banach1}, 
the convergence $V_g h_n \to V_g h$ is uniform on compact subsets of $G$, so
\[
|V_g h(\gamma)| = \lim_{n \to \infty} |V_g h_n (x_n^{-1} D_{r_n} (\gamma_n)) |
= \lim_{n \to \infty} |V_g f_n (D_{r_n} (\gamma_n))| = 0. 
\]
Since $\gamma \in \Gamma$ was arbitrary, this implies that
 $V_g h \equiv 0$ on $\Gamma \in W(\Lambda)$. 
Thus, $\CS$ does not form an $\infty$-frame for $\CoLinfty$ and, by Theorem \ref{thm:frame_Riesz_universal}, 
neither for $\Hpi$, which is a contradiction.
\end{proof}

The following result is the analogue of Theorem \ref{thm:frame_stability} for Riesz sequences. 

\begin{theorem} \label{thm:riesz_stability}
Let $(\pi, \Hpi)$ be a 
projective relative discrete series representation of a homogeneous group $G$. 
Let $g \in \Hsmooth$ and let $\Lambda \subset G$ be a discrete set.
Let $\{\Lambda_n\}_{n \in \mathbb{N}} = \{D_{r_n} (\Lambda) \}_{n \in \mathbb{N}}$
for some $\{r_n \}_{n \in \mathbb{N}} \subset \mathbb{R}^+$ with $r_n \to 1$. 
If $\CS$ forms a Riesz sequence in $\Hpi$, 
then $\pi (\Lambda_n) g$ forms a Riesz sequence in $\Hpi$ for all sufficiently large $n$. 
\end{theorem}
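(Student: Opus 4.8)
The plan is to follow the proof of Theorem~\ref{thm:frame_stability} almost line by line, but with the analysis operator replaced by the synthesis operator and with the offending coefficient sequences encoded as translation-bounded measures. By Lemma~\ref{lem:neccond_geometry}(ii) the set $\Lambda$ is separated. Arguing by contradiction and passing to a subsequence, I would assume that $\pi(\Lambda_n)g$ fails to be a Riesz sequence for every $n$; by the universality result Theorem~\ref{thm:frame_Riesz_universal}(ii) this is equivalent to each $\pi(\Lambda_n)g$ failing to be an $\infty$-Riesz sequence in $\CoLinfty$, i.e. to the synthesis map $D_{g,\Lambda_n}\colon\ell^\infty(\Lambda_n)\to\CoLinfty$ not being bounded below. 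Hence there are coefficients $c^{(n)}\in\ell^\infty(\Lambda_n)$ with $\|c^{(n)}\|_{\ell^\infty}=1$ and $\|D_{g,\Lambda_n}c^{(n)}\|_{\CoLinfty}\to 0$.

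Next I would normalize the location of a near-extremal coefficient. Picking $\lambda_n\in\Lambda_n$ with $|c^{(n)}_{\lambda_n}|\ge\tfrac12$ and setting $x_n:=\lambda_n$, the translated set $x_n^{-1}\Lambda_n$ contains the identity $e$. Applying the unitary $\pi(x_n^{-1})$ and absorbing the resulting cocycle into the coefficients produces transported coefficients $\tilde c^{(n)}$ on $x_n^{-1}\Lambda_n$ with $\|\tilde c^{(n)}\|_{\ell^\infty}=1$, $|\tilde c^{(n)}_e|\ge\tfrac12$, and $\|D_{g,x_n^{-1}\Lambda_n}\tilde c^{(n)}\|_{\CoLinfty}=\|D_{g,\Lambda_n}c^{(n)}\|_{\CoLinfty}\to 0$. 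I would then encode these as the measures $\mu_n:=\sum_{\lambda\in x_n^{-1}\Lambda_n}\tilde c^{(n)}_\lambda\delta_\lambda\in\WLM(G)$, whose norms are uniformly bounded by $\|\tilde c^{(n)}\|_{\ell^\infty}\rel(\Lambda_n)\lesssim 1$. By Lemma~\ref{lem:dilation_subsequence}, applied to the sequences $\{x_n\}$ and $\{r_n\}$, I may pass to a subsequence with $x_n^{-1}\Lambda_n\xrightarrow{w}\Gamma$ for some separated $\Gamma\in W(\Lambda)$, and by Banach--Alaoglu I may further assume $\mu_n\to\mu$ in $\sigma(\WLM(G),\WLC(G))$. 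The weak set-convergence forces $\supp(\mu)\subseteq\Gamma$, so $\mu=\sum_{\gamma\in\Gamma}c_\gamma\delta_\gamma$ with $\|c\|_{\ell^\infty}\le\|\mu\|_{\WLM}\lesssim 1$. Pairing $\mu_n$ with a continuous function supported in a small ball $B_\rho(e)$, $\rho<\tfrac14\sep(\Lambda)$, and equal to $1$ at $e$ isolates the mass at the identity for large $n$ (since $\sep(x_n^{-1}\Lambda_n)=r_n\sep(\Lambda)\to\sep(\Lambda)>0$), yielding $|c_e|\ge\tfrac12$ after a further subsequence, so $c\ne 0$.

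It then remains to prove $D_{g,\Gamma}c=0$. For fixed $x\in G$ the function $\phi_x\colon y\mapsto\langle\pi(y)g,\pi(x)g\rangle$ lies in the predual $\WLC(G)$: indeed $g\in\Hsmooth\subseteq\Bpi$ gives $V_g g\in\WRC(G)$, and $\phi_x$ is obtained from $V_g g$ by reflection and left translation, operations that map $\WRC(G)$ into $\WLC(G)$. Consequently $V_g\big(D_{g,x_n^{-1}\Lambda_n}\tilde c^{(n)}\big)(x)=\langle\mu_n,\phi_x\rangle\to\langle\mu,\phi_x\rangle=V_g(D_{g,\Gamma}c)(x)$, while the left-hand side tends to $0$ because $\|D_{g,x_n^{-1}\Lambda_n}\tilde c^{(n)}\|_{\CoLinfty}\to 0$. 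Hence $V_g(D_{g,\Gamma}c)\equiv 0$, and since $V_g$ is injective on $\CoLinfty$ we obtain $D_{g,\Gamma}c=0$ with $c\ne 0$ and $\|c\|_{\ell^\infty}\lesssim 1$. Thus $\pi(\Gamma)g$ is not an $\infty$-Riesz sequence; applying the stability of Riesz sequences under weak limits of translates proved above (in contrapositive form, to $\Gamma\in W(\Lambda)$) contradicts the assumption that $\CS$ is a Riesz sequence, completing the argument.

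The step I expect to be the main obstacle is the limiting argument for the coefficient sequences over the varying index sets $x_n^{-1}\Lambda_n$, namely identifying the weak-$*$ limit $\mu$ as a genuine, nonzero coefficient sequence supported precisely on $\Gamma$ while simultaneously transferring the asymptotic vanishing of the synthesis to the limit. Both halves rest on the off-diagonal decay built into $g\in\Bpi$, which is exactly what places the test functions $\phi_x$ in the predual $\WLC(G)$ so that $\sigma(\WLM,\WLC)$-convergence may be paired against them; the uniform lower bound $\sep(x_n^{-1}\Lambda_n)\to\sep(\Lambda)>0$ is what keeps the mass at the identity from escaping, ensuring $c\ne 0$.
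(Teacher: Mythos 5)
Your proposal is correct and follows essentially the same route as the paper's proof: contradiction via universality (Theorem \ref{thm:frame_Riesz_universal}), encoding the offending $\ell^\infty$ coefficients as measures in $\WLM(G)$, extracting weak-$*$ and weak set limits via Banach--Alaoglu and Lemma \ref{lem:dilation_subsequence}, isolating mass at a near-extremal point with a bump function to get $c \neq 0$, pairing against $V_g\pi(x)g \in \WLC(G)$ to transfer the vanishing synthesis to the limit, and concluding via the weak-limit stability theorem for Riesz sequences. The only cosmetic difference is that you translate by the unitary $\pi(x_n^{-1})$ and absorb the cocycle into the coefficients, whereas the paper keeps the cocycle factors $\theta_{\lambda,n}$ explicitly inside the measures $\mu_n$; these are equivalent.
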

\begin{proof}
The set $\Lambda \subset G$ is separated by Lemma \ref{lem:neccond_geometry}. 
Arguing indirectly, assume that  $\pi(\Lambda_n) g$ 
fails to be a Riesz sequence for all $n \in \mathbb{N}$. 
Then there exist sequences $c^{(n)} \in \ell^{\infty} (\Lambda_n)$ 
with $\| c^{(n)} \|_{\ell^{\infty}} = 1$ and such that
\[
\|D_{g, \Lambda_n} c^{(n)} \|_{\Co (L^{\infty})}
= \bigg\| \sum_{\lambda \in \Lambda_n} c^{(n)}_\lambda \pi (\lambda) g \bigg\|_{\CoLinfty} 
\to 0  \quad \text{ as } \quad n \to \infty.
\]
We will next construct a subsequence $(n_k)_{k \in \mathbb{N}} \subseteq \mathbb{N}$, 
corresponding points $\lambda_{n_k} \in \Lambda_{n_k}$, a separated set $\Gamma \subseteq G$
and a non-zero $c \in \ell^{\infty} (\Gamma)$ such that
$
\lambda_{n_k}^{-1} \Lambda_{n_k}  \xrightarrow{w} \Gamma
$
as $k \to \infty$, and 
\begin{align} \label{eq:linear_independence}
\sum_{\gamma \in \Gamma} c_{\gamma} \pi (\gamma) g = 0.
\end{align}

For $n \in \mathbb{N}$,  choose a point $\lambda_n \in \Lambda_n$ 
such that $|c_{\lambda_n}^{(n)} | \geq 1/2$. 
Write $\theta_{\lambda, n} \in \mathbb{T}$ for the cocycle satisfying
$\theta_{\lambda, n} \pi(\lambda_n^{-1} \lambda) = \pi(\lambda_n^{-1}) \pi(\lambda)$.
Define the measure
$\mu_n := \sum_{\lambda \in \Lambda_n} \theta_{\lambda, n} c^{(n)}_{\lambda} \delta_{\lambda^{-1}_n \lambda}$. 
Since $|\mu_n| \leq \|c^{(n)} \|_{\ell^{\infty}} |\Sha_{\lambda_n^{-1} \Lambda_n}|$,
it follows that $\| \mu_n \|_{\WLM} \lesssim \rel(\Lambda_n) \|c^{(n)} \|_{\ell^{\infty}} \lesssim 1$. 
By passing to a subsequence, we may assume that
$\{ \mu_n \}_{n \in \mathbb{N}}$ converges to some $\mu \in \WLM(G)$ in $\sigma \bigl( \WLM(G), \WLC(G) \bigr)$. 
In addition, we may assume that $\lambda_n^{-1} \Lambda_n^{-1} \xrightarrow{w} \Gamma$ for some relatively separated set $\Gamma$, 
which necessarily satisfies $\supp(\mu) \subseteq \Gamma$
- see \cite[Section 4]{groechenig2015deformation} for similar claims.
It follows that $\mu = \sum_{\gamma \in \Gamma} c_{\gamma} \delta_{\gamma}$ 
for some $c \in \ell^{\infty} (\Gamma)$. To see that $c \neq 0$, let 
$ r := \inf_{n \in \mathbb{N}} \sep(\Lambda_n). $
Then $r > 0$ since $r_n \to 1$ and $\sep(\Lambda) > 0$ by assumption. 
Hence
$B_{r/2} (\lambda_n) \cap \Lambda_n = \{\lambda_n\}$ for $n \in \mathbb{N}$. 
Choosing a bump function $\varphi \in C(G)$ supported on $B_{r/2} (e)$ with $\varphi (e) = 1$
gives
\[
\bigg| \int_G \varphi \; d\mu \bigg| = \lim_{n \to \infty} \bigg| \int_G \varphi \; d\mu_n \bigg|
= \lim_{n \to \infty} |c_{\lambda_n}^{(n)} | \geq 1/2,
\]
thus $c \neq 0$, as claimed. 
Lastly, to show \eqref{eq:linear_independence}, it
suffices to show that $V_g \big(\sum_{\gamma \in \Gamma} c_{\gamma} \pi (\gamma) g \big) = 0$. 
But since $V_g \pi (x) g \in \WLC(G)$ for any $x \in G$, it follows that 
\begin{align*}
\bigg| \bigg\langle \sum_{\gamma \in \Gamma} c_\gamma \pi(\gamma) g, \pi(x) g \bigg\rangle \bigg|
&= \bigg| \sum_{\gamma \in \Gamma} c_{\gamma} \overline{V_g \pi(x) g(\gamma) } \bigg|
= \lim_{n \to \infty} \bigg| \int_G \overline{V_g \pi(x) g} \; d\mu_n \bigg| \\
&= \lim_{n \to \infty} \bigg| \bigg \langle \sum_{\lambda \in \Lambda_n} \theta_{\lambda, n} c_{\lambda}^{(n)} \pi(\lambda_n^{-1} \lambda) g, \pi(x) g \bigg\rangle \bigg| \\
&\leq \lim_{n \to \infty} \bigg\| \pi(\lambda_n^{-1}) \sum_{\lambda \in \Lambda_n} c_{\lambda}^n \pi (\lambda) g \bigg\|_{\Co (L^{\infty}(G))} \| g \|_{\Co (L^1(G))} = 0
\end{align*}
for all $x \in G$. This shows \eqref{eq:linear_independence}. 

Since $\Gamma \in W(\Lambda)$ by Lemma \ref{lem:dilation_subsequence}, 
it follows that $\CS$ does not form a Riesz sequence, 
which contradicts the assumption. 
\end{proof}

\section{Balian-Low type theorems}
In this section we obtain Balian-Low type theorems for coherent systems 
forming a frame or Riesz sequence.

\subsection{Necessary density conditions} 

The following necessary density condition for frames and Riesz sequences, with respect to the homogeneous Beurling density, are well-known, e.g.,  see \cite[Theorem 5.3]{fuehr2017density}. 

\begin{theorem} \label{thm:nonstrict_density}
Let $(\pi, \Hpi)$ be a 
projective relative discrete series representation of a homogeneous group $G$. 
Let $g \in \Bpi$ and let $\Lambda \subset G$ be a discrete set. 
\begin{enumerate}[(i)]
\item If $\CS$ forms a frame for $\Hpi$, then $D^- (\Lambda) \geq d_{\pi}$.
\item If $\CS$ forms a Riesz sequence in $\Hpi$, then $D^+ (\Lambda) \leq d_{\pi}$. 
\end{enumerate}
\end{theorem}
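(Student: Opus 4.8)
The plan is to derive the necessary (non-strict) density conditions from the orthogonality relations together with the boundedness of the coefficient and reconstruction operators established in Section~\ref{sec:coherent}. The key quantitative engine is a comparison, on large balls $B_R(x)$, between the number of points $\#(\Lambda \cap B_R(x))$ and a trace-type average built from the frame (respectively Riesz) inequalities and the formal degree $d_\pi$. Concretely, for a frame $\CS$ with frame operator $S = D_{g,\Lambda} C_{g,\Lambda}$, I would localize the problem: fix a large radius $R$, restrict attention to the points $\lambda \in \Lambda \cap B_R(x)$, and compare the associated partial frame operator against the reproducing formula coming from \eqref{ORs}. The orthogonality relations say that the continuous system $\{\pi(\xdot)g : \xdot \in G\}$ resolves the identity with density $d_\pi$, so heuristically each frame vector $\pi(\lambda)g$ ``occupies'' a phase-space cell of mass $d_\pi^{-1}$, and a frame must fill space at least at this rate, forcing $D^-(\Lambda) \ge d_\pi$.

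The main steps I would carry out are as follows. First, I would reduce to $g \in \Bpi$ with $V_g g \in \WRC(G)$, so that the off-diagonal decay of the Gramian is controlled; the integrability hypothesis ensures $\WLM$-bounds on $\Sha_\Lambda$ and hence uniform control of the local operators. Second, following the Ramanathan--Steger / comparison-of-dimensions technique (as adapted to this coorbit setting in \cite{fuehr2017density}), I would estimate, for a fixed cut-off region $B_R(x)$, the trace of the compression $P_R S_R P_R$ of the localized frame operator to the finite-dimensional model and show that, up to boundary terms controlled by $\rel(\Lambda)$ and the decay of $V_g g$, one has $\#(\Lambda \cap B_R(x)) \gtrsim d_\pi\, \mu_G(B_R(e))\,(1 - o(1))$ as $R \to \infty$. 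Dividing by $\mu_G(B_R(e))$ and taking $\liminf_{R\to\infty}\inf_{x\in G}$ yields $D^-(\Lambda) \ge d_\pi$. For the Riesz sequence case (ii), the argument is dual: the lower Riesz bound controls $\|D_{g,\Lambda}c\|$ from below, so the localized Gramian is uniformly invertible on $\ell^2(\Lambda \cap B_R(x))$, and a trace comparison in the opposite direction gives $\#(\Lambda \cap B_R(x)) \lesssim d_\pi\,\mu_G(B_R(e))\,(1+o(1))$, hence $D^+(\Lambda) \le d_\pi$.

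The step I expect to be the main obstacle is the boundary-term control in the trace comparison: when one compresses the frame operator to a ball $B_R(x)$, the points of $\Lambda$ near the boundary interact (through the tails of $V_g g$) with points outside the ball, and these cross terms must be shown to contribute only $o(\mu_G(B_R(e)))$ as $R\to\infty$. On a homogeneous group the relevant fact is that the boundary layer $B_{R}(x)\setminus B_{R-\rho}(x)$ has measure $o(\mu_G(B_R(e)))$ relative to the bulk, which follows from the dilation-covariance $\mu_G(B_R(e)) = R^Q \mu_G(B_1(e))$ and the subadditivity of the homogeneous norm; combined with the $L^1$-decay of $V_g g$ (valid precisely because $g \in \Bpi$, and ultimately because the integrability hypothesis of Theorem~\ref{thm:balianlow_homogeneous} gives $V_g g \in \WLst$ via \eqref{eq_conv_rel}), this makes the cross terms summable and asymptotically negligible. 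Since this theorem only asserts the non-strict inequalities and is explicitly attributed to \cite[Theorem 5.3]{fuehr2017density}, I would in fact cite that reference for the technical heart and merely indicate how the homogeneous-group geometry (the density definitions, dilation scaling of $\mu_G$, and the amalgam-space bounds from Section~\ref{sec:preliminaries}) supplies the hypotheses needed to invoke it.
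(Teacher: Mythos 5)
Your proposal is correct and takes essentially the same route as the paper: the paper likewise defers the technical core to \cite{fuehr2017density}, applying its abstract RKHS sampling result (Corollary 4.1 there) to the metric measure space $(G,d_G,\mu_G)$ and the reproducing kernel Hilbert space $V_g(\Hpi)$, with the geometric hypotheses checked via the homogeneity of $\mu_G$ and the kernel's homogeneous approximation property supplied by \cite{groechenig2008homogeneous}. Your trace-comparison sketch and ``boundary-term control'' are exactly the content of that cited machinery (the boundary control is the homogeneous approximation property), the only cosmetic difference being that the paper invokes the abstract Corollary 4.1 rather than Theorem 5.3, which also sidesteps any issue with the projectivity of $\pi$ and with which metric defines the density.
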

\begin{proof}
The result follows from \cite[Corollary 4.1]{fuehr2017density}
applied to the metric measure space $(G, d_G, \mu_G)$ and the 
reproducing kernel Hilbert space
$
V_g (\Hpi) := \big\{ V_g f \; : \; f \in \Hpi \big\}. 
$
The hypotheses of \cite[Corollary 4.1]{fuehr2017density} 
on the metric measure space $(G, d_G, \mu_G)$ are easily verified, 
using the homogeneity of the Haar measure $\mu_G$,
and the reproducing kernel satisfies the homogeneous approximation property by 
\cite{groechenig2008homogeneous}. 
\end{proof}

\begin{remark}
\begin{enumerate}[(a)]
\item Both the formal dimension $d_{\pi}$ and the densities $D^+, D^-$ depend on 
the choice of the Haar measure $\mu_G$, but the quotients $D^+(\Lambda) / d_{\pi}$ 
and $D^-(\Lambda) / d_{\pi}$ 
do not. 
\item The densities $D^+, D^-$ do not depend on the choice of the homogeneous norm 
as can be shown by adapting the original arguments by Landau \cite{landau1967necessary}. 
See \cite{hoefler2014necessary} for the details. 
\item The formal dimension of a $\pi \in \SImodZ$ can be computed explicitly 
by use of the Pfaffian polynomials. The interested reader is referred to 
\cite{moore1973square, corwin1990representations}
for the details. 
\end{enumerate}
\end{remark}

\subsection{Strict  density inequalities}
In order to prove Theorem \ref{thm:balianlow_homogeneous}, 
we will use the following two auxillary results. 

\begin{lemma} \label{lem:smooth_dense}
The space of smooth vectors $\Hsmooth$ is norm dense in $\Co(L^1 (G))$. 
\end{lemma}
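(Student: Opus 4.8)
The plan is to exhibit every $f \in \Co(L^1(G))$ as an $\Hone$-limit of G\aa rding vectors, which are smooth by the Dixmier--Malliavin description of $\Hsmooth$ recalled above. First I would record the trivial inclusion $\Hsmooth \subseteq \Co(L^1(G))$: for $g, h \in \Hsmooth$ the coefficient $\bracket{h}{\pi(\cdot) g}$ lies in $\mathcal{S}(G) \subseteq L^1(G)$, so every smooth vector already belongs to $\Api = \Hone$, and the statement is genuinely about density in the $\Hone$-norm $\|f\|_{\Hone} = \|V_g f\|_{L^1}$.

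The core is an approximate-identity argument carried out on the voice-transform side. Fix $g \in \Api \setminus \{0\}$. For $k \in C_c^{\infty}(N)$ and $f \in \Hone \subseteq \Hpi$, the G\aa rding vector $\pi(k) f$ is smooth. Writing $\pi(z) = \chi(z) I$ for $z \in Z(N)$ with the central character $\chi$, and factoring the defining integral over $N$ through $Z(N)$, I would rewrite $\pi(k) f = \int_G F(\xdot)\, \pi(\xdot) f \; d\mu_G(\xdot)$ as a projective integrated representation on $G = N/Z(N)$, where $F(\xdot) = \int_{Z(N)} k(s(\xdot) z)\, \chi(z)\; d\mu_{Z(N)}(z)$; as $k$ ranges over $C_c^{\infty}(N)$, the function $F$ ranges over all of $C_c^{\infty}(G)$. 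Next I would establish the covariance identity: from $\pi(\xdot)^{-1} \pi(\dot y) = c(\xdot, \dot y)\, \pi(\xdot^{-1} \dot y)$ with a unimodular cocycle factor $c$ built from $\sigma$, one gets $V_g(\pi(\xdot) f)(\dot y) = c(\xdot, \dot y)\, V_g f(\xdot^{-1} \dot y)$, a twisted left translation. Integrating against $F$ then shows that $V_g(\pi(F) f)$ is exactly the $\sigma$-twisted convolution of $F$ with $V_g f$.

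To conclude, I would choose $\{F_i\} \subseteq C_c^{\infty}(G)$ a nonnegative approximate identity of total mass $1$ concentrating at $e$. Because the cocycle is normalized so that $c(e, \cdot) = 1$ and $|\sigma| \equiv 1$, the twisted convolution by $F_i$ converges to the identity on $L^1(G)$ by the standard approximate-identity theorem together with continuity of translation in $L^1(G)$; hence $V_g(\pi(F_i) f) \to V_g f$ in $L^1(G)$. Since each $\pi(F_i) f$ is a G\aa rding vector (realize $F_i$ as some $\widetilde{k_i}$ via the surjectivity noted above), this gives $\|\pi(F_i) f - f\|_{\Hone} = \|V_g(\pi(F_i) f) - V_g f\|_{L^1} \to 0$, which is the claimed density. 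I expect the main obstacle to be the bookkeeping of the projective structure: verifying the covariance identity with the correct cocycle factor and confirming that the $\sigma$-twisted convolution by an approximate identity still converges to the identity in $L^1(G)$ despite the twist; the surjectivity of $k \mapsto F$ onto $C_c^{\infty}(G)$ is routine but should be recorded so that each approximant is genuinely a smooth vector.
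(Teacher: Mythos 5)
Your proposal is correct, but it takes a genuinely different route from the paper. The paper's own proof is a two-line application of the atomic decomposition theorem of Feichtinger--Gr\"ochenig: fixing a smooth atom $g \in \Hsmooth \setminus \{0\}$, every $f \in \CoLone$ admits an expansion $f = \sum_{i} c_i \pi(\lambda_i) g$ with $\{c_i\}_i \in \ell^1$ over a relatively separated set, and the partial sums --- being finite linear combinations of $\pi(\lambda_i)g$, hence smooth vectors --- converge to $f$ in $\CoLone$ precisely because the coefficient sequence is summable. You instead regularize $f$ itself by G\aa rding vectors $\pi(k_i)f$ and prove convergence on the voice-transform side via a $\sigma$-twisted approximate identity; the one step you flag as the main obstacle does go through: writing $c(\xdot,\dot y)H(\xdot^{-1}\dot y) - H(\dot y) = c(\xdot,\dot y)\bigl(H(\xdot^{-1}\dot y)-H(\dot y)\bigr) + \bigl(c(\xdot,\dot y)-1\bigr)H(\dot y)$, the first term tends to $0$ in $L^1(G)$ by continuity of translation (the twist is unimodular, so it is harmless there), and the second by dominated convergence, since $\sigma$ is jointly continuous and normalized, so $c(\xdot,\dot y) \to 1$ pointwise in $\dot y$ as $\xdot \to e$; since the $F_i$ have unit mass and shrinking supports, Minkowski's integral inequality then yields $\|V_g(\pi(F_i)f) - V_g f\|_{L^1} \to 0$. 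One point you should make explicit: the surjectivity of $k \mapsto F$ onto $C_c^{\infty}(G)$ requires a \emph{smooth} cross-section $s : N/Z(N) \to N$, which does exist for connected, simply connected nilpotent Lie groups via exponential coordinates. Comparing the two approaches: the paper's proof is essentially free because the atomic decomposition machinery of \cite{feichtinger1989banach1} is already in place and needed elsewhere; yours is more self-contained (no atomic decomposition theorem), works directly for any analyzing vector $g \in \Api$, and is slightly more constructive in that it exhibits the approximants as mollifications $\pi(k_i)f$ of $f$ itself rather than truncations of an expansion in a different system.
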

\begin{proof}
Let $f \in \Co(L^1 (G))$ be arbitrary. Fix a $g \in \Hsmooth \setminus \{0\}$. 
By the atomic decomposition result \cite[Theorem 6.1]{feichtinger1989banach1}, 
there exists a relatively separated set $\Lambda = \{ \lambda_i \}_{i \in \mathbb{N}} \subset G$ and a sequence
 $\{c_{i}\}_{i \in \mathbb{N}} \in \ell^1 (\Lambda)$ such that
$ f = \sum_{i \in \mathbb{N}} c_{i} \pi (\lambda_i) g$. 
For $n \in \mathbb{N}$, define $f_n = \sum_{i = 1}^n c_i \pi(\lambda_i) g$. 
Then $f_n \in \Hsmooth$, and $f_n \to f$ in $\Co (L^1 (G))$ as $n \to \infty$. 
\end{proof}

\begin{proposition} \label{prop:perturbation}
Let $(\pi, \Hpi)$ be a 
projective relative discrete series representation of a homogeneous group $G$. 
Let $g \in \Co (L^1 (G))$ and let $\Lambda \subset G$ be discrete. Then
\begin{enumerate}[(i)]
\item If $\pi (\Lambda) g$ forms a frame for $\Hpi$, then there exists 
$\widetilde{g} \in \Hsmooth$ such that $\pi (\Lambda) \widetilde{g}$ 
forms a frame for $\Hpi$.
\item If $\pi (\Lambda) g$ forms a Riesz sequence in $\Hpi$, 
then there exists $\widetilde{g} \in \Hsmooth$ such that $\pi (\Lambda) \widetilde{g}$ 
forms a Riesz sequence in $\Hpi$.  
\end{enumerate}
\end{proposition}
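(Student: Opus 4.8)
The plan is to exploit the openness of the frame and Riesz-sequence properties under small perturbations of the generator $g$, combined with the density of smooth vectors in $\Co(L^1(G))$ established in Lemma \ref{lem:smooth_dense}. The guiding principle is that both the frame property and the Riesz-sequence property are stable: if $\pi(\Lambda) g$ is a frame (resp.\ Riesz sequence) and $\widetilde{g}$ is sufficiently close to $g$ in a suitable norm, then $\pi(\Lambda)\widetilde{g}$ retains the same property. Since the analyzing and synthesizing estimates for coherent systems in Section \ref{sec:coherent} are governed by the $\WRC(G)$-norm of the cross-correlation (through the better-vector class $\Bpi$), the natural topology in which to perturb is that of $\Co(L^1(G)) = \Api$, where one controls $V_{\widetilde g}\,\widetilde g$ and the mixed coefficients $V_g \widetilde g$, $V_{\widetilde g} g$.

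First I would quantify the stability. For part (i), suppose $\pi(\Lambda) g$ is a frame, so $C_{g,\Lambda}\colon \Hpi \to \ell^2(\Lambda)$ is bounded below. Writing the difference operator $C_{g,\Lambda} - C_{\widetilde g,\Lambda}$ as $C_{g - \widetilde g,\Lambda}$, the operator-norm bound
\[
\| C_{g,\Lambda} - C_{\widetilde g,\Lambda} \|_{\Hpi \to \ell^2} \lesssim \rel(\Lambda)\,\| g - \widetilde g \|_{\Co(L^1(G))}
\]
follows from the boundedness estimates recorded in Section \ref{sec:coherent} applied to the generator $g - \widetilde g$ (one must first observe $g - \widetilde g \in \Api$, which holds since $\Api = \Co(L^1(G))$ is a vector space). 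Hence if $\| g - \widetilde g \|_{\Co(L^1(G))}$ is small enough relative to the lower frame bound of $\pi(\Lambda) g$, a standard Neumann-type perturbation argument shows $C_{\widetilde g,\Lambda}$ is still bounded below, i.e.\ $\pi(\Lambda)\widetilde g$ is a frame. By Lemma \ref{lem:smooth_dense}, one can choose such a $\widetilde g \in \Hsmooth$ within any prescribed $\Co(L^1(G))$-distance of $g$, which yields the claim.

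For part (ii) the argument is dual: if $\pi(\Lambda) g$ is a Riesz sequence then $D_{g,\Lambda}\colon \ell^2(\Lambda) \to \Hpi$ is bounded below, and the same estimate gives $\| D_{g,\Lambda} - D_{\widetilde g,\Lambda} \|_{\ell^2 \to \Hpi} \lesssim \rel(\Lambda)\,\| g - \widetilde g \|_{\Co(L^1(G))}$, so a small enough perturbation preserves the lower bound, and Lemma \ref{lem:smooth_dense} again supplies a smooth $\widetilde g$. The step I expect to be the main obstacle is establishing the perturbation estimate in a form that is uniform in the relevant constants and valid on the Hilbert-space level $p=2$: one must make sure the coefficient/reconstruction bounds of Section \ref{sec:coherent}, which are phrased for $g \in \Bpi$ (the $\WRC$-class), genuinely apply with only the weaker hypothesis $g \in \Co(L^1(G))$. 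If the bounds are only available for $\Bpi$, the cleaner route is to first approximate $g$ by a better vector $g_0 \in \Bpi$ in $\Co(L^1(G))$-norm — still possible by Lemma \ref{lem:smooth_dense} since $\Hsmooth \subseteq \Bpi$ — and then run the perturbation argument entirely within $\Bpi$, so that all operator-norm estimates are legitimate; this reduces the proposition to the perturbation stability of frames and Riesz sequences among better vectors, where the machinery of Section \ref{sec:coherent} applies directly.
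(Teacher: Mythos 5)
Your overall strategy---perturb the generator and invoke the density of $\Hsmooth$ in $\CoLone$ from Lemma \ref{lem:smooth_dense}---is exactly the paper's, but the one quantitative step that carries the whole argument is asserted rather than proved, and you yourself flag it as the obstacle without resolving it. The estimate you need,
\[
\big\| C_{g,\Lambda} - C_{\widetilde g,\Lambda} \big\|_{\Hpi \to \ell^2(\Lambda)} \lesssim \| g - \widetilde g \|_{\CoLone},
\]
does \emph{not} follow from the boundedness estimates of Section \ref{sec:coherent}: those bounds, and the sampling estimates behind them (e.g.\ \cite[Lemma 2]{romero2012characterization}), control the coefficient operator of a generator $u$ by the \emph{amalgam} norm $\| V_h u \|_{\WR}$, where $h$ is a reference vector. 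Since the amalgam norm dominates the $L^1$-norm and not conversely, smallness of $\| g - \widetilde g \|_{\CoLone} = \| V_h (g-\widetilde g)\|_{L^1}$ gives a priori no control on the operator norm of the difference. The missing idea is precisely the paper's inequality \eqref{eq:conv_repro}: choosing $h \in \Bpi$ normalized so that $V_h : \Hpi \to L^2(G)$ is an isometry, the reproducing formula yields the pointwise bound $|V_h u| \leq |V_h u| \ast |V_h h|$ for \emph{every} $u$ in $\CoLone$, and the convolution relation $L^1(G) \ast \WR \hookrightarrow \WR$ then gives
\[
\| V_h u \|_{\WR} \leq \| V_h u \|_{L^1} \, \| V_h h \|_{\WR} .
\]
This is what upgrades the amalgam-norm dependence to $\CoLone$-norm dependence; it is the heart of the proof, not a routine citation.

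Your fallback---first replace $g$ by some $g_0 \in \Bpi$ close in $\CoLone$-norm and then ``run the perturbation argument entirely within $\Bpi$''---does not repair the gap, because it is circular: to conclude that $\pi(\Lambda) g_0$ is still a frame you must already control $\| C_{g,\Lambda} - C_{g_0,\Lambda} \|$ by $\| g - g_0 \|_{\CoLone}$, and this single step unavoidably involves the original $g$, which need not lie in $\Bpi$; the difference $g - g_0$ is a priori not a better vector either. (Moreover, once you had a smooth $g_0$ generating a frame, the proposition would already be proved, so any subsequent perturbation ``within $\Bpi$'' accomplishes nothing.) With the displayed inequality in hand, by contrast, every $u \in \CoLone$ automatically satisfies $V_h u \in \WR$, so the perturbation estimate holds in $\CoLone$-norm for arbitrary generators in $\CoLone$, and your main line of argument then goes through exactly as in the paper; the Riesz-sequence case is handled identically on the synthesis side using \cite[Theorem 5.2]{feichtinger1989banach1}.
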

\begin{proof}
Let $\widetilde{g} \in \Hsmooth$ be arbitrary. 
Throughout the proof, fix a vector $h \in \Bpi \setminus \{0\}$ such that
$V_h : \Hpi \to L^2 (G)$ forms an isometry.  

(i) Let $f \in \Hpi$. By \cite[Lemma 2]{romero2012characterization}, 
for any $F \in L^2 (G)$ and $H \in W^R(L^{\infty}, L^1)$, 
it follows that $\| (\langle F, H(\lambda^{-1} \cdot) )_{\lambda} \|_{\ell^2 (\Lambda)} \lesssim \|F\|_{L^2} \| H \|_{W^R(L^{\infty}, L^1)}$.
Hence, there exists $C>0$ such that
\begin{align*}
\big\| (C_{g, \Lambda} - C_{\widetilde{g}, \Lambda} ) f \big\|_{\ell^2 (\Lambda)} 
&= \big\| \langle f, \pi(\lambda) [g- \widetilde{g}] \rangle_{\Hpi} \big\|_{\ell^2 (\Lambda)} 
= \big\| \langle V_h f ,V_h (\pi(\lambda) [ g- \widetilde{g}]) \rangle_{L^2 (G)} \big\|_{\ell^2 (\Lambda)} \\
&\leq C \| V_h f \|_{L^2 (G)} \big\| V_h (g - \widetilde{g}) \big\|_{W^R (L^{\infty}, L^1)} \\
&= C \big\| V_h (g-\widetilde{g}) \big\|_{W^R (L^{\infty}, L^1)} \| f \|_{\Hpi}. 
\end{align*}
The pointwise estimate $|V_h (g - \widetilde{g}) | \leq |V_h (g - \widetilde{g}) | \ast |V_h h|$ 
and the convolution relation $L^1 (G) \ast W^R(L^{\infty}, L^1) \hookrightarrow W^R(L^{\infty}, L^1)$
yield that
\begin{align} \label{eq:conv_repro}
\| V_h (g - \widetilde{g}) \big\|_{W^R (L^{\infty}, L^1)} \leq \| V_h (g - \widetilde{g}) \|_{L^1} \| V_h h \|_{W^R (L^{\infty}, L^1)}. 
\end{align}
Thus $\big\| (C_{g, \Lambda} - C_{\widetilde{g}, \Lambda} ) f \big\|_{\ell^2 (\Lambda)} \leq C \| V_h (g - \widetilde{g}) \|_{L^1 (G)} \| V_h h \|_{W^R (L^{\infty}, L^1)} \| f \|_{\Hpi}$. 

Suppose  $\CS$ forms a frame for $\Hpi$ satisfying $\|C_{g, \Lambda} f \|_{\ell^2 (\Lambda)} \geq A\|f\|_{\Hpi}$ for all $f \in \Hpi$. 
Lemma \ref{lem:smooth_dense} yields a $\widetilde{g} \in \Hsmooth$ such that 
$K := C \| V_h h \|_{W^R (L^{\infty}, L^1)} \| V_h (g - \widetilde{g}) \|_{L^1} < A$. 
Hence
\begin{align*}
\| C_{\widetilde{g}, \Lambda} f \|_{\ell^2 (\Lambda)} 
\geq \| C_{g, \Lambda} f \|_{\ell^2 (\Lambda)} - \| (C_{g, \Lambda} - C_{\widetilde{g}, \Lambda}) f \|_{\ell^2 (\Lambda)} 
\geq (A - K ) \| f \|_{\Hpi}
\end{align*}
for all $f \in \Hpi$, which shows (i). 

(ii) 
Let $c = \{c_{\lambda}\}_{\lambda \in \Lambda} \in \ell^2 (\Lambda)$. 
By \cite[Theorem 5.2]{feichtinger1989banach1}, 
for any $H \in W^R (L^{\infty}, L^1)$, it holds that
$\| \sum_{\lambda \in \Lambda} c_{\lambda} H(\lambda^{-1} \cdot) \|_{L^2} \lesssim \| c \|_{\ell^2 (\Lambda)} \| H \|_{W^R (L^{\infty}, L^1)}$. 
Hence, there exists $C > 0$ such that
\begin{align*}
\bigg\| \big(D_{g, \Lambda} - D_{\widetilde{g}, \Lambda} \big) c \bigg\|_{\Hpi} 
&= \bigg\| \sum_{\lambda \in \Lambda} c_{\lambda} \pi(\lambda) [g - \widetilde{g}] \bigg\|_{\Hpi} 
=   \bigg\| \sum_{\lambda \in \Lambda} c_{\lambda} V_h (\pi (\lambda) [g - \widetilde{g}]) \bigg\|_{L^2} \\
&\leq C  \big\| V_h (g - \widetilde{g}) \big\|_{W^R (L^{\infty}, L^1)}  \| c \|_{\ell^2 (\lambda)} \\
&\leq C \|V_h h \|_{W^R (L^{\infty}, L^1)} \big\| V_h (g - \widetilde{g}) \big\|_{L^1}  \| c \|_{\ell^2 (\lambda)}
\end{align*}

Suppose $\CS$ forms a Riesz sequence with $\|D_{g, \Lambda} c \|_{\Hpi} \geq A \| c \|_{\ell^2}$
for all $c \in \ell^2 (\Lambda)$.  
By Lemma \ref{lem:smooth_dense}, there exists $\widetilde{g} \in \Hsmooth$ such that 
$K := C \|V_h h \|_{W^R (L^{\infty}, L^1)} \| V_h (g - \widetilde{g}) \|_{L^1} < A$. 
Hence
\begin{align*}
\big\| D_{\widetilde{g}, \Lambda} c \big\|_{\Hpi} \geq \big\| D_{g, \Lambda} c \big\|_{\Hpi} - \big\| (D_{g, \Lambda} - D_{\widetilde{g}, \Lambda}) c \big\|_{\Hpi} \geq (A - K) \| c \|_{\ell^2 (\Lambda)} 
\end{align*}
for all $c \in \ell^2 (\Lambda)$. This completes the proof. 
\end{proof}

We now prove Theorem \ref{thm:balianlow_homogeneous}, 
which asserts that the density inequalities in Theorem \ref{thm:nonstrict_density} are strict. 

\begin{proof}[Proof of Theorem \ref{thm:balianlow_homogeneous}]
(i) We argue indirectly and  assume  that $\CS$ is a 
frame with $D^- (\Lambda) = d_{\pi}$. Then, by Proposition \ref{prop:perturbation}, 
there exists a $\widetilde{g} \in \Hsmooth$ such that $\pi (\Lambda) \widetilde{g}$ 
forms a frame for $\Hpi$. 
 Let $\{\Lambda_n\}_{n \in \mathbb{N}} = \{D_{r_n} (\Lambda)\}_{n \in \mathbb{N}}$
with $r_n > 1$ for all $n \in \mathbb{N}$ and $r_n \to 1$ as $n \to \infty$. 
Then, by Theorem \ref{thm:frame_stability}, there exists an $n_0 \in \mathbb{N}$ 
such that for all $n \geq n_0$, the system $\pi (\Lambda_n) \widetilde{g}$ is a frame for $\Hpi$. But 
$D^- (\Lambda_n) = r_n^{-Q} D^- (\Lambda) < d_{\pi}$, 
which contradicts Theorem \ref{thm:nonstrict_density}(i). 

The proof of (ii) is similar.  
Assume that $\CS$ forms a Riesz sequence in $\Hpi$ with $D^+ (\Lambda) = d_{\pi}$. 
By Proposition \ref{prop:perturbation}, there exists $\widetilde{g} \in \Hsmooth$ such that $\pi (\Lambda) \widetilde{g}$ 
forms a Riesz sequence in $\Hpi$. 
Let $\{r_n\}_{n \in \mathbb{N}} \subset (0,1)$ be such that $r_n \to 1$ as $n \to \infty$, 
and set $\Lambda_n = D_{r_n} (\Lambda)$. By Theorem \ref{thm:riesz_stability}, the system $\pi(\Lambda_n) \widetilde{g}$ 
forms a Riesz sequence in $\Hpi$ for all sufficiently large $n \in \mathbb{N}$, 
and $D^+ (\Lambda_n) > d_{\pi}$. This contradicts Theorem \ref{thm:nonstrict_density}(ii)
and completes the proof. 
\end{proof}

\begin{corollary}
Let $(\pi, \Hpi)$ be a 
projective relative discrete series representation of a homogeneous group $G$. 
Suppose that $g \in \Hsmooth$. 
\begin{enumerate} 
\item Let $\Lambda \subset G$ be relatively separated and relatively dense.
If $\CS$ is a $p$-frame for $\CoLp$ for some $p \in [1,\infty]$, then $D^- (\Lambda) > d_{\pi}$. 
\item Let $\Lambda \subset G$ be separated. If $\CS$ is a $p$-Riesz sequence 
for $\CoLp$ for some $p \in [1,\infty]$, then $D^+ (\Lambda) < d_{\pi}$. 
\end{enumerate}
\end{corollary}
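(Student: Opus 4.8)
The plan is to deduce this corollary from Theorem~\ref{thm:balianlow_homogeneous} by using the universality results already established. The key observation is that the $p$-frame property (respectively $p$-Riesz sequence property) is independent of $p \in [1,\infty]$ whenever $g \in \Hsmooth$. Indeed, Theorem~\ref{thm:frame_Riesz_universal} asserts precisely this: if $\CS$ forms a $p$-frame for $\CoLp$ for \emph{some} $p$, then it forms a $p$-frame for \emph{all} $p \in [1,\infty]$. In particular, taking $p = 2$ and recalling that $\Hpi = \Co(L^2(G))$ with the $2$-frame property coinciding with the ordinary Hilbert-space frame property, we obtain that $\CS$ is a frame for $\Hpi$ in the classical sense.

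For part~(1), the plan is as follows. First I would invoke Theorem~\ref{thm:frame_Riesz_universal}(i) to pass from the hypothesis that $\CS$ is a $p$-frame for some $p \in [1,\infty]$ to the conclusion that $\CS$ is a $2$-frame for $\Co(L^2(G)) = \Hpi$, i.e., an ordinary frame for $\Hpi$. Since $g \in \Hsmooth \subseteq \Bpi \subseteq \Api$, the vector $g$ is in particular an integrable vector, so the integrability hypothesis of Theorem~\ref{thm:balianlow_homogeneous} is met. Applying Theorem~\ref{thm:balianlow_homogeneous}(i) then yields $D^-(\Lambda) > d_{\pi}$, which is exactly the desired strict inequality. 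The geometric assumptions that $\Lambda$ be relatively separated and relatively dense are consistent with Lemma~\ref{lem:neccond_geometry}(i) and cause no difficulty.

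For part~(2), the argument is entirely parallel. I would use Theorem~\ref{thm:frame_Riesz_universal}(ii) to convert the hypothesis that $\CS$ is a $p$-Riesz sequence for some $p$ into the statement that it is a $2$-Riesz sequence in $\Hpi$, i.e., a Riesz sequence in the ordinary sense. Again $g \in \Hsmooth$ supplies the required integrability, and Theorem~\ref{thm:balianlow_homogeneous}(ii) gives $D^+(\Lambda) < d_{\pi}$. The separation assumption on $\Lambda$ matches Lemma~\ref{lem:neccond_geometry}(ii).

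Since the entire content of the corollary is contained in the two quoted theorems, no genuine obstacle arises; the only point requiring care is the bookkeeping that the smoothness hypothesis $g \in \Hsmooth$ simultaneously guarantees \emph{both} the applicability of the universality theorem (which requires $g \in \Hsmooth$) \emph{and} the integrability hypothesis of Theorem~\ref{thm:balianlow_homogeneous} (which requires $\int_{G} |\langle g, \pi(x) g\rangle|\, d\mu_G(\xdot) < \infty$, automatic since $g \in \Hsmooth \subseteq \Api$). I expect this verification to be the only nontrivial step, and it is routine given the inclusions $\Hsmooth \subseteq \Bpi \subseteq \Api = \Co(L^1(G))$ recorded in Section~\ref{sec:coorbit}.
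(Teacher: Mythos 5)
Your proposal is correct and is essentially the paper's own (implicit) argument: the corollary is stated without proof precisely because it follows by feeding the $p$-frame/$p$-Riesz hypothesis through Theorem~\ref{thm:frame_Riesz_universal} (applicable in part~(2) as well, since a separated set is in particular relatively separated) to obtain an ordinary frame or Riesz sequence in $\Hpi = \Co(L^2(G))$, and then invoking Theorem~\ref{thm:balianlow_homogeneous}, whose integrability hypothesis is automatic because $V_g g \in \mathcal{S}(G) \subset L^1(G)$ for $g \in \Hsmooth$. Your bookkeeping of the inclusions $\Hsmooth \subseteq \Bpi \subseteq \Api$ is exactly the verification required, so nothing further is needed.
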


\appendix

\section{Homogeneous quotient groups} \label{sec:homogeneous_quotient}
The purpose of this appendix is to show that the quotient $G/Z(G)$ of a homogeneous group $G$ 
and its center $Z(G)$ can be made 
into a homogeneous group in a canonical fashion. 

\begin{lemma}
\label{lemma_hom_q}
Let $G$ be a homogeneous group. Then the quotient $G/Z(G)$ is also homogeneous.
\end{lemma}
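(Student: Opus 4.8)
The plan is to construct an explicit dilation structure on the quotient $G/Z(G)$ by descending the dilations of $G$ through the quotient map, and then verify that this makes $G/Z(G)$ a homogeneous group in the sense defined in Section~2. The key observation is that the center $Z(G)$ is invariant under every automorphism of $G$, and in particular under each dilation $D_r$. Consequently the dilations should pass to well-defined maps on the quotient.

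First I would work at the level of Lie algebras. Writing $\mathfrak{g}$ for the Lie algebra of $G$ and $\mathfrak{z}$ for its center, I would show that $\mathfrak{z}$ is invariant under the infinitesimal dilation generator $A$, i.e.\ $A(\mathfrak{z}) \subseteq \mathfrak{z}$. This follows because each $D_r^{\mathfrak{g}} = \exp_{\mathrm{GL}(\mathfrak{g})}(A \ln r)$ is a Lie algebra automorphism, so it preserves the center $\mathfrak{z}$ (the center is characterized purely by the bracket, hence is preserved by any automorphism); differentiating the inclusion $D_r^{\mathfrak{g}}(\mathfrak{z}) \subseteq \mathfrak{z}$ at $r=1$ gives $A(\mathfrak{z}) \subseteq \mathfrak{z}$. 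Since $A$ is diagonalizable with positive eigenvalues, its restriction to the invariant subspace $\mathfrak{z}$ and the induced operator $\overline{A}$ on the quotient $\mathfrak{g}/\mathfrak{z}$ are again diagonalizable with positive eigenvalues. Thus $\overline{A}$ furnishes a family of dilations $\overline{D_r^{\mathfrak{g}}} = \exp_{\mathrm{GL}(\mathfrak{g}/\mathfrak{z})}(\overline{A} \ln r)$ on the quotient Lie algebra $\mathfrak{g}/\mathfrak{z}$.

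Next I would transfer this to the group level. Since $G$ is connected, simply connected, and nilpotent, the exponential map $\exp:\mathfrak{g}\to G$ is a diffeomorphism, and the same holds for $G/Z(G)$, which is again connected, simply connected, and nilpotent. I would check that the Lie algebra of $G/Z(G)$ is canonically isomorphic to $\mathfrak{g}/\mathfrak{z}$, so that the dilation family $\overline{D_r^{\mathfrak{g}}}$ just constructed is precisely a dilation family on the Lie algebra of $G/Z(G)$. By the definition in Section~2, this makes $G/Z(G)$ a homogeneous group. One should also confirm compatibility at the group level, namely that the induced group dilations $\overline{D_r}$ on $G/Z(G)$ satisfy $\overline{D_r} \circ q = q \circ D_r$ where $q:G\to G/Z(G)$ is the quotient map; this is immediate from the naturality of $\exp$ with respect to quotients.

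The main obstacle, and the only genuinely delicate point, is verifying that the center of $\mathfrak{g}$ (equivalently, the Lie algebra of $Z(G)$) is exactly the full invariant subspace one quotients by, and that the induced operator $\overline{A}$ genuinely has positive eigenvalues rather than merely nonnegative ones. Positivity is automatic here because $\overline{A}$ is similar to the restriction of $A$ to an $A$-invariant complement of $\mathfrak{z}$, whose eigenvalues are a subset of the positive eigenvalues of $A$; diagonalizability of $A$ guarantees such a complement exists. The remaining verifications --- that $q$ is a dilation-equivariant homomorphism and that the quotient inherits simple connectivity and nilpotency --- are routine consequences of the structure theory of nilpotent Lie groups and require no serious computation.
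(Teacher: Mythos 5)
Your proposal is correct and follows essentially the same route as the paper: both arguments show that $A$ preserves $\mathfrak{z}(\mathfrak{g})$ by differentiating the invariance of the center under the automorphisms $D_r^{\mathfrak{g}}$, then descend $A$ to a well-defined operator $\overline{A}$ on $\mathfrak{g}/\mathfrak{z}(\mathfrak{g})$ and check that it is diagonalizable with positive eigenvalues (the paper does this by pushing the eigenbasis of $A$ down to the quotient, you by invoking an $A$-invariant complement of $\mathfrak{z}(\mathfrak{g})$ --- the same linear algebra in slightly different packaging).
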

\begin{proof}
Write the family of dilations $\{ D_r^{\mathfrak{g}} \}_{r > 0}$ on the Lie algebra $\mathfrak{g}$ 
as a one-parameter subgroup
\begin{align*}
D_r^{\mathfrak{g}} = \exp_{\mathrm{GL}(\mathfrak{g})}(A \ln(r)) 
= \exp_{\mathrm{GL}(\mathfrak{g})}(t A) =: V(t)
\end{align*}
of $\mathrm{GL}(\mathfrak{g})$ in the parameter $t \in \mathbb{R}$. 
Since each $D_r^{\mathfrak{g}}$ is an automorphism of $\mathfrak{g}$, 
it leaves the center $\mathfrak{z} (\mathfrak{g})$ invariant, that is, $D_r^{\mathfrak{g}} \bigl( \mathfrak{z}(\mathfrak{g}) \bigr) = \mathfrak{z}(\mathfrak{g})$ for all $r > 0$. 
Thus, for fixed $Z \in \mathfrak{z}(\mathfrak{g})$, the map
$t \mapsto V(t)Z$ from $\mathbb{R}^+ $ into $ \mathfrak{z}(\mathfrak{g})$ is a $C^1$-curve.
 Since $\mathfrak{z}(\mathfrak{g})$ is an ideal in  $\mathfrak{g}$, 
 it follows that $\lim_{h \to 0} \frac{1}{h} \bigl( V(h)Z - Z \bigr) = AZ \in \mathfrak{z}(\mathfrak{g})$.
 Thus we have that $A (\mathfrak{z}(\mathfrak{g})) \subseteq \mathfrak{z}(\mathfrak{g})$. 

 We next show that
$\overline{\mathfrak{g}} := \mathfrak{g}/\mathfrak{z}(\mathfrak{g})$ 
admits a family of dilations
\begin{align*}
D_r^{\overline{\mathfrak{g}}} = \exp_{\mathrm{GL}(\overline{\mathfrak{g})}}(\overline{A} \log(r)), \quad r > 0,
\end{align*}
for a diagonalizable matrix $\overline{A}$ with eigenvalues greater $0$. 

Let $\{X_1, ..., X_{\dim (\mathfrak{g})} \}$ be the eigenvectors of $A$. 
Define a linear map $\overline{A} : \overline{\mathfrak{g}} \to \overline{\mathfrak{g}}$ 
by 
\[ \overline{A} (\overline{X}) = A (X + \mathfrak{z}(\mathfrak{g})),\] 
where $X \in \mathfrak{g}$ is such that $\overline{X} = X + \mathfrak{z}(\mathfrak{g})$. 
Note that $\overline{A} : \overline{\mathfrak{g}} \to \overline{\mathfrak{g}}$ is well-defined 
since $A \mathfrak{z}(\mathfrak{g}) \subseteq \mathfrak{z}(\mathfrak{g})$. 
Let $v_j$ be an eigenvalue of $A$ with corresponding eigenvector $X_j$.  
Then $\overline{A} (\overline{X}_j) = v_j \overline{X}_j$.
Thus, if $X_j \notin \mathfrak{z}(\mathfrak{g})$, 
then $\overline{X}_j$ is a non-zero eigenvector of $\overline{A}$ with $v_j > 0$. 
On the other hand, if $X_j \in \mathfrak{z}(\mathfrak{g})$, then $\overline{X}_j = 0$. 
Since $\overline{\mathfrak{g}} = \mathbb{R}$-$\spann \{ \overline{X}_1, ..., \overline{X}_{\dim(\mathfrak{g})} \}$, 
it follows that, by removing finitely many vectors if necessary, 
we obtain an eigenbasis of $\overline{\mathfrak{g}}$. 
\end{proof}

\section{Universality of frames and Riesz sequences}

\subsection{Weighted Schur algebra} \label{sec:schur_algebra}
The following is a Wiener-type lemma for the Schur class on a homogeneous group. 
The result is a special case of an analogous results for matrices 
over discrete metric spaces possessing the polynomial growth property
\cite{sun2007wiener}. 

\begin{theorem} \label{thm:schur_algebra}
Let $G$ be a homogeneous group and let $\Gamma \subseteq G$ 
be a relatively separated set. 
For $\alpha > 0$, let $v_{\alpha} : \Gamma \times \Gamma \to \mathbb{R}^+, \; (\gamma, \gamma') \mapsto (1+|\gamma^{-1} \gamma'|_G )^{\alpha}$. 
Then the Schur class
\[
\mathcal{A}^1_{v_{\alpha}} (\Gamma) 
:= \bigg\{ A \in \mathbb{C}^{\Gamma \times \Gamma} \; : \;  \sup_{\gamma \in \Gamma} \sum_{\gamma' \in \Gamma} v_{\alpha} (\gamma, \gamma') |A_{\gamma, \gamma'}| + \sup_{\gamma' \in \Gamma} \sum_{\gamma \in \Gamma} 
v_{\alpha} (\gamma, \gamma') |A_{\gamma, \gamma'}| < \infty \bigg\}
\]
forms a Banach $*$-algebra. Moreover, it is inverse-closed  and pseudo-inverse
closed in $\mathcal{B}(\ell ^2(\Gamma ))$. 
\end{theorem}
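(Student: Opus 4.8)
The plan is to treat the three assertions in turn: to establish the algebra structure directly by hand, and then to reduce the two spectral statements to the Wiener-type lemma of Sun~\cite{sun2007wiener}.

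First I would verify that $\mathcal{A}^1_{v_{\alpha}}(\Gamma)$ is a Banach $*$-algebra. Completeness with respect to the given norm is routine, since the norm dominates the entrywise supremum and dominated convergence handles the row and column sums. The involution is the conjugate transpose $(A^*)_{\gamma,\gamma'} = \overline{A_{\gamma',\gamma}}$; because the homogeneous norm satisfies $|x^{-1}|_G = |x|_G$, the weight is symmetric, $v_{\alpha}(\gamma,\gamma') = v_{\alpha}(\gamma',\gamma)$, so the two supremum terms in the norm merely swap under $A \mapsto A^*$ and $\|A^*\| = \|A\|$. For submultiplicativity the key observation is that $v_{\alpha}$ is submultiplicative across a middle index: the triangle inequality $|\gamma^{-1}\gamma''|_G \le |\gamma^{-1}\gamma'|_G + |\gamma'^{-1}\gamma''|_G$ together with $1 + s + t \le (1+s)(1+t)$ for $s,t \ge 0$ gives $v_{\alpha}(\gamma,\gamma'') \le v_{\alpha}(\gamma,\gamma')\, v_{\alpha}(\gamma',\gamma'')$. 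Inserting this into the entries of $AB$ and summing in the standard Schur fashion separates the double sum into a product of a row sum of $A$ and a row sum of $B$, so that the row part of the norm is submultiplicative, and symmetrically for the column part; adding the two parts yields $\|AB\| \le \|A\|\,\|B\|$.

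The heart of the matter is inverse-closedness, which I would obtain by exhibiting $\mathcal{A}^1_{v_{\alpha}}(\Gamma)$ as an instance of the matrix algebras treated in~\cite{sun2007wiener}. For this I equip $\Gamma$ with the restriction of the homogeneous metric $d_G$ and check the polynomial growth hypothesis: since $\Gamma$ is relatively separated and $\mu_G(B_R(x)) = R^Q \mu_G(B_1(e))$ by homogeneity of the Haar measure, covering $B_R(\gamma)$ by unit balls shows $\#\big(\Gamma \cap B_R(\gamma)\big) \lesssim (1+R)^Q$ uniformly in $\gamma$. Thus $(\Gamma, d_G)$ is a discrete metric space of polynomial growth carrying a weight $v_{\alpha}$ of polynomial type, and Sun's Wiener lemma applies to show that $A \in \mathcal{A}^1_{v_{\alpha}}(\Gamma)$ invertible in $\mathcal{B}(\ell^2(\Gamma))$ forces $A^{-1} \in \mathcal{A}^1_{v_{\alpha}}(\Gamma)$. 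In particular $\mathcal{A}^1_{v_{\alpha}}(\Gamma)$ is a spectral subalgebra of $\mathcal{B}(\ell^2(\Gamma))$ and is therefore closed under the holomorphic functional calculus.

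Finally, pseudo-inverse closedness follows from inverse-closedness by the standard spectral-algebra argument. If $A \in \mathcal{A}^1_{v_{\alpha}}(\Gamma)$ has closed range, then the self-adjoint element $A^*A$ is bounded below on $(\ker A)^\perp$, so $0$ is isolated in its spectrum; the Riesz projection $Q = \frac{1}{2\pi i}\oint (zI - A^*A)^{-1}\,dz$ onto $\ker A$ therefore lies in $\mathcal{A}^1_{v_{\alpha}}(\Gamma)$, since inverse-closedness keeps each resolvent in the algebra and the contour integral remains in the closed subalgebra. Then $A^*A + Q$ is invertible in $\mathcal{B}(\ell^2(\Gamma))$ and belongs to the algebra, whence $(A^*A+Q)^{-1} \in \mathcal{A}^1_{v_{\alpha}}(\Gamma)$, and the Moore--Penrose pseudo-inverse $A^\dagger = (A^*A + Q)^{-1} A^*$ is a product of elements of the algebra. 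I expect the main obstacle to be inverse-closedness itself, which is genuinely deep; once one invokes~\cite{sun2007wiener}, the only remaining work is the geometric verification that a relatively separated subset of a homogeneous group, endowed with the homogeneous metric, satisfies the polynomial growth condition required there.
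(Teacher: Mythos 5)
Your proposal is correct, and its core coincides with the paper's proof: both reduce the genuinely deep assertion, inverse-closedness, to Sun's Wiener lemma for infinite matrices \cite{sun2007wiener}, after verifying that a relatively separated set in a homogeneous group, with the restricted homogeneous metric and counting measure, has polynomial growth of order $Q$ (your covering argument and the paper's packing/doubling argument are interchangeable). Where you genuinely diverge is in the treatment of the other two assertions. The paper obtains all three claims at once by citing \cite{sun2007wiener} (its Theorems 4.1 and 5.1) after checking that paper's standing hypotheses, which include, besides polynomial growth, the requirement that the weight be \emph{admissible} in Sun's sense; the paper disposes of this by pointing to Sun's Example A.2. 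You instead prove the Banach $*$-algebra structure by hand (your submultiplicativity argument via $v_{\alpha}(\gamma,\gamma'') \le v_{\alpha}(\gamma,\gamma')\,v_{\alpha}(\gamma',\gamma'')$ is fine), and you \emph{derive} pseudo-inverse closedness from inverse-closedness through the Riesz projection $Q$ onto $\ker A$ and the Moore--Penrose identity $A^{\dagger} = (A^*A+Q)^{-1}A^*$. That argument is correct and standard, and it buys something: it exhibits pseudo-inverse closedness as a formal consequence of inverse-closedness in any unital, $*$-closed, inverse-closed Banach subalgebra of $\mathcal{B}(\ell^2(\Gamma))$, making this part independent of the specifics of Sun's paper, whereas the paper's route is shorter but rests entirely on citation. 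Two points to tighten: first, when invoking Sun you must verify the weight hypothesis as a separate standing assumption, not only the growth of the metric space --- ``polynomial type'' is exactly what Sun's Example A.2 certifies, but it should be said; second, $\mathcal{A}^1_{v_{\alpha}}(\Gamma)$ is \emph{not} a closed subalgebra of $\mathcal{B}(\ell^2(\Gamma))$ in the operator norm, so the contour integral defining $Q$ must be interpreted in the $\mathcal{A}^1_{v_{\alpha}}$-norm; this is legitimate because inverse-closedness makes the resolvent $z \mapsto (zI - A^*A)^{-1}$ analytic as an $\mathcal{A}^1_{v_{\alpha}}$-valued map (Neumann series around any point of the common resolvent set), so the Riemann sums converge in the algebra norm. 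For completeness one should also record that the Schur test, together with $v_{\alpha} \ge 1$, embeds $\mathcal{A}^1_{v_{\alpha}}(\Gamma)$ continuously into $\mathcal{B}(\ell^2(\Gamma))$, so that inverse- and pseudo-inverse-closedness are meaningful statements in the first place.
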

\begin{proof}
The result follows by combining \cite[Theorem 4.1]{sun2007wiener} and \cite[Theorem 5.1]{sun2007wiener} once we verified the standing hypotheses \cite[Section 2]{sun2007wiener} on the index set $\Gamma$ and weight $w_{\alpha}$. 

Consider the metric space $(\Gamma, d_{\Gamma} )$, with $d_{\Gamma} := d_G |_{\Gamma}$ 
being the restriction of the homogeneous metric $d_G$. 
The triple $(G, d_G, \mu_G)$ is a space of homogeneous type 
and the Haar measure $\mu_G$ satisfies the doubling property
\cite[Lemma 3.2.12]{fischer2016quantization}.
Using this, together with the relative separatedness of $\Gamma \subseteq G$, 
it follows by a packing argument that
\[
\mu_c (\Gamma \cap B_r (\gamma) ) \lesssim \rel(\Gamma) \mu_G (B_{r+1} (\gamma) ) 
\leq \rel (\Gamma) 2^Q r^Q \mu_G (B_1 (e)) \lesssim r^Q
\]
for all $\gamma \in \Gamma$ and $r \geq 1$, where $\mu_c$ is the counting measure on $\Gamma$. 
This shows that the triple $(\Gamma, d_{\Gamma}, \mu_c)$ satisfies the so-called polynomial growth property \cite[Section 2.1]{sun2007wiener}. 

The weight $v_{\alpha}$ is admissible \cite[Section 2.2]{sun2007wiener} for all $\alpha > 0$ by \cite[Example A.2]{sun2007wiener}. 
\end{proof}

\subsection{Envelopes in the strong amalgam space}

The following result is essentially \cite[Lemma 6]{romero2012characterization}. 
For completeness, we include the proof. 

\begin{lemma} \label{lem:aux_lemma}
Let $ \Gamma \subset G$ be a  relatively separated set in  a homogeneous group $G$.
 If $F_1, F_2 \in \WRstw(G)$ for $\alpha \geq 0$, then also the function $H : G \to \mathbb{C}$,
\[
H(x)= \sup _{y\in G} \sum_{\gamma \in \Gamma} F_1 (\gamma^{-1} y) F_2 (\gamma^{-1} y x) 
\]
belongs to $\WRstw(G)$.
\end{lemma}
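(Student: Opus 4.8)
The plan is to show directly that the two-sided control function $H^{\sharp}_{\sharp}$ lies in $L^1_{w_{\alpha}}(G)$, by dominating it pointwise by $\rel(\Gamma)$ times a weighted convolution of the control functions of $F_1$ and $F_2$, and then invoking that $L^1_{w_{\alpha}}(G)$ is a convolution algebra. Since we only need to control the size of $H$, we may replace $F_1, F_2$ by $|F_1|, |F_2|$ and hence assume $F_1, F_2 \ge 0$; this only enlarges $H$ and leaves the control functions unchanged. Throughout I will use that the weight $w_{\alpha}(x) = (1+|x|_G)^{\alpha}$ is symmetric, $w_{\alpha}(x^{-1}) = w_{\alpha}(x)$, and submultiplicative, $w_{\alpha}(xy) \le w_{\alpha}(x) w_{\alpha}(y)$ for $\alpha \ge 0$, both of which follow from the properties of $|\cdot|_G$; consequently $L^1_{w_{\alpha}}(G)$ is a Banach $\ast$-algebra under convolution that is stable under $F \mapsto F^{\vee}$.

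The first and most delicate step is to handle the left perturbation hidden in $H^{\sharp}_{\sharp}(x) = \esssup_{u,v \in B_1(e)} H(uxv)$. A naive estimate would force the argument of $F_2$ through a conjugation $u \mapsto aua^{-1}$ with $a = \gamma^{-1}y$, which need not remain in a bounded set on a non-abelian group. The device that resolves this is to absorb the left shift into the free supremum variable: substituting $y \mapsto yu^{-1}$ in the definition of $H(uxv)$ turns the arguments into $\gamma^{-1}yu^{-1}$ and $\gamma^{-1}yxv$, so that both perturbations $u^{-1}$ and $v$ now sit on the right. Dominating $|F_1(\gamma^{-1}yu^{-1})| \le (F_1)^{\sharp}(\gamma^{-1}y)$ and $|F_2(\gamma^{-1}yxv)| \le (F_2)^{\sharp}(\gamma^{-1}yx)$ and taking the supremum over $u,v$ yields the $u,v$-free bound
\[ H^{\sharp}_{\sharp}(x) \le \sup_{y \in G} \sum_{\gamma \in \Gamma} (F_1)^{\sharp}(\gamma^{-1}y)\,(F_2)^{\sharp}(\gamma^{-1}yx). \]

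Next I discretize the sum using the relative separation of $\Gamma$. The standard sampling inequality, a consequence of the bounded overlap of the balls $\{B_1(\gamma)\}_{\gamma \in \Gamma}$, gives $\sum_{\gamma \in \Gamma} \Psi(\gamma) \lesssim \rel(\Gamma) \int_G \Psi^{\sharp}\, d\mu_G$ for nonnegative $\Psi$. Applying this with $\Psi(w) = \Phi_x(w^{-1}y)$, where $\Phi_x(z) = (F_1)^{\sharp}(z)(F_2)^{\sharp}(zx)$, and computing $\Psi^{\sharp}(w) = (\Phi_x)_{\sharp}(w^{-1}y)$, the substitution $w \mapsto w^{-1}y$ together with the unimodularity of $G$ renders the resulting integral independent of $y$; hence the supremum over $y$ is harmless and
\[ H^{\sharp}_{\sharp}(x) \lesssim \rel(\Gamma) \int_G (\Phi_x)_{\sharp}(w)\, d\mu_G(w). \]
Bounding $(\Phi_x)_{\sharp}(w) \le (F_1)^{\sharp}_{\sharp}(w)\,(F_2)^{\sharp}_{\sharp}(wx)$ and recognizing $\int_G (F_1)^{\sharp}_{\sharp}(w)(F_2)^{\sharp}_{\sharp}(wx)\, d\mu_G(w)$ as $\big( ((F_1)^{\sharp}_{\sharp})^{\vee} \ast (F_2)^{\sharp}_{\sharp} \big)(x)$ completes the pointwise domination.

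The conclusion is then immediate: since $(F_i)^{\sharp}_{\sharp} \in L^1_{w_{\alpha}}(G)$ by hypothesis and this space is a convolution algebra stable under $(\cdot)^{\vee}$, the convolution lies in $L^1_{w_{\alpha}}(G)$ with norm at most $\|F_1\|_{\WRstw}\|F_2\|_{\WRstw}$; multiplying through by $\rel(\Gamma)$ shows $H^{\sharp}_{\sharp} \in L^1_{w_{\alpha}}(G)$, i.e. $H \in \WRstw(G)$. I expect the substitution in the second step to be the crux, as it is precisely where the non-commutativity of $G$ must be tamed, while the remaining steps are routine amalgam-space bookkeeping, essentially as in \cite[Lemma 6]{romero2012characterization}.
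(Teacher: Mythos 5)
Your proof is correct and rests on essentially the same mechanism as the paper's: dominate the two-sided control function of $H$ by $\rel(\Gamma)$ times a convolution of the two-sided control functions of $F_1$ and $F_2$, then conclude with the weighted convolution inequality in $L^1_{w_{\alpha}}(G)$ (which the paper carries out as an explicit Fubini estimate rather than by quoting the algebra property). The only structural difference is the order of operations: the paper first bounds $H(x)$ itself by a convolution of one-sided control functions (averaging over $B_1(e)$ and using the bounded overlap of the balls $B_1(\gamma)$), and then absorbs the $u,v$-perturbations through substitutions inside the convolution integral, whereas you absorb the perturbations first, via the substitution $y \mapsto yu^{-1}$ in the free supremum variable, and discretize afterwards --- both substitutions serve the identical purpose of keeping all perturbations on the right, where control functions can swallow them.
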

\begin{proof}
Let $x, y \in G$ be fixed.
Then
\begin{align*}
\sum_{\gamma \in \Gamma} F_1 (\gamma^{-1} y) F_2 (\gamma^{-1} yx) 
&\lesssim \sum_{\gamma \in \Gamma} 
\int_G (F_1)_{\sharp} (z^{-1} \gamma^{-1} y) (F_2)_{\sharp} (z^{-1} \gamma^{-1} y x) \mathds{1}_{B_1 (e)} (z) \; d\mu_G (z) \\
&= \int_G (F_1)_{\sharp} (z^{-1}) (F_2)_{\sharp} (z^{-1} x) \sum_{\gamma \in \Gamma} \mathds{1}_{B_1 (e)} (\gamma^{-1} y z) \; d\mu_G (z). 
\end{align*}
Since $\Gamma \subset G$ is relatively separated, 
it follows 
$
\sum_{\gamma \in \Gamma} \mathds{1}_{B_1 (e)} (\gamma^{-1} y z) 
= \sum_{\gamma \in \Gamma} \mathds{1}_{B_1 (\gamma )} ( y z) \lesssim 1,
$
and hence 
\[
H(x) \lesssim \int_G (F_1)_{\sharp} (z^{-1}) (F_2)_{\sharp} (z^{-1} x) \; d\mu_G (z). 
\]
Direct calculations next entail that
\[
H_{\sharp} (x) \lesssim \sup_{v \in B_1 (e)} \int_G (F_1)_{\sharp} (z^{-1} v^{-1}) (F_2)_{\sharp} (z^{-1} x ) \; d\mu_G (z) 
\leq \int_{G} ((F_1)_{\sharp})^{\sharp} (z^{-1}) (F_2)_{\sharp} (z^{-1} x) \; d\mu_G (z)
\]
and
\[
(H_{\sharp})^{\sharp} (x) \lesssim \int_G ((F_1)_{\sharp})^{\sharp} (z^{-1}) ((F_2)_{\sharp})^{\sharp} (z^{-1} x) \; d\mu_G (z).
\]
Using the submultiplicativity $w_{\alpha} (x) \leq w_{\alpha} (z) w_{\alpha} (z^{-1} x)$ 
and Fubini's theorem gives
\begin{align*}
\| H \|_{\WRst} 
 &\lesssim \int_G w_{\alpha} (x) \int_G ((F_1)_{\sharp})^{\sharp} (z^{-1}) ((F_2)_{\sharp})^{\sharp} (z^{-1} x) \; d\mu_G (z) d\mu_G (x) \\ 
&\leq  \int_G  ((F_1)_{\sharp})^{\sharp} (z^{-1}) w_{\alpha} (z) \int_G ((F_2)_{\sharp})^{\sharp} (z^{-1} x) w_{\alpha} (z^{-1} x)\; d\mu_G (x) d\mu_G (z) \\
&= \| ((F_2)_{\sharp})^{\sharp} \|_{L^1_{w_{\alpha}}} \int_G ((F_1)_{\sharp})^{\sharp} (z) w_{\alpha} (z^{-1})  \; d\mu_G (z) \\
&= \| F_1 \|_{\WRstw} \|F_2 \|_{\WRstw}, 
\end{align*}
as desired.
\end{proof}

\subsection{$\ell^p$-stability of matrices}
The following result elaborates
on Sj\"{o}strand's Wiener-type lemma \cite{sjostrand1995wiener}. 
Variations of this result have been derived multiple times 
\cite{aldroubi2008slanted, tessera2010left, shin2009stability, shin2019polynomial}, 
but none of these seem to be directly applicable for a version required for our purposes. 
We present  a version valid on homogeneous groups. 
The proof structure follows \cite[Proposition A.1]{groechenig2015deformation} 
very closely. However, in contrast to \cite[Proposition A.1]{groechenig2015deformation}, 
in the non-commutative case a strictly polynomial weight is assumed. 

\begin{proposition} \label{prop:sjostrand-wienertype}
Let $\Lambda, \Gamma \subset G$ be relatively separated subsets 
in a homogeneous group $G$, with homogeneous dimension $Q$.  
Let $\alpha \geq Q$. Suppose that
 $A \in \mathbb{C}^{\Lambda \times \Gamma}$ is a matrix 
for which there exists  $\Theta \in \WRstw(G)$ such that
\begin{align} \label{eq:wiener_envelope}
|A_{\lambda, \gamma} | \leq \Theta (\lambda^{-1} \gamma), 
\quad \lambda \in \Lambda, \gamma \in \Gamma. 
\end{align}
Moreover, suppose there exists  $p \in [1,\infty]$ and $C = C(p) > 0$ such that, 
for all $c \in \ell^p (\Gamma)$, 
\begin{align} \label{eq:sjostrand-wiener-assumption}
C \|c \|_{\ell^p} \leq \| A c \|_{\ell^p}. 
\end{align}
Then there exists  $C' > 0$ such that, for all $q \in [1,\infty]$ and all $c \in \ell^q (\Gamma)$,
\begin{align*}
C' \|c \|_{\ell^q} \leq \| A c \|_{\ell^q}. 
\end{align*}
\end{proposition}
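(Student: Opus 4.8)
The plan is to manufacture a left inverse of $A$ that lies in the weighted Schur algebra of Theorem~\ref{thm:schur_algebra}, since any such left inverse is automatically bounded on every $\ell^q(\Gamma)$ and thus yields the lower bound uniformly in $q$. First I would record that the envelope condition \eqref{eq:wiener_envelope} places $A$ in a rectangular analogue of the Schur class: for $\alpha \geq Q$ and $\Lambda,\Gamma$ relatively separated, the weighted row and column sums of $A$ are dominated by a sampling of $w_{\alpha}\Theta$ over a relatively separated set, which is controlled by $\rel(\Lambda)$, $\rel(\Gamma)$ and $\|\Theta\|_{\WRstw}$; in particular $A$ and $A^{*}$ act boundedly on all $\ell^q$ by Schur's test. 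I would then form the Gramian $T := A^{*}A \in \mathbb{C}^{\Gamma\times\Gamma}$ and bound its entries by $|T_{\gamma,\gamma'}| \leq \sum_{\lambda\in\Lambda}\Theta(\lambda^{-1}\gamma)\Theta(\lambda^{-1}\gamma')$. Writing $x=\gamma^{-1}\gamma'$, this is exactly the quantity estimated in Lemma~\ref{lem:aux_lemma} with $F_1=F_2=\Theta$, so $T$ admits an envelope in $\WRstw(G)$ and hence $T \in \mathcal{A}^1_{v_{\alpha}}(\Gamma)$.

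The easy half of the argument is then the reduction to invertibility of $T$ on $\ell^2$. If $T$ is invertible on $\ell^2(\Gamma)$, then, since $T$ is positive and self-adjoint, this is equivalent to $A$ being bounded below on $\ell^2$; by inverse-closedness in Theorem~\ref{thm:schur_algebra} we get $T^{-1}\in\mathcal{A}^1_{v_{\alpha}}(\Gamma)$, whence $A^{\dagger}:=T^{-1}A^{*}$ is a left inverse of $A$ lying in the rectangular Schur class. (Alternatively, one applies the pseudo-inverse closedness of Theorem~\ref{thm:schur_algebra} directly to $A$.) Consequently $A^{\dagger}$ is bounded as a map $\ell^q(\Lambda)\to\ell^q(\Gamma)$ for every $q$, and from $c=A^{\dagger}Ac$ one reads off $\|c\|_{\ell^q}\lesssim\|Ac\|_{\ell^q}$ for all $q\in[1,\infty]$, which is the desired conclusion with $C'=\|A^{\dagger}\|^{-1}$.

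It therefore remains to upgrade the hypothesis from $\ell^p$ to $\ell^2$, and this is where I expect the main obstacle to lie. The assumption \eqref{eq:sjostrand-wiener-assumption} says $A$ is bounded below on $\ell^p$, which should be equivalent to $T$ being invertible on $\ell^p(\Gamma)$: invertibility of $T$ trivially gives a left inverse $T^{-1}A^{*}$, while conversely the bounded-belowness of $A$ together with the surjectivity of $A^{*}$ on the dual space (closed range theorem) forces $T$ to be bijective on $\ell^p$. The point is then that $\mathcal{A}^1_{v_{\alpha}}(\Gamma)$ is a symmetric Banach $*$-algebra that is inverse-closed in $\mathcal{B}(\ell^2(\Gamma))$ and acts boundedly on every $\ell^q(\Gamma)$; by the standard spectral-invariance machinery this makes the spectrum of the fixed element $T$ independent of the representation, so $0\notin\sigma_{\ell^p}(T)$ forces $0\notin\sigma_{\ell^2}(T)$ and $T$ is invertible on $\ell^2$. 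The delicate step, which I would isolate as a separate lemma, is precisely this clean equivalence between $\ell^p$-bounded-belowness of the rectangular $A$ and $\ell^p$-invertibility of the square $T$, together with the extension of spectral invariance from $\ell^2$ to all $\ell^q$; once these are in place the three steps combine to give the claim.
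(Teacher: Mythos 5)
There is a genuine gap, and it sits exactly where you yourself placed the ``delicate step'': the passage from the $\ell^p$ hypothesis to $\ell^2$. Your first two steps are fine — the Gramian $T = A^*A$ satisfies $|T_{\gamma,\gamma'}| \leq \sum_{\lambda \in \Lambda} \Theta(\lambda^{-1}\gamma)\,\Theta(\lambda^{-1}\gamma')$, which Lemma \ref{lem:aux_lemma} dominates by an envelope in $\WRstw(G)$, so $T \in \mathcal{A}^1_{v_{\alpha}}(\Gamma)$; and \emph{if} $T$ is invertible on $\ell^2(\Gamma)$, then Theorem \ref{thm:schur_algebra} gives $T^{-1} \in \mathcal{A}^1_{v_{\alpha}}(\Gamma)$, so $A^{\dagger} = T^{-1}A^*$ is a left inverse bounded on every $\ell^q(\Lambda)$ and the conclusion follows. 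But this only proves the proposition for $p = 2$, where positivity of $T$ on the Hilbert space makes ``$A$ bounded below'' equivalent to ``$T$ invertible.'' For general $p$ both halves of your proposed reduction fail. First, ``$A$ bounded below on $\ell^p$ $\Rightarrow$ $T$ invertible on $\ell^p$'' does not follow from the closed range theorem: bounded-belowness yields surjectivity of the \emph{Banach-space} adjoint, which is the matrix adjoint $A^*$ acting on the dual $\ell^{p'}(\Lambda)$ (and for $p=\infty$ not even that, since $(\ell^{\infty})' \neq \ell^1$), whereas you need surjectivity of $A^*A$ on $\ell^p(\Gamma)$ itself; surjectivity of $A^*$ on all of $\ell^{p'}(\Lambda)$ says nothing about $A^*$ restricted to the subspace $A(\ell^p(\Gamma))$. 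You get injectivity of $T$ on $\ell^p$, but not surjectivity, and outside $p=2$ there is no positivity argument to convert a one-sided bound into invertibility.

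Second, and more fundamentally, the ``standard spectral-invariance machinery'' points in the wrong direction. Theorem \ref{thm:schur_algebra} asserts that $\mathcal{A}^1_{v_{\alpha}}(\Gamma)$ is inverse-closed in $\mathcal{B}(\ell^2(\Gamma))$: from $\ell^2$-invertibility one may conclude invertibility on every $\ell^q$. What you need is the converse implication — from $\ell^p$-invertibility (or bounded-belowness) with $p \neq 2$ to $\ell^2$-invertibility — and that implication \emph{is} the content of Proposition \ref{prop:sjostrand-wienertype}; invoking it here is circular. This $p$-independence for localized matrices is a nontrivial theorem of Sj\"ostrand/Tessera/Shin--Sun type, not a formal consequence of symmetry or inverse-closedness, and the naive fixes fail: duality gives invertibility of the Hermitian $T$ on $\ell^{p'}$, but interpolation does not preserve invertibility, and the inverses on $\ell^p$ and $\ell^{p'}$ need not coincide on the intersection without an injectivity statement on $\ell^p + \ell^{p'}$ that you do not have. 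The paper supplies the missing ingredient by a genuinely different mechanism: a dilated partition of unity $\psi_k^{\varepsilon}$, commutator estimates for $[A, \psi_k^{\varepsilon}]$, and the proof that the Schur norm of the auxiliary matrix $V^{\varepsilon}$ tends to $0$ as $\varepsilon \to 0^+$ — this is where the hypotheses $\alpha \geq Q$ and $\Theta \in \WRstw(G)$ are actually consumed — which localizes the lower bound \eqref{eq:sjostrand-wiener-assumption} in an exponent-free way. To salvage your algebraic route you would have to prove your ``separate lemma'' by essentially this commutator argument, or import it from \cite{tessera2010left,shin2009stability}, at which point the detour through $T$ becomes unnecessary. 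A minor additional point: the pseudo-inverse closedness in Theorem \ref{thm:schur_algebra} concerns elements of $\mathcal{A}^1_{v_{\alpha}}(\Gamma)$, i.e.\ square matrices over $\Gamma$, so it cannot be applied ``directly to $A$'' as your parenthetical suggests.
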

\begin{proof}
We prove the result in several steps. 

\textbf{Step 1.} (Partition of unity). 
Let $\{ B_{1/2} (x_k) \}_{k \in \mathbb{N}}$ 
be a maximal family of disjoint balls, 
with centers $X = \{x_k \}_{k \in \mathbb{N}} \subseteq G$. 
Then, by maximality, the balls $\{B_1 (x_k) \}_{k \in \mathbb{N}}$ form a cover of $G$. 
Moreover, for arbitrary  $C_b \geq 1$, no point $x \in G$ belongs to more than 
$\lceil (4 C_b)^Q \rceil$ many of the balls $\{B_{C_b} (x_n)\}_{n \in \mathbb{N}}$
- see for example \cite[Lemma 5.7.5]{fischer2016quantization}.
Associated to $\{ x_k \}_{k \in \mathbb{N}}$, there exists a
partition of unity $\{ \psi_k \}_{k \in \mathbb{N}}$ of functions $\psi_k \in C_c^{\infty} (G)$ 
satisfying $\supp \psi_k \subset B_2 (x_k)$,  $\psi_k (G) \subset [0,1]$ 
and $ \sum_{k \in \mathbb{N}} \psi_k (\cdot) = 1$. 
For fixed $\varepsilon \in (0,1]$, let $\psike := \psi_k \circ D_{\varepsilon}$. 
Then $\supp \psike \subseteq D_{1/\varepsilon} (B_2 (x_k)) = B_{2/\varepsilon} (D_{1/\varepsilon} (x_k))$ 
and $\Psi^{\varepsilon} := \sum_{k \in \mathbb{N}} (\psike)^2 \asymp 1$, 
with constants independent of $\varepsilon$. 
Moreover, by an application of the mean value theorem \cite[Proposition 3.1.46]{fischer2016quantization}, 
it follows that $|\psike (x) - \psike (y) | \lesssim \varepsilon^N |x^{-1} y|^N_G$ for some $N \in \mathbb{N}$ 
and all $x,y \in G$. Combined with $\psike \leq 1$, this gives
\begin{align} \label{eq:lipschitz_bound}
|\psike (x) - \psike (y) | \lesssim \min \{1, \varepsilon^N |x^{-1} y|^N_G \}
\end{align}
for all $x,y \in G$. 

\textbf{Step 2.} (Norm equivalence).
Let $p \in [1,\infty]$. For fixed $\varepsilon \in (0, 1]$ and $k \in \mathbb{N}$, 
define the multiplication by $\psike|_{\Gamma} \in \ell^{\infty} (\Gamma)$ as the operator
$\psike : \ell^p(\Gamma) \to \ell^p (\Gamma)$ given by $(\psike c)_{\gamma} = \psike(\gamma) c_{\gamma}$.
We show that, for every $q \in [1,\infty]$ and $c \in \ell^q (\Gamma)$, 
\begin{align} \label{eq:wiener-claim2}
\big\| \big( \| \psike c \|_{\ell^p} \big)_{k \in \mathbb{N}} \big\|_{\ell^q}
\asymp \| c \|_{\ell^q},
\end{align} 
with constants independent of $p,q \in [1,\infty]$. 
Note that, for every fixed $\varepsilon > 0$, we have
 $N_{\varepsilon} := \sup_{k \in \mathbb{N}} \# \supp (\psike|_{\Gamma}) < \infty$ 
 since $\Gamma \subset G$ is relatively separated. 
From this, it follows that, for every $q \in [1,\infty]$ and $c \in \ell^{\infty} (\Gamma)$, 
we have $\| \psike c \|_{\ell^p} \leq \| \psike c \|_{\ell^1} \leq N_{\varepsilon} \| \psike c \|_{\infty} \leq N_{\varepsilon} \| \psike c \|_{\ell^q}$ and
$\|\psike c\|_{\ell^q} \leq N_{\varepsilon} \| \psike c \|_{\ell^p}$. 
Consequently, for all $c \in \ell^q (\Gamma)$,
\begin{align} \label{eq:step1-claim2}
\big\| \big( \| \psike c \|_{\ell^p} \big)_{k \in \mathbb{N}} \big\|_{\ell^q}
\asymp 
\big\| \big( \| \psike c \|_{\ell^q} \big)_{k \in \mathbb{N}} \big\|_{\ell^q},
\end{align}
with constants independent of $p,q \in [1,\infty]$. 
Since $X = \{x_k\}_{k \in \mathbb{N}}$ is relatively separated, it follows
$
\eta_{\varepsilon} := \sup_{x \in G} \# \{ k \in \mathbb{N}  :
 \psike (x) \neq 0 \} \leq  \sup_{x \in G} \# \{ k \in \mathbb{N}  :
 D_{\varepsilon} (x) \in B_2(x_k)  \} 
 = \sup_{x \in G} \# \{ k \in \mathbb{N}  :
x_k \in B_2( D_{\varepsilon} (x))  \} \lesssim \rel(X).
$ 
Therefore $\eta := \sup_{\varepsilon \in (0,1]} \eta_{\varepsilon} < \infty$.
Hence 
$
1 = \sum_{k \in \mathbb{N}} \psike (x) 
\leq \eta \; \sup_{k \in \mathbb{N}} \psike (x)
$
for all $x \in G$, and 
\[
\frac{1}{\eta} \leq \sup_{k \in \mathbb{N}} \psike (x) 
\leq \bigg( \sum_{k \in \mathbb{N}} (\psike(x))^q \bigg)^{\frac{1}{q}} 
\leq \sum_{k \in \mathbb{N}} \psike (x) = 1. 
\]
Therefore, if $q \in [1,\infty)$ and $c \in \ell^q (\Gamma)$, then
\[
\frac{1}{\eta^q} \sum_{\gamma \in \Gamma} |c_{\gamma} |^q 
\leq \sum_{\gamma \in \Gamma} \sum_{k \in \mathbb{N}} (\psike(\gamma))^q |c_{\gamma}|^q 
\leq \sum_{\gamma \in \Gamma} |c_{\gamma} |^q.
\]
Similarly, if $q = \infty$ and $c \in \ell^{\infty} (\Gamma)$,
then 
\[
\eta^{-1} \sup_{\gamma \in \Gamma} |c_{\gamma}| 
\leq \sup_{\gamma \in \Gamma} \sup_{k \in \mathbb{N}} \psike (\gamma) |c_{\gamma}|
\leq \sup_{\gamma \in \Gamma} |c_{\gamma}|.
\]
Thus, for any $q \in [1,\infty]$, 
\begin{align} \label{eq:norm-equivalence2}
\big\| \big( \|\psike c \|_{\ell^q} \big)_{k \in \mathbb{N}} \big\|_{\ell^q} 
\asymp \| c \|_{\ell^q},
\end{align}
with constants independent of $q \in [1,\infty]$. 
Combining \eqref{eq:step1-claim2} and \eqref{eq:norm-equivalence2} yields \eqref{eq:wiener-claim2}. 

\textbf{Step 3.} (Auxiliary matrix $V^{\varepsilon}$).
Let $A \in \mathbb{C}^{\Lambda \times \Gamma}$. 
Consider $[A, \psike] := A \psike - \psike A$.  
Assume, without loss of generality, 
that \eqref{eq:sjostrand-wiener-assumption} holds with $C = 1$. We set
$K := \max_x (\Psi^{\varepsilon} (x))^{-1}$ and $V_{j,k}^{\varepsilon}
:= \| [A, \psike]\psije\|_{\schur(\Gamma \to \Lambda)}$ for  $j, k \in \mathbb{N}$,  
where 
\[
\| B \|_{\schur(J \to I)} := \max \bigg\{ \sup_{i \in I} \sum_{j \in J} |B_{j,i}|, \sup_{j \in J} \sum_{i \in I} |B_{j,i}| \bigg\}
\]
 denotes the Schur norm of a matrix $B \in \mathbb{C}^{I \times J}$. 
Then a direct calculation entails 
\begin{align*} 
\| \psike c \|_{\ell^p} &\leq \| \psike A c \|_{\ell^p } + \sum_{j \in \mathbb{N}} \| [A, \psike] \psije (\Psi^{\varepsilon})^{-1} \psije c \|_{\ell^p} \\
&\leq \| \psike A c \|_{\ell^p} + K \sum_{j \in \mathbb{N}} V_{j,
  k}^{\varepsilon} \| \psije c \|_{\ell^p}. \numberthis \label{eq:multiplier_estimate}
\end{align*} 

\textbf{Step 4.} (Uniform convergence of the entries $V^{\varepsilon}_{j,k}$).
We claim that
\begin{align} \label{eq:convergence_entries}
\sup_{j,k} V_{j,k}^{\varepsilon} \to 0 \hspace{5pt} \mbox{ as } \hspace{5pt} \varepsilon \to 0^+.
\end{align}
For this, note that 
$
([A, \psike] \psije)_{\lambda, \gamma} 
= - A_{\lambda, \gamma} \psije (\gamma) 
\big( \psike(\lambda) - \psike(\gamma) \big). 
$
Combining \eqref{eq:wiener_envelope} and \eqref{eq:lipschitz_bound} yields 
\[
\big| ([A, \psike] \psije)_{\lambda, \gamma} \big| 
\lesssim \Theta(\lambda^{-1} \gamma) \min \big\{1, \varepsilon^N |\lambda^{-1} \gamma|_G^N \big\}.
\]
Set $\Theta^{\varepsilon} (x) := \Theta(x) \min\{1, \varepsilon^N |x|_G^N \}$. 
Using the estimate $\sum_{\gamma \in \Gamma} |f (\gamma)| \lesssim \rel(\Gamma) \| f \|_{\WLC}$ 
then gives
\[
V_{j,k}^{\varepsilon} \lesssim \max \{ \rel (\Gamma), \rel (\Lambda) \}. 
\| \Theta^{\varepsilon} \|_{\WRst},
\]
An application of Lebesgue's dominated convergence theorem therefore yields \eqref{eq:convergence_entries}. 

\textbf{Step 5.} (Refined estimates of the entries $V^{\varepsilon}_{j,k}$).
For estimating  $V_{j,k}^{\varepsilon}$, fix $j, k \in \mathbb{N}$. 
Note that $\psije(\gamma) \psike(\gamma) \neq 0$ only if $D_\epsilon (\gamma )\in B_2 (x_j) \cap B_2 (x_k)$.
Thus, if $|x_k^{-1} x_j |_G > 4$, then $ B_2 (x_j) \cap B_2 (x_k) = \emptyset$, and
the entries of $[A, \psike]\psije$ simplify to 
\[
([A, \psike]\psije)_{\lambda, \gamma} = -A_{\lambda, \gamma} \psije(\gamma) \psike(\lambda).
\]
Together with the envelope assumption \eqref{eq:wiener_envelope}, this gives
\[
\sup_{\lambda \in \Lambda} \sum_{\gamma \in \Gamma} | ([A, \psike]\psije)_{\lambda, \gamma} |
\leq \sup_{\lambda \in \Lambda} \sum_{\gamma \in \Gamma} \Theta (\lambda^{-1} \gamma) \psije(\gamma) \psike(\lambda) 
\]
for all $j \in \mathbb{N}$ satisfying $|x_k^{-1} x_j|_G > 4$. Similarly, it follows that
\[
\sup_{\gamma \in \Gamma} \sum_{\lambda \in \Lambda} | ([A, \psike]\psije)_{\lambda, \gamma} |
\leq \sup_{\gamma \in \Gamma} \sum_{\gamma \in \Gamma} \Theta^{\vee} (\gamma^{-1} \lambda) \psije(\gamma) \psike(\lambda), 
\]
yielding the desired estimates for $V_{j,k}^{\varepsilon}$. 

\textbf{Step 6.} (Schur norm of $V^{\varepsilon}$.)
In this step, we will show that
\begin{align} \label{eq:two_limits}
 \sup_{k \in \mathbb{N}} \sum_{j \in \mathbb{N}} V_{j,k}^{\varepsilon} \to 0
 \quad \text{and} \quad
\sup_{j \in \mathbb{N}} \sum_{k \in \mathbb{N}} V_{j,k}^{\varepsilon}  \to 0
\end{align}
as $\varepsilon \to 0^+$, yielding
 that $\| V^{\varepsilon} \|_{\schur(\mathbb{N} \to \mathbb{N})} \to 0$ 
as $\varepsilon \to 0^+$. 
We only show the first limit in \eqref{eq:two_limits}; 
the second limit follows analogously by interchanging the role of $j,k \in \mathbb{N}$. 

Fix $k \in \mathbb{N}$. Then 
\begin{align} \label{eq:split_norm}
\sum_{j \in \mathbb{N}} V_{j,k}^{\epsilon} &= \sum_{j \in \mathbb{N} \; : \;|x_k^{-1} x_j|_G \leq 5}  V_{j,k}^{\epsilon} +  \sum_{j \in \mathbb{N} \; : \; |x_k^{-1} x_j |_G > 5} V_{j,k}^{\epsilon} . 
\end{align}
The first series in the right-hand side 
of \eqref{eq:split_norm} can be estimated by
\[
\sum_{j \in \mathbb{N} \; : \;|x_k^{-1} x_j|_G \leq 5}  V_{j,k}^{\epsilon} 
\leq \# \{ j \in \mathbb{N} \; : \; |x_k^{-1} x_j |_G \leq 5 \} \; \sup_{j \in \mathbb{N}} V_{j,k}^{\epsilon} 
\lesssim \sup_{j \in \mathbb{N}} V_{j,k}^{\epsilon},
\]
with a constant independent of $k$. 
Thus $\sum_{j \in \mathbb{N} \; : \;|x_k^{-1} x_j|_G \leq 5}  V_{j,k}^{\epsilon}  \to 0$ as $\epsilon \to 0^+$ by \eqref{eq:convergence_entries}.  

For fixed $j, k \in \mathbb{N}$, 
choose $\lambda_{j,k} \in \Lambda \cap \supp \psike $ such that
\begin{align*}
\sup_{\lambda \in \Lambda} \sum_{\gamma \in \Gamma} \Theta (\lambda^{-1} \gamma) \psije(\gamma) \psike(\lambda)
&= \sum_{\gamma \in \Gamma} \Theta (\lambda_{j,k}^{-1} \gamma) \psije(\gamma) \psike(\lambda_{j,k}) \\
&= \sum_{\lambda \in \Lambda} \delta_{\lambda_{j,k}} (\lambda) 
\sum_{\gamma \in \Gamma} \Theta (\lambda^{-1} \gamma) \psije(\gamma) \psike(\lambda). 
\end{align*}
Hence, defining $
T := \sum_{j \in \mathbb{N} \; : \; |x_k^{-1} x_j |_G > 5} \sup_{\lambda \in \Lambda} \sum_{\gamma \in \Gamma} \Theta (\lambda^{-1} \gamma) \psije(\gamma) \psike(\lambda) 
$ gives
\begin{align*}
T = \sum_{j \in \mathbb{N} \; : \; |x_k^{-1} x_j |_G > 5} \sum_{\lambda \in \Lambda} \sum_{\gamma \in \Gamma} \Theta (\lambda^{-1} \gamma) \psije(\gamma) \psike(\lambda) \delta_{\lambda_{j,k}} (\lambda).
\end{align*}
Interchanging sums and using that
 $\sum_j \delta_{\lambda_{j,k}} (\lambda) \psije(\gamma) \leq 1$ for $\lambda \in \Lambda, \;\gamma \in \Gamma$
 yields
 \begin{align*}
T \leq  \sum_{\lambda \in \Lambda} \sum_{\gamma \in \Gamma} \Theta (\lambda^{-1} \gamma) \psike(\lambda) 
\lesssim \rel(\Gamma) \| \Theta \|_{W(L^{\infty}, L^1)} \sum_{\lambda \in \Lambda} \psike (\lambda) 
\lesssim \varepsilon^{-Q} \| \Theta \|_{W(L^{\infty}, L^1)}, \numberthis \label{eq:amalgam_epsilon}
 \end{align*}
where the last step follows from $\sum_{\lambda \in \Lambda} \psike(\lambda) \leq \# \big( \Lambda \cap B_{2/\varepsilon} (D_{1/\varepsilon} (x_k)) \big) \lesssim \varepsilon^{-Q} \rel(\Lambda)$.  

For $\varepsilon \in (0,1]$, write $\Theta = \Theta_1 + \Theta_2$, where $\Theta_1 := \Theta \cdot  \mathds{1}_{B_{1/\varepsilon} (e)}$ 
and $\Theta_2 := \Theta \cdot (\mathds{1}_G - \mathds{1}_{B_{1/\varepsilon} (e)})$. 
If $\psije (\gamma) \neq 0$ and $\psike (\lambda) \neq 0$, then $|D_{\varepsilon} (\lambda^{-1} \gamma)|_G \geq |x_k^{-1} x_j|_G - 4$. Thus, if $|x_k^{-1} x_j|_G > 5$, then $|\lambda^{-1} \gamma|_G > 1/\varepsilon$, 
yielding that $\Theta_1 (\lambda^{-1} \gamma) = 0$. Combining this with  \eqref{eq:amalgam_epsilon} gives
\begin{align*}
T
\lesssim  \varepsilon^{-Q} \| \Theta \cdot (\mathds{1}_G - \mathds{1}_{B_{1/\varepsilon} (e)} ) \|_{W(L^{\infty}, L^1)}. 
 \numberthis \label{eq:amalgam_epsilon2}
\end{align*}
For a sequence $\{x_n \}_{n \in \mathbb{N}}$ of points $x_n \in G$ as in Step 1, the norm
\[
\| f \|_{W(L^{\infty}, \ell^1_{w_{\alpha}})} := \sum_{n \in \mathbb{N}} 
\; (1+|x_n|_G)^{\alpha} \| f \|_{L^{\infty} (B_1 (x_n))}
\]
defines an equivalent norm on $W(L^{\infty}, L^1_{w_{\alpha}}) (G)$. 
Therefore
\begin{align*}
\| \Theta \cdot (\mathds{1}_G - \mathds{1}_{B_{1 / \varepsilon} (e)} ) \|_{W(L^{\infty}, L^1)} 
&\leq \sum_{|x_n|> \frac{1}{\varepsilon} - 1} \| \Theta \|_{L^{\infty} (B_1 (x_n))} (1+ |x_n|_G)^{\alpha} (1+|x_n|_G)^{-\alpha} \\
&\leq \varepsilon^{\alpha} \sum_{|x_n|> \frac{1}{\varepsilon} - 1} \| \Theta \|_{L^{\infty} (B_1 (x_n))} (1+ |x_n|_G)^{\alpha}. 
\end{align*}
Combining this with the estimate \eqref{eq:amalgam_epsilon2} thus gives
\begin{align*}
T &= \sum_{j \in \mathbb{N} \; : \; |x_k^{-1} x_j |_G > 5} \sup_{\lambda \in \Lambda} \sum_{\gamma \in \Gamma} \Theta (\lambda^{-1} \gamma) \psije(\gamma) \psike(\lambda) \\
&\lesssim \varepsilon^{\alpha - Q} \sum_{|x_n|> \frac{1}{\varepsilon} - 1} \| \Theta \|_{L^{\infty} (B_1 (x_n))} (1+ |x_n|_G)^{\alpha},
\end{align*}
with the right-hand side tending to $0$ as $\varepsilon \to 0^+$ 
since $\Theta \in \WRstw (G)$ and $\alpha \geq Q$.
By interchanging the role of $\Gamma$ and $\Lambda$, it follows similarly that
\[
\sum_{j \in \mathbb{N} \; : \; |x_k^{-1} x_j |_G > 5} \sup_{\lambda \in \Lambda} \sum_{\gamma \in \Gamma} \Theta^{\vee} (\gamma^{-1} \lambda) \psije(\gamma) \psike(\lambda) 
\to 0 \quad \text{as} \quad \varepsilon \to 0^+. 
\]
Hence, by Step 5 and combining both limits gives
$
\sum_{ j \in \mathbb{N} \; : \; |x_k^{-1} x_j |_G > 5} V_{j,k}^{\epsilon} \to 0
$
as $\varepsilon \to 0^+$. 
This proves the first limit in \eqref{eq:two_limits}. 

\textbf{Step 7.} (Conclusion.) 
By Step 6, there exists an $\varepsilon > 0$ such that, for all $a \in \ell^q (\mathbb{N})$, 
we have
$
\| V^{\varepsilon} a \|_{\ell^q} \leq (2K)^{-1} \| a \|_{\ell^q} 
$
uniformly for all $q \in [1,\infty]$. 
Applying this in \eqref{eq:multiplier_estimate} yields
\begin{align} \label{eq:norm-equivalence1}
\frac{1}{2} \, \big\| \big( \| \psike c \|_{\ell^p} \big)_{k \in \mathbb{N}} \big\|_{\ell^q}
\leq \big\| \big( \| \psike Ac \|_{\ell^p} \big)_{k \in \mathbb{N}} \big\|_{\ell^q}. 
\end{align}
Combining the norm equivalences \eqref{eq:norm-equivalence1} 
and \eqref{eq:wiener-claim2} completes the proof. 
\end{proof}

\subsection{Existence of a localized reference frame}
In order to apply results on the spectrum of matrices to problems in frame theory we need to know that there exist an adequate
(reference) frame. The following proposition serves that purpose, 
see also the first sections of \cite{MR2765595} and \cite[Section
7]{groechenig2018strict}. The result improves the existence results in
\cite{feichtinger1989banach1,groechenig1991describing} 
by adding fine information about the canonical dual frame. 

\begin{proposition} \label{prop:canonical_dual}
Let $(\pi, \Hpi)$ be a 
projective relative discrete series representation of a homogeneous group $G$. 
Suppose that $h \in \Hsmooth$.
Then there exists a relatively separated and relatively dense set
$\Gamma \subset G$ such that $\pi (\Gamma) h$ forms a $p$-frame for 
$\CoLp$ for all $p \in [1, \infty]$. The canonical dual frame 
$\{\widetilde{h}_{\gamma} \}_{\gamma \in \Gamma}$  
of $\pi (\Gamma)h$ in $\Hpi$ is $\WRstw$-\emph{localized} for every $\alpha \geq 0$ in the sense that 
there exists a $\widetilde{\Theta} \in \WRstw$ such that
\begin{align} \label{eq:canonical_dual_localized}
|V_h \widetilde{h}_{\gamma} (x)| \leq  \widetilde{ \Theta}(\gamma^{-1} x)
\end{align}
for all $\gamma \in \Gamma$ and $x \in G$.  
As a consequence, any $f \in \CoLp$ admits an expansion
\[
f = \sum_{\gamma \in \Gamma} \langle f, \pi (\gamma) h \rangle \widetilde{h}_{\gamma} 
= \sum_{\gamma \in \Gamma} \langle f, \widetilde{h}_{\gamma} \rangle \pi(\gamma) h
\]
with norm convergence if $p \in [1,\infty)$ and weak$^*$-convergence in $\Honedual$, otherwise. 
Moreover, 
\begin{align} \label{eq:norm_equiv_canonical}
\| f \|_{\CoLp} 
\asymp \| \{ \langle f, \widetilde{h}_{\gamma} \rangle \}_{\gamma \in \Gamma} \|_{\ell^p} 
\end{align}
 for all $f \in \CoLp$. 
\end{proposition}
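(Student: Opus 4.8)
The plan is to build the reference frame from coorbit theory and then extract the fine localization of its canonical dual from the inverse-closedness of the weighted Schur algebra. First I would invoke the atomic decomposition of Feichtinger and Gr\"ochenig \cite{feichtinger1989banach1}: for a sufficiently fine relatively separated and relatively dense set $\Gamma \subset G$, the system $\pi(\Gamma)h$ is a frame for $\Hpi = \Co(L^2(G))$, and Theorem~\ref{thm:frame_Riesz_universal} upgrades this at once to a $p$-frame for $\CoLp$ for every $p \in [1,\infty]$, since $h \in \Hsmooth$ and $\Gamma$ is relatively separated. Writing $D$ and $C = D^*$ for the synthesis and analysis operators on $\Hpi$, $S = DC$ for the frame operator and $\mathcal{G} = CD$ for the Gramian on $\ell^2(\Gamma)$, the canonical dual vectors are $\widetilde{h}_\gamma = S^{-1}\pi(\gamma)h$. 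The identity $S^{-1}D = D\mathcal{G}^{\dagger}$, which follows from $\ker \mathcal{G} = \ker D$ and $\mathcal{G}\mathcal{G}^{\dagger} = P_{\operatorname{ran}\mathcal{G}}$, then expresses the dual frame through the Moore--Penrose pseudo-inverse of the Gramian,
\[
\widetilde{h}_\gamma = \sum_{\gamma' \in \Gamma} (\mathcal{G}^{\dagger})_{\gamma',\gamma}\, \pi(\gamma')h .
\]

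The second step is to locate $\mathcal{G}$ and $\mathcal{G}^{\dagger}$ in the Schur algebra. Since $h \in \Hsmooth$, the matrix coefficient $\Phi := V_h h$ lies in the Schwartz space $\mathcal{S}(G)$, hence in $\WRstw(G)$ for every $\alpha \geq 0$; as $|\langle \pi(\gamma')h, \pi(\gamma)h\rangle| = |\Phi(\gamma^{-1}\gamma')|$, the relative separatedness of $\Gamma$ yields a uniform envelope and places $\mathcal{G}$ in the weighted Schur algebra $\mathcal{A}^1_{v_\alpha}(\Gamma)$ for all $\alpha > 0$. Because $\pi(\Gamma)h$ is a frame, $\mathcal{G}$ has closed range and is bounded below on it, so $\mathcal{G}^{\dagger} \in \mathcal{B}(\ell^2(\Gamma))$; the pseudo-inverse closedness asserted in Theorem~\ref{thm:schur_algebra} then forces $\mathcal{G}^{\dagger} \in \mathcal{A}^1_{v_\alpha}(\Gamma)$ for every $\alpha$. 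In particular each entry obeys the pointwise decay $|(\mathcal{G}^{\dagger})_{\gamma',\gamma}| \lesssim_\alpha (1+|\gamma^{-1}\gamma'|_G)^{-\alpha}$.

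The third and decisive step is the passage from the off-diagonal decay of $\mathcal{G}^{\dagger}$ to the amalgam-space envelope \eqref{eq:canonical_dual_localized}. Using $|V_h(\pi(\gamma')h)(x)| = |\Phi(\gamma'^{-1}x)|$, I would estimate
\[
|V_h \widetilde{h}_\gamma(x)| \leq \sum_{\gamma' \in \Gamma} |(\mathcal{G}^{\dagger})_{\gamma',\gamma}|\, |\Phi(\gamma'^{-1}x)| \lesssim_\alpha \sum_{\gamma' \in \Gamma} (1+|\gamma^{-1}\gamma'|_G)^{-\alpha}\, |\Phi(\gamma'^{-1}x)| .
\]
After the substitution $z = \gamma^{-1}x$, the right-hand side is a discrete convolution of a weighted $\ell^1$-sequence indexed by the relatively separated set $\gamma^{-1}\Gamma$ against the amalgam function $|\Phi|$, uniformly in $\gamma$. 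Dominating the counting measure by Haar measure via relative separation reduces this to a continuous convolution, and the convolution relation for $\WRstw$ from Lemma~\ref{lem:aux_lemma} shows that, for $\alpha$ large, it is bounded by a single function $\widetilde{\Theta} \in \WRstw(G)$ of the variable $\gamma^{-1}x$, independent of $\gamma$. This is exactly \eqref{eq:canonical_dual_localized}. I expect this convolution estimate, together with the need to invoke the \emph{pseudo}-inverse --- the frame being overcomplete, $\mathcal{G}$ is not invertible on all of $\ell^2(\Gamma)$ --- to be the main obstacle.

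Finally, the localization \eqref{eq:canonical_dual_localized} makes $\{\widetilde{h}_\gamma\}$ a family of molecules in the sense of \cite{groechenig2009molecules}, so the analysis map $f \mapsto \{\langle f, \widetilde{h}_\gamma\rangle\}$ and the synthesis map $c \mapsto \sum_\gamma c_\gamma \pi(\gamma)h$ are bounded between $\CoLp$ and $\ell^p(\Gamma)$ for all $p$. The reconstruction identities, valid on $\Hpi$, extend to $\CoLp$ by density of $\Hone$ and duality, with norm convergence for $p < \infty$ and weak$^*$-convergence in $\Honedual$ for $p = \infty$; composing the two bounded maps yields the norm equivalence \eqref{eq:norm_equiv_canonical}, the lower bound coming from synthesis applied to the dual coefficients and the upper bound from molecule analysis, cf.\ Proposition~\ref{prop:sjostrand-wienertype}.
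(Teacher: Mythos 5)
Your proposal follows the paper's own proof essentially step by step: existence of the reference frame $\pi(\Gamma)h$ from the atomic decomposition theory of \cite{feichtinger1989banach1, groechenig1991describing}; identification of the expansion coefficients of $\widetilde{h}_\gamma$ with the entries of the Moore--Penrose pseudo-inverse $\mathcal{G}^{\dagger}$ of the Gramian (the paper phrases this as $\widetilde{\mathcal{G}}_{\gamma,\gamma'} = \langle \widetilde{h}_{\gamma'}, \widetilde{h}_{\gamma}\rangle$ with $\widetilde{\mathcal{G}} = \mathcal{G}^{\dagger}$, which coincides with your identity $S^{-1}D = D\mathcal{G}^{\dagger}$); membership $\mathcal{G} \in \mathcal{A}^1_{v_s}(\Gamma)$ from the Schwartz decay of $V_h h$ together with pseudo-inverse closedness (Theorem \ref{thm:schur_algebra}) to get entrywise polynomial decay of $\mathcal{G}^{\dagger}$; the envelope \eqref{eq:canonical_dual_localized} via Lemma \ref{lem:aux_lemma}, whose statement is exactly the uniform-in-$\gamma$ bound you describe; and the molecule machinery of \cite[Lemma 3.4]{groechenig2009molecules} for the expansions and \eqref{eq:norm_equiv_canonical}. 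All of these steps are correct and match the paper.

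There is, however, one genuine logical flaw as written: at the outset you invoke Theorem \ref{thm:frame_Riesz_universal} to upgrade the Hilbert-space frame $\pi(\Gamma)h$ to a $p$-frame for all $p \in [1,\infty]$. In this paper, Theorem \ref{thm:frame_Riesz_universal} is proved in the appendix by applying Theorem \ref{thm:wiener-type-extension} to the reference frame and localized dual \emph{as guaranteed by Proposition \ref{prop:canonical_dual}}; its proof presupposes the very proposition you are proving, so your use of it here is circular. The repair is cheap, and it is exactly what the paper does (and what your final paragraph nearly does on its own): make no $p$-frame claim up front, work only with the $L^2$-frame, establish the localization \eqref{eq:canonical_dual_localized} and the resulting boundedness of $C_{\widetilde{H}}$, $D_{\widetilde{H}}$, $C_{h,\Gamma}$, $D_{h,\Gamma}$ on all $\CoLp$, extend the reconstruction identities from $\Hpi$ to $\CoLp$, and then read off both \eqref{eq:norm_equiv_canonical} and $\| f \|_{\CoLp} \asymp \| C_{h,\Gamma} f \|_{\ell^p}$; the latter equivalence \emph{is} the $p$-frame property, with no appeal to Theorem \ref{thm:frame_Riesz_universal}. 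A last cosmetic point: the closing reference to Proposition \ref{prop:sjostrand-wienertype} is out of place, since that result concerns $\ell^p$-stability of matrices and plays no role in the norm equivalence.
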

\begin{proof}
Let $\alpha \geq 0$. Let $h \in \Hsmooth \subset \Bpi$.
By the main results of \cite{feichtinger1989banach1,groechenig1991describing}
there exists a relatively separated and relatively dense set
$\Gamma \subset G$ such that $\pi(\Gamma) h$ forms a frame for $\Hpi$,
 see also \cite{christensen1996atomic}. Let $\{\widetilde{h}_{\gamma} \}_{\gamma \in \Gamma}$ be the canonical 
dual frame of $\pi (\Gamma)h$ in $\Hpi$.
To show the localization estimate \eqref{eq:canonical_dual_localized}, write
\[
\widetilde{h}_{\gamma} 
= \sum_{\gamma' \in \Gamma} \langle \widetilde{h}_{\gamma}, \widetilde{h}_{\gamma'} \rangle \pi(\gamma') h
\]
and let $\widetilde{\mathcal{G}} \in \mathbb{C}^{\Gamma \times \Gamma}$ be defined by
$\widetilde{\mathcal{G}}_{\gamma, \gamma'} = \langle \widetilde{h}_{\gamma'}, \widetilde{h}_{\gamma} \rangle$. 
Then $\widetilde{\mathcal{G}} = \mathcal{G}^{\dagger}$,
where $\mathcal{G}^{\dagger}$ denotes the pseudo-inverse of the Gramian matrix
$\mathcal{G} \in \mathbb{C}^{\Gamma \times \Gamma}$ of $\pi(\Gamma) h$, 
defined by $\mathcal{G}_{\gamma, \gamma'} = \langle \pi(\gamma') h, \pi(\gamma) h \rangle$. 
Since $V_h  h \in \mathcal{S} (G)$,
 for every $N \in \mathbb{N}$, there exists a $C = C(N) > 0$ such that
\[
|\langle \pi(\gamma) h , \pi (\gamma') h \rangle| = |V_h h (\gamma^{-1} \gamma')| \leq C (1 + |\gamma^{-1} \gamma'|_G )^{-N}
\]
for all $\gamma, \gamma' \in \Gamma$. 
By choosing $N \in \mathbb{N}$ sufficiently large, it follows that
 $\mathcal{G} \in \mathcal{A}^1_{v_{s}} (\Gamma)$ for every $s > 0$, 
where $\mathcal{A}^1_{v_{s}} (\Gamma)$ denotes the weighted Schur algebra over $\Gamma$ 
defined in Appendix \ref{sec:schur_algebra}. 
By Theorem \ref{thm:schur_algebra}, it follows that also $\widetilde{\mathcal{G}} = \mathcal{G}^{\dagger} \in \mathcal{A}^1_{v_{s}} (\Gamma)$. 
As a consequence, this yields in particular that
$|\widetilde{\mathcal{G}}_{\gamma, \gamma'}| \leq v_{s}(\gamma, \gamma')^{-1} = (1 + |\gamma^{-1} \gamma'|_G)^{-s}$ for $\gamma, \gamma' \in \Gamma$. 
Consider $\Theta(x) = w_{-s} (x) = (1+|x|_G)^{-s}$ for $x \in G$. 
Then $(\Theta_{\sharp})^{\sharp} (x) \lesssim (1+|x|_G)^{-s}$ for all $x \in G$, 
with an implicit constant independent of $x$.
Thus, choosing $s > 0$ sufficiently large,
 it follows that $(\Theta_{\sharp})^{\sharp} \in L^1_{w_{\alpha}} (G)$ 
and $\Theta \in \WRstw(G)$. 
Consequently, for any $\gamma \in \Gamma$ and $x \in G$, 
\begin{align*}
|V_h \widetilde{h}_{\gamma} (x)| 
&\leq \sum_{\gamma' \in \Gamma} |\langle \widetilde{h}_{\gamma}, \widetilde{h}_{\gamma'} \rangle || V_h \pi (\gamma') h (x) | \leq  \sum_{\gamma' \in \Gamma} \Theta(\gamma'^{-1} \gamma) |V_h h| (\gamma'^{-1} \gamma  (\gamma^{-1} x)) \\
&\leq \sup_{\gamma \in \Gamma} \sum_{\gamma' \in \Gamma} \Theta(\gamma'^{-1} \gamma) |V_h h| (\gamma'^{-1} \gamma  (\gamma^{-1} x)) 
=: \widetilde{\Theta} (\gamma^{-1} x)
\end{align*}
where $\widetilde{\Theta} \in \WRstw (G)$ by Lemma \ref{lem:aux_lemma}. 
This shows \eqref{eq:canonical_dual_localized}. 

Lastly, by \cite[Lemma 3.4]{groechenig2009molecules} the operators 
$
C_{\widetilde{H}} : \CoLp \to \ell^p (\Gamma), \; 
f \mapsto \{ \langle f, \widetilde{h}_{\gamma} \rangle \}_{\gamma \in \Gamma}
$ 
and $D_{\widetilde{H}} : \ell^p (\Gamma) \to \CoLp, \; 
(c_{\gamma})_{\gamma \in \Gamma} \mapsto \sum_{\gamma \in \Gamma} c_{\gamma} \widetilde{h}_{\gamma}$ 
are well-defined and bounded, 
with $D_{\widetilde{H}}$ satisfying the desired convergence properties.
As a consequence, the identity
$
f = C_{h, \Gamma} D_{\widetilde{H}}f = C_{\widetilde{H}} D_{h, \Gamma}f
$
holds for all $f \in \CoLp$. 
The norm equivalence \eqref{eq:norm_equiv_canonical} follows from
\begin{align*}
\|f \|_{\CoLp} &= \| D_{h, \Gamma} C_{\widetilde{H}} f \|_{\CoLp}  
\leq  \| D_{h,\Gamma} \|_{op} \| C_{\widetilde{H}} f \|_{\ell^p (\Gamma)} \\
&\leq
 \| D_{h, \Gamma} \|_{op} \| C_{\widetilde{H}} \|_{op} \|f \|_{\CoLp}.
\end{align*}
Similarly, it follows that $\|f \|_{\CoLp} \asymp \|C_{h,\Gamma} \|_{\ell^p}$,
showing that $\pi(\Gamma)h$ forms a $p$-frame for $\CoLp$ for all $p \in [1,\infty]$. 
This completes the proof. 
\end{proof}

\subsection{Boundedness below on a subspace}

The reference frame provided by Proposition \ref{prop:canonical_dual} allows us to reformulate certain properties of a general frame in terms of corresponding properties of its Gram matrix. The redundancy of the reference frame poses certain obstacles that can be circumvented with 
an extension of Proposition \ref{prop:sjostrand-wienertype}, as done in \cite[Section 7]{groechenig2018strict}. We quote
\cite[Theorem 7.1]{groechenig2018strict} here 
and repeat its proof in the context of a homogeneous group.

\begin{theorem} \label{thm:wiener-type-extension}
Let $G$ be a homogeneous group with homogeneous dimension $Q$.
Let $\Lambda, \Gamma \subseteq G$ be relatively separated sets, 
and let $P : \ell^2 (\Gamma) \to \ell^2 (\Gamma)$
and $A : \ell^2 (\Gamma) \to \ell^2 (\Lambda)$ be bounded linear operators. 
Suppose that $P$ is idempotent, i.e., $P^2 = P$, and that there exist 
$\Theta_1, \Theta_2 \in \WRstw (G)$ for $\alpha \geq Q+1$ such that
\begin{align*}
|A_{\lambda, \gamma}| \leq \Theta_1 (\lambda^{-1} \gamma), \quad \lambda \in \Lambda, \gamma \in \Gamma
\end{align*}
and 
\begin{align*}
|P_{\gamma,\gamma'}| \leq \Theta_2 (\gamma^{-1} \gamma'), \quad \gamma, \gamma' \in \Gamma. 
\end{align*}
If there is some $p \in [1,\infty]$ and $C = C(p) > 0$ such that, for all $c \in \ell^p (\Gamma)$, 
\begin{align} \label{eq:wiener-type_somep}
C \|Pc \|_{\ell^p} \leq \|A P c \|_{\ell^p},
\end{align}
then there exists a $C' > 0$, independent of $q$, such that for all $q \in [1,\infty]$ and all $c \in \ell^q (\Gamma)$,
\[
C' \|Pc \|_{\ell^q} \leq \|A P c \|_{\ell^q}.
\]
\end{theorem}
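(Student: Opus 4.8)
The plan is to reduce the statement to Proposition~\ref{prop:sjostrand-wienertype} by replacing $A$ with a single block operator that is bounded below on all of $\ell^p(\Gamma)$, rather than merely on $\ran(P)$. Set $Q := I - P$, which is again idempotent, and note that both $P$ and $Q$ are bounded on every $\ell^q(\Gamma)$ by the Schur test, since their entries are dominated by $\Theta_2 \in \WRstw \hookrightarrow \WL$ and $\Gamma$ is relatively separated. First I would introduce the operator
\[
B := \begin{pmatrix} AP \\ Q \end{pmatrix} : \ell^q(\Gamma) \longrightarrow \ell^q(\Lambda) \oplus \ell^q(\Gamma) = \ell^q(\Lambda \sqcup \Gamma),
\]
where $\Lambda \sqcup \Gamma$ is regarded as a relatively separated subset of $G$ via the $G$-coordinates of its two sheets. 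The identity driving the argument is that, for $c \in \ran(P)$ (equivalently $Pc = c$), the second block vanishes and $BPc = (APc, 0)$, so $\|BPc\|_{\ell^q} = \|APc\|_{\ell^q}$ for every $q$.

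Next I would verify that $B$ admits a $\WRstw$-envelope so that Proposition~\ref{prop:sjostrand-wienertype} applies. For the $Q$-block one writes $|Q_{\gamma,\gamma'}| \leq \delta_{\gamma,\gamma'} + \Theta_2(\gamma^{-1}\gamma')$ and dominates the diagonal by a compactly supported continuous bump $\Theta_0 \in \WRstw$ with $\Theta_0(e) \geq 1$, giving the envelope $\Theta_0 + \Theta_2$. For the composition $AP$ one has
\[
|(AP)_{\lambda,\gamma'}| \leq \sum_{\gamma \in \Gamma} \Theta_1(\lambda^{-1}\gamma)\,\Theta_2(\gamma^{-1}\gamma'),
\]
and setting $y = \lambda$, $x = \lambda^{-1}\gamma'$ this sum equals $\sum_{\gamma} \Theta_1^{\vee}(\gamma^{-1}y)\,\Theta_2(\gamma^{-1}yx)$; since $\Theta_1^{\vee} \in \WRstw$ (by unimodularity and symmetry of $w_\alpha$), Lemma~\ref{lem:aux_lemma} yields a function $H \in \WRstw$ with $|(AP)_{\lambda,\gamma'}| \leq H(\lambda^{-1}\gamma')$. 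Thus $\Theta := \max\{H, \Theta_0 + \Theta_2\} \in \WRstw$ is an envelope for $B$; as $\alpha \geq Q+1 \geq Q$, this is precisely the regularity demanded by Proposition~\ref{prop:sjostrand-wienertype}.

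Finally I would check the lower bound on all of $\ell^p$ and conclude. Using $c = Pc + Qc$ and the hypothesis \eqref{eq:wiener-type_somep},
\[
\|Bc\|_{\ell^p} \asymp \|APc\|_{\ell^p} + \|Qc\|_{\ell^p} \gtrsim \|Pc\|_{\ell^p} + \|Qc\|_{\ell^p} \geq \|c\|_{\ell^p},
\]
so $B$ is bounded below on $\ell^p(\Gamma)$. Proposition~\ref{prop:sjostrand-wienertype} then furnishes a constant $C' > 0$, independent of $q$, with $C'\|d\|_{\ell^q} \leq \|Bd\|_{\ell^q}$ for all $q \in [1,\infty]$ and all $d \in \ell^q(\Gamma)$. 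Taking $d = Pc$ and invoking $P^2 = P$ gives $\|BPc\|_{\ell^q} = \|APc\|_{\ell^q}$, whence $C'\|Pc\|_{\ell^q} \leq \|APc\|_{\ell^q}$, as claimed. The main obstacle I anticipate is the second paragraph: controlling the envelope of the composed matrix $AP$ in the strong two-sided amalgam space (where Lemma~\ref{lem:aux_lemma} is indispensable), together with the bookkeeping needed to treat the disjoint union $\Lambda \sqcup \Gamma$ as a single relatively separated index set so that Proposition~\ref{prop:sjostrand-wienertype} may be quoted verbatim.
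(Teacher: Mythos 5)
Your outline coincides with the paper's proof in every step but one: the paper stacks $AP$ and $I-P$ into the same block operator, obtains the envelope for $AP$ from Lemma~\ref{lem:aux_lemma} and for $I-P$ trivially, verifies the $p$-lower bound from $\|c\|_{\ell^p} \leq \|Pc\|_{\ell^p} + \|(I-P)c\|_{\ell^p}$, and concludes via $(I-P)P = 0$ exactly as you do. The genuine gap is the step you defer as ``bookkeeping'': realizing the row index set $\Lambda \sqcup \Gamma$ of the stacked operator as a relatively separated subset of a homogeneous group so that Proposition~\ref{prop:sjostrand-wienertype} applies. Your proposal to regard $\Lambda \sqcup \Gamma$ ``as a relatively separated subset of $G$ via the $G$-coordinates of its two sheets'' fails whenever $\Lambda \cap \Gamma \neq \emptyset$: a matrix indexed by a \emph{subset} of $G$ cannot carry two distinct rows (one from the $AP$-block, one from the $(I-P)$-block) at the same point of $G$, and in the intended application (the proof of Theorem~\ref{thm:frame_Riesz_universal}, where $\Gamma$ indexes a reference frame and $\Lambda$ is the given index set) there is no reason for the two sets to be disjoint. (There is also a notational clash: you write $Q$ both for $I-P$ and for the homogeneous dimension.)

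The paper closes this gap with the one device your argument is missing: it embeds the two sheets at different heights in the homogeneous group $G \times \mathbb{R}$, setting $\Lambda^* = \Lambda \times \{0\}$ and $\Gamma^* = \Gamma \times \{1\}$, which are disjoint by construction, and extends the envelope by a compactly supported bump $\eta$ in the $t$-variable via $\widetilde{\Theta}(x,t) = \Theta'(x)\,\eta(t)$. This is also exactly why the theorem hypothesizes $\alpha \geq Q+1$ rather than $\alpha \geq Q$: the group $G \times \mathbb{R}$ has homogeneous dimension $Q+1$, and Proposition~\ref{prop:sjostrand-wienertype} applied on that group needs the weight exponent to be at least its homogeneous dimension. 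Your remark that ``$\alpha \geq Q+1 \geq Q$ is precisely the regularity demanded'' is a symptom of the missing idea: if the disjoint-union issue could really be handled inside $G$, the weaker hypothesis $\alpha \geq Q$ would have sufficed, and it is not what the theorem assumes. An alternative repair would be to re-prove Proposition~\ref{prop:sjostrand-wienertype} for relatively separated \emph{labeled families} (points of $G$ counted with multiplicity) rather than subsets, which would indeed only need $\alpha \geq Q$; but that amounts to redoing the proof of the proposition, not quoting it verbatim as your plan requires.
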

\begin{proof}
Consider $\widetilde{A} : \ell^2 (\Gamma) \to \ell^2 (\Lambda) \oplus \ell^2 (\Gamma),
\;  c \mapsto ((AP)c, (I-P)c).$ Then the operators $AP$ and $I-P$ 
defining $\widetilde{A}$ satisfy envelope conditions
\begin{align} \label{eq:AP_localization}
|(AP)_{\lambda, \gamma}| \leq \Theta'_1 (\lambda^{-1} \gamma), \quad  \lambda \in \Lambda, \gamma \in \Gamma
\end{align}
and 
\begin{align} \label{eq:IP_localization}
|(I-P)_{\gamma, \gamma'}| \leq \Theta'_2 (\gamma^{-1} \gamma'), \quad \gamma, \gamma' \in \Gamma
\end{align}
for some $\Theta_1', \Theta_2' \in \WRstw(G)$. 
The envelope condition \eqref{eq:AP_localization} follows by Lemma \ref{lem:aux_lemma}, 
whereas \eqref{eq:IP_localization} is immediate. 

We will show that \eqref{eq:wiener-type_somep} implies that 
$\widetilde{A}$ is $q$-bounded below for all $q \in [1,\infty]$. 
For this, we construct an auxillary operator $B$ to which Proposition 
\ref{prop:sjostrand-wienertype} applies. 
For this, note that $G \times \mathbb{R}$ is a homogeneous of homogeneous dimension $Q+1$,
 when equipped 
with the canonical dilations. 
Define the relatively separated sets
$\Lambda^* := \Lambda \times \{0\}$ and
$\Gamma^* := \Gamma \times \{1\}$ in 
$G \times \mathbb{R}$, 
and set $\Omega^* := \Lambda^* \cup \Gamma^*$. 
Then the operator $\widetilde{A}$ can be identified with the operator 
$B : \ell^2 (\Gamma^*) \to \ell^2 (\Omega^*)$ with entries defined by
$B_{(\lambda, 0), (\gamma, 1)} := (AP)_{\lambda, \gamma}$ and 
$B_{(\gamma, 1), (\gamma', 1)} := (I-P)_{\gamma, \gamma'}$. 
To see that $B$ possesses an envelope, take a bump function $\eta \in C_c^{\infty} (\mathbb{R})$ 
with $\supp \eta \subseteq [-2,2]$ and $\eta \equiv 1$ on $[-1,1]$, and define $\Theta' = \Theta_1' + \Theta_2'$ and $\widetilde{\Theta} (x,t) := \Theta'(x) \eta(t)$. Then
$\widetilde{\Theta} \in W(L^{\infty}, L^1_{\widetilde{w_{\alpha}}})(G \times \mathbb{R})$,
with $\widetilde{w_{\alpha}} : G \times \mathbb{R} \to \mathbb{R}^+, \; (1+|x|_G + |t|)^{\alpha}$.
The estimates \eqref{eq:AP_localization} and \eqref{eq:IP_localization} entail that
\[
|B_{(\lambda, 0), (\gamma, 1)}| \leq \Theta'(\lambda^{-1} \gamma) 
= \Theta'(\lambda^{-1} \gamma) \eta(1-0) = \widetilde{\Theta}((\lambda, 0)^{-1} (\gamma, 1))
\]
and
\[
|B_{(\gamma, 1), (\gamma', 1)} |\leq \Theta'(\gamma^{-1} \gamma') 
= \Theta'(\gamma^{-1} \gamma')  \eta(1-1) = \widetilde{\Theta} ((\gamma, 1)^{-1} (\gamma', 1))
\]
respectively. Thus, if \eqref{eq:wiener-type_somep} holds, then, for all $c \in \ell^p (\Gamma)$, we have
\[
\| \widetilde{A} c \|_{\ell^p \oplus \ell^p} 
= \| AP c \|_{\ell^p} + \| (I-P) c \|_{\ell^p} 
\geq C \| Pc\|_{\ell^p} + \|(I-P)c \|_{\ell^p} 
\gtrsim \| c \|_{\ell^p},
\]
where the last step follows from the equivalence $\| c \|_{\ell^p} \asymp \| P c \|_{\ell^p} + \|(I-P) c \|_{\ell^p}$. 
Thus $B$ is $p$-bounded from below. 
By Proposition \ref{prop:sjostrand-wienertype}, the matrices
 $B$ and  $\widetilde{A}$ are $q$-bounded from below on 
$\ell^2 (\Gamma^*)$ and $\ell^q (\Gamma)$, respectively, for all $q \in [1,\infty]$. 
Since 
\[ 
\| \widetilde{A} c \|_{\ell^q \oplus \ell^q} = \| A P c \|_{\ell^q} + \| (I - P) c \|_{\ell^q} \gtrsim \| c \|_{\ell^q}
\]
 for $c \in \ell^q (\Gamma)$, it follows that
$
\| A P c \|_{\ell^q} = \| \widetilde{A} P c \|_{\ell^q \oplus \ell^q} \geq C' \| P c \|_{\ell^q} 
$
for all $c \in \ell^q (\Gamma)$, which completes the proof. 
\end{proof}

\subsection{Proof of Theorem \ref{thm:frame_Riesz_universal}}
We apply Theorem \ref{thm:wiener-type-extension}. 
For this, let $\pi (\Gamma) h$ and $\{\widetilde{h}_{\gamma} \}_{\gamma \in \Gamma}$ 
be canonical dual frames as guaranteed by Proposition \ref{prop:canonical_dual}. 
Define the operators 
$A := C_{g, \Lambda} C^*_{h, \Gamma} : \ell^p (\Gamma) \to \ell^p (\Lambda)$ 
and $P := C_{\widetilde{H}} C^*_{h, \Gamma} : \ell^p (\Gamma) \to \ell^p (\Gamma)$,
where $C_{\widetilde{H}}$ is the coefficient operator of $\{\widetilde{h}_{\gamma} \}_{\gamma \in \Gamma}$. 
By the orthogonality relations \eqref{ORs}, the matrices representing $A$ and $P$ are easily seen to satisfy 
$|A_{\lambda, \gamma} | = |\langle \pi (\gamma) h, \pi(\lambda) g \rangle | \leq d_{\pi}^{1/2} (|V_g h| \ast |V_h h|) (\lambda^{-1} \gamma)$
respectively $|P_{\gamma, \gamma'}| = | \langle \pi(\gamma') h, \widetilde{h}_{\gamma} \rangle | \leq d_{\pi}^{1/2} (\widetilde{\Theta} \ast |V_h h|)(\gamma^{-1} \gamma') $, 
where $\widetilde{\Theta} \in \WRstw(G)$ is as in \eqref{eq:canonical_dual_localized}. 
Since $|V_g h| \ast |V_h h| \in \mathcal{S} (G) \subseteq \WRstw(G)$ 
and 
\[
\widetilde{\Theta} \ast |V_h h| \in W_R(L^{\infty}, L^1_{w_{\alpha}}) (G) \ast W(L^{\infty}, L_{w_{\alpha}}^1)(G) \hookrightarrow \WRstw(G)
\] for any $\alpha \geq Q+1$, the hypotheses of Theorem \ref{thm:wiener-type-extension} 
are satisfied.

(i) Suppose $\CS$ forms a $p$-frame for $\CoLp$ for some $p \in [1,\infty]$. 
For every $f \in \CoLp$, there exists a $c \in \ran (C_{\widetilde{H}})$ such that $f = C^*_{h, \Gamma} c$. 
Therefore, for any $c \in \ell^p (\Gamma)$, we have
\begin{align*}
\| A P c\|_{\ell^p} = \| C_{g, \Lambda} f \|_{\ell^p} \asymp \| f \|_{\Co (L^p(G))} 
\asymp \| C_{\widetilde{H}} f \|_{\ell^p} = \|C_{\widetilde{H}} C^*_{h, \Gamma} c \|_{\ell^p}
= \| P c \|_{\ell^p}.
\end{align*}
An application of Theorem \ref{thm:wiener-type-extension} therefore gives 
$\| A P c \|_{\ell^q} \gtrsim \| P c \|_{\ell^q}$ for all $q \in [1,\infty]$. 

To show that the system $\CS$ forms a $q$-frame for $\Co(L^q(G))$, let $f \in \Co(L^q(G))$ 
and $c := C_{\widetilde{H}} f \in \ran(C_{\widetilde{H}})$ such that $f = C^*_{h, \Gamma} c$. 
Then 
\[
\| C_{g, \Lambda} f \|_{\ell^q} 
= \| A P c \|_{\ell^q} \gtrsim \| P c \|_{\ell^q} 
= \|C_{\widetilde{H}} f \|_{\ell^q} \asymp \| f \|_{\Co(L^q(G))},
\]  
which proves (i). 

(ii) Suppose that $\| C^*_{g, \Lambda} c \|_{\CoLp} \asymp \| c \|_{\ell^p}$ 
for all $c \in \ell^p (\Lambda)$. Then $A^* = C_{h, \Gamma} C^*_{g, \Lambda}$ 
is bounded from below on all of $\ell^p (\Lambda)$. 
By Theorem \ref{thm:wiener-type-extension}, it follows then that 
$A^* = C_{h, \Gamma} C^*_{g, \Lambda}$ is bounded from below 
on all of $\ell^q (\Lambda)$ for any $q \in [1,\infty]$, 
and hence so is $C_{g, \Lambda}^* : \ell^q (\Lambda) \to \Co (L^q(G))$. 
\qed

\end{document}